\def\rr{{\mathbb R}}
\newcommand{\R}{\mathbb{R}}
\newcommand{\M}{\mathcal{M}}
\def\cm{{\mathcal M}}
\def\fz{\infty}
\def\lz{\lambda}
\def\ls{\lesssim}
\def\gs{\gtrsim}
\def\tbz{{\triangle_\lz}}
\def\dmz{{dm_\lz}}
\def\lpz{{L^p(\rr_+,\, dm_\lz)}}
\def\r{\right}
\def\lf{\left}
\newtheorem{thm}{Theorem}[section]
\newtheorem{lem}[thm]{Lemma}%[section]
\newtheorem{prop}[thm]{Proposition}%[section]
\newtheorem{rem}[thm]{Remark}%[section]
\newtheorem{cor}[thm]{Corollary}%[section]
\newtheorem{defn}[thm]{Definition}%[section]
\numberwithin{equation}{section}
\begin{document}

\arraycolsep=1pt

\title{\Large\bf  Muckenhoupt-type weights and the intrinsic structure \\ in Bessel Setting}
\author{Ji Li, Chong-Wei Liang, Fred Yu-Hsiang Lin, and Chun-Yen Shen}

%\medskip
\date{}
\maketitle

\begin{center}
\begin{minipage}{13.5cm}\small

{\noindent  {\bf Abstract:}\  Fix $\lambda>-1/2$ and $\lambda \not=0$. Consider  
the Bessel operator (introduced by Muckenhoupt--Stein)
$\triangle_\lambda:=-\frac{d^2}{dx^2}-\frac{2\lambda}{x}
\frac d{dx}$ on $\mathbb{R_+}:=(0,\infty)$ with $dm_\lambda(x):=x^{2\lambda}dx$ and $dx$ the Lebesgue measure on $\R_+$.  In this paper, we study the Muckenhoupt-type weights which reveal the intrinsic structure in this Bessel setting along the line of Muckenhoupt--Stein and Andersen--Kerman. Besides, exploiting more properties of the weights $A_{p,\lambda}$ introduced by Andersen--Kerman, we introduce a new class $\widetilde{A}_{p,\lambda}$ such that the Hardy--Littlewood maximal function is bounded on the weighted $L^p_w$ space if and only if $w$ is in $\widetilde A_{p,\lambda}$. Moreover, along the line of Coifman--Rochberg--Weiss, we investigate the commutator $[b,R_\lambda]$ with $R_\lambda:=\frac{d}{dx}(\triangle_\lambda)^{-\frac{1}{2}}$ to be the Bessel Riesz transform. We show that for $w\in A_{p,\lambda}$, the commutator $[b, R_\lambda]$ is bounded on weighted $L^p_w$ if and only if $b$ is in the BMO space associated with $\triangle_\lambda$.

}

\end{minipage}
\end{center}

%\bigskip
%\tableofcontents
%
%
%\bigskip
\footnotetext { Keywords: Bessel operator; Bessel Riesz transform; Muckenhoupt-type weights.}
\footnotetext{{Mathematics Subject Classification 2010:} {42B30, 42B20, 42B35}}

\section{Introduction and Statement of Main Results}\label{s1}

\subsection{Background}
In 1965, Muckenhoupt and Stein in \cite{muckenhoupt1965classical} introduced the harmonic function theory associated with the Bessel operator $\tbz$, defined by,
\begin{equation*}
\tbz :=-\frac{d^2}{dx^2}-\frac{2\lz}{x}\frac{d}{dx},\quad \lz>0.
\end{equation*}
They developed a theory in the setting of
$\tbz$ which parallels the classical one associated with the usual Laplacian $\triangle$.  Results on the $\lpz$-boundedness of conjugate
functions and fractional integrals associated with $\tbz$ were
obtained, where $p\in[1, \fz)$, $\rr_+:=(0, \fz)$ and $\dmz(x):= x^{2\lz}\,dx$.  Moreover, they also studied the particular Calder\'on--Zygmund operator associated with $\tbz$: the Bessel Riesz transform $R_\lambda:=\frac{d}{dx}(\triangle_\lambda)^{-\frac{1}{2}}$.
Since then, the Bessel context has been extensively studied, including the theory of Hardy and BMO spaces, Bessel Riesz transforms and the commutator estimates;
see for example, \cite{andersen1981weighted}, \cite{betancor2010maximal}, \cite{betancor2007riesz}, \cite{betancor2009littlewood}, \cite{betancor2010mapping}, \cite{kerman1978boundedness}, \cite{villani2008riesz}, \cite{yang2011real}, \cite{duong2017factorization}, \cite{duong2018compactness}, \cite{betancor2015umd}, \cite{betancor2020bellman}, \cite{chen2022square} and the references therein.

It is worth pointing out that in many subsequent works, the triple $(\R_+,|\cdot|, \dmz)$ is treated as a space of homogeneous type in the sense of Coifman and Weiss \cite{coifman1977extensions}. Together with the particular properties associated with the  Bessel operator $\tbz$ and new techniques,  classical results on $\R^d$ have their extensions in the Bessel setting. 

However, in some particular cases, this point of view may not reflect the intrinsic structure of the Bessel operator $\tbz$. For example, when we treat $(\R_+,|\cdot|, \dmz)$ as a space of homogeneous type, the Muckenhoupt $A_p$ weight is defined naturally following the classical setting, and the weighted $L^p$ norm is 
\begin{align}\label{classical Lpw}
 \|f\|_{L^p_w(\R_+)}:=\bigg( \int_{\R_+} |f(x)|^p w(x) \dmz(x)\bigg)^{\frac{1}{p}},\quad 1<p<\infty,\quad w\in A_p.  
\end{align}

This is general enough to cover many more settings but does not fully capture the specific intrinsic structure of the Bessel Riesz transform. 

In $1981$, Andersen and Kerman \cite{andersen1981weighted}  introduced a new Muckenhoupt-type weights $A_{p,\lambda}, \ 1<p<\infty,$ associated with the Bessel operator $\tbz$, defined as follows:
 
 $w\in A_{p,\lambda}$ if there exists a constant $C > 0$, such that for all $0<a<b<\infty$, we have
\begin{align}\label{Ap lambda}
\bigg(\int_{a}^{b}t^{p}w(t)dt\bigg)\bigg(\int_{a}^{b}t^{2\lambda p'}w(t)^{-\frac{1}{p-1}}dt\bigg)^{p-1}<C(b^{2(\lambda+1)}-a^{2(\lambda +1)})^{p}.
\end{align}
\noindent For  $w\in A_{p,\lambda}$, the $L^p(\R_+,w)$ norm is defined as follows
\begin{align}\label{new Lpw}
\|f\|_{L^p(\R_+,w)}:=\bigg( \int_{\R_+} |f(x)|^p w(x) dx\bigg)^{\frac{1}{p}}.
\end{align}

Then they revealed the intrinsic structure of the Bessel Riesz transform $R_\lambda$ by proving that: for $\lambda>-{\frac{1}{2}},\neq 0 $, $1<p<\infty$, and a nonnegative measurable function $w$,
$R_\lambda$ is bounded on $L^p(\R_+,w)$ {\bf if and only if } $w\in A_{p,\lambda}$.

\subsection{Aims and Questions}

The aim of this paper is to continue the line of Muckenhoupt--Stein  \cite{muckenhoupt1965classical} and Andersen--Kerman \cite{andersen1981weighted} to study the Muckenhoupt-type weights in the Bessel Setting. Analogous to the classical Muckenhoupt $A_p$ theory, the characterization of Muckenhoupt-type weights via suitable maximal function is the first main concern.  Thus, the first natural question that we focus on is as follows.

\medskip
\noindent{\bf Question 1:} What is the suitable Hardy--Littlewood maximal function to characterize the Muckenhoupt-type weights in the Bessel Setting?
\medskip

Next, we would like to mention that in a recent result by Duong, Wick, Yang ,and the first author \cite{duong2017factorization}, the $\lpz$-boundedness of the commutator
$[b, R_\lambda]$ was fully characterized via using the standard BMO space on the triple $(\R_+,|\cdot|, \dmz)$. This generalized the renowned result of Coifman--Rochberg--Weiss \cite{coifman1976factorization} and bypassed the use of classical Fourier transform as it is not applicable in the Bessel setting. Moreover, the Bloom type two weight commutator was also studied very recently in \cite{duong2021two}, via treating $(\R_+,|\cdot|, \dmz)$ as a space of homogeneous type again and using the weighted $L^p$ as in \eqref{classical Lpw}.
Thus, along the line of Coifman--Rochberg--Weiss \cite{coifman1976factorization} and the recent progression, the second natural question that we focus is on the weighted $L^p$ estimate (via \eqref{new Lpw}) of the commutator 
$[b, R_\lambda]$ under this new Muckenhoupt-type weights in the Bessel Setting.

\medskip
\noindent {\bf Question 2:}
Can we fully characterise the $L^p(\R_+,w)$-boundedness (via \eqref{new Lpw}) of the commutator $[b, R_\lambda]$ for $w\in A_{p,\lambda}$? That is, for $p\in (1,\infty)$ and for a suitable $\rm{BMO}$ space associated with $\tbz$,
$$
\|[b,R_\lambda]\|_{L^p(\R_+,w)\to L^p(\R_+,w)} \sim \|b\|_{\rm BMO}\quad ?
$$

\medskip
In this paper, we give full-fledged answers to these questions, respectively.

%$L^p(\R_+,w)$-boundedness for the Bessel Riesz transform $R_\lambda:=\frac{d}{dx}(\triangle_\lambda)^{\frac{-1}{2}}$ whenever $w\in A_{p,\lambda}$, where $p\in (1,\infty)$. 
%They use the kernel estimate to control the Bessel Riesz transform and their tool depend on the classical weighted theorem. To be more specific, they apply the result $\cite{Muck},\cite{CF}$ that for the Hardy-Littlewood maximal function $\cm$ and the Calder$\acute{o}$n-Zygmund Operator $T$ , one has
%\[
%\begin{cases}
%L^1(w) \xrightarrow{\quad \cm, T \quad} L^{1,\infty}(w) \Longleftrightarrow w\in A_1\\
%L^p(w) \xrightarrow{\quad \cm, T \quad} L^p(w) \Longleftrightarrow w\in A_p, \forall p\in(1,\infty),
%\end{cases}
%\]
%In the classical development, the characterization of the weighted estimate on maximal function is obtained firstly, and then the Hilbert transform, the Riesz transform, and finally the Calder$\acute{o}$n-Zygmund Operator. Philosophically, the $A_p$ class fully characterize the $L^p(\R^d,w)$-boundedness of the positive operator $\cm$ and the singular integral operator $T$ owing to the size control of the standard kernel, that is 
%$$
%|K(x,y)|\sim\frac{1}{\|x-y\|^d}
%$$

%One of the question we might ask is what is the corresponding maximal function under this setting. 

\subsection{Statement of main results}

Our first observation on the inner structure of the Anderson--Kerman weight class is as follows.
Regarding the definition of $A_{p,\lambda}$ as in \eqref{Ap lambda} (especially from the right-hand side of \eqref{Ap lambda}), we see that
the underlying measure is in fact $d\nu_{\lambda}(x):=x^{2\lambda +1}dx$. Hence, a natural definition of maximal function is associated with $d\nu_{\lambda}(x)$ but not the usual one associated with $dm_{\lambda}(x)= x^{2\lambda}dx$. That is,
\begin{defn}[$\lambda$-Maximal function]\label{defmax}
\[
\mathcal{M}_{\lambda}(x):=\sup_{B\ni x}\frac{1}{\nu_{\lambda}(B)}\int_{B}|f(y)|d\nu_{\lambda}(y).
\]
\end{defn}
To address {\bf Question 1}, we may ask whether %$A_{p,\lambda}$ characterizes the strong bound of
$\mathcal{M}_{\lambda}$ is bounded on $L^p(\R_+,w)$ {\bf if and only if } $w\in A_{p,\lambda}$.

Surprisingly, it turns out that the right class for the maximal function $M_{\lambda}$ to be bounded on $L^p(\R_+,w)$ 
%is not $A_{p,\lambda}$. Actually, the right class is 
is another $\widetilde{A}_{p,\lambda}$ defined as below. 
 \begin{defn}\label{defaplt}
  For a non-negative measurable function $w$, $w\in \widetilde{A}_{p,\lambda}$ if there exists a constant $C>0$ such that for all intervals $B\subset \R_+$ we have
  $$
\bigg(\frac{1}{\nu_\lambda(B)}\int_B w(t)dt\bigg)\bigg(\frac{1}{\nu_\lambda(B)}\int_B t^{
(2\lambda+1) p'}w(t)^{-\frac{1}{p-1}}dt\bigg)^{p-1}<C.
$$
 \end{defn}
 
 %\bigskip
 We now give a positive answer to {\bf Question 1}.

%\begin{thm}[$L^{p,\infty}(\R_+,w)$-Boundedness for $M_\lambda$]\label{thm1.3}\label{thmwbm}~\\
%Let $1<p<\infty$, and $f\in L^p(\mathbb{R_+},w)$,
%$$
%w \in \widetilde{A}_{p,\lambda}
%\Longleftrightarrow
%\|M_\lambda f\|_{L^{p,\infty}(\R_+,w)} \lesssim \|f\|_{L^p(\R_+,w)}
%$$
 %\end{thm}
 
 \begin{thm}[$L^p(\R_+,w)$-Boundedness for $\mathcal{M}_\lambda$]\label{thmsbm}~\\
 Let $w$ be a non-negative measurable function. Then for all $1<p<\infty$
$$
w\in \widetilde{A}_{p,\lambda} \Longleftrightarrow
\|\mathcal{M}_{\lambda} f\|_{L^p(\R_+,w)}\lesssim\|f\|_{L^p(\R_+,w)},
$$
\end{thm}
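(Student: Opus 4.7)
The plan is to reduce Theorem \ref{thmsbm} to the classical Muckenhoupt $A_p$ theorem on a space of homogeneous type. The key observation is that $\mathcal M_\lambda$ is precisely the uncentred Hardy--Littlewood maximal operator on the triple $(\R_+,|\cdot|, d\nu_\lambda)$, and this triple is a doubling metric measure space because $d\nu_\lambda(x)=x^{2\lambda+1}dx$ is locally finite and doubling on $\R_+$ (a routine check on intervals of the form $(0,r)$ and $(a,b)$ with $a>0$, using $2\lambda+1>0$). Hence the general Muckenhoupt/Calder\'on--Coifman--Weiss theorem applies in this space: $\mathcal M_\lambda$ is bounded on $L^p(v\, d\nu_\lambda)$ if and only if $v$ is a Muckenhoupt $A_p$ weight relative to $d\nu_\lambda$.

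The first step is to change the reference measure. Set $w_\lambda(x):=w(x)\, x^{-(2\lambda+1)}$, so that $w_\lambda\, d\nu_\lambda= w\, dx$. Then $\|f\|_{L^p(\R_+,w)}=\|f\|_{L^p(w_\lambda\, d\nu_\lambda)}$, and the $L^p(\R_+,w)$-boundedness of $\mathcal M_\lambda$ is equivalent to its boundedness on $L^p(w_\lambda\, d\nu_\lambda)$.

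The second step is to match the two weight conditions. A direct computation shows that $w\in \widetilde A_{p,\lambda}$ if and only if $w_\lambda$ belongs to the classical Muckenhoupt class with respect to $d\nu_\lambda$, i.e.
$$
\sup_{B}\Bigl(\tfrac{1}{\nu_\lambda(B)}\int_B w_\lambda\, d\nu_\lambda\Bigr)\Bigl(\tfrac{1}{\nu_\lambda(B)}\int_B w_\lambda^{-\frac{1}{p-1}}\, d\nu_\lambda\Bigr)^{p-1}<\infty.
$$
The first factor equals $\nu_\lambda(B)^{-1}\int_B w\, dx$ trivially, while for the second factor the powers of $x$ accumulate to $(2\lambda+1)\cdot\tfrac{1}{p-1}+(2\lambda+1)=(2\lambda+1)p'$, matching Definition \ref{defaplt} exactly. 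Combining these two steps with the classical theorem in $(\R_+,|\cdot|, d\nu_\lambda)$ finishes the proof.

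The main obstacle is conceptual rather than technical: one has to recognise that $\widetilde A_{p,\lambda}$ is the \emph{standard} $A_p$ condition on the ``Bessel side'', obtained by absorbing the factor $x^{2\lambda+1}$ into the weight via the substitution $w\leftrightarrow w_\lambda=w/x^{2\lambda+1}$. Once this is identified, the rest is bookkeeping. A minor technicality is the doubling of $d\nu_\lambda$ near the endpoint $0$ of $\R_+$, but since $2\lambda+1>0$ even intervals touching $0$ satisfy $\nu_\lambda(2B)\lesssim \nu_\lambda(B)$ with a constant depending only on $\lambda$, so the homogeneous-space machinery applies without modification.
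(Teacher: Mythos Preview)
Your reduction is correct and matches exactly the paper's own ``key observation'' stated immediately after Theorem \ref{thmsbm}: $w\in\widetilde A_{p,\lambda}$ is equivalent to $w/\nu_\lambda$ lying in the classical $A_p$ class with respect to the doubling measure $\nu_\lambda$, so the theorem reduces to the standard Muckenhoupt characterisation on the homogeneous space $(\R_+,|\cdot|,d\nu_\lambda)$. The only difference is one of presentation: whereas you invoke Calder\'on's result as a black box (which the paper also cites in a remark), the paper opts for a self-contained proof in Sections 5--6, reproducing the classical machinery---weak-type estimate, Calder\'on--Zygmund decomposition, reverse H\"older inequality, and self-improvement $\widetilde A_{p,\lambda}\Rightarrow\widetilde A_{q,\lambda}$ for some $q<p$---directly in this setting.
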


The key to obtaining the above result is the following observation (as well as the related approach and techniques): 
%this is a special case for $A_{p}$ weight on homogeneous space. 
 The following inequality
\begin{align*}
    \int_{\R_+} (\mathcal{M}_{\lambda}f)(t)^{p}w(t)dt&=\int_{\R_+}(\mathcal{M}_{\lambda}f)(t)^{p}\frac{w(t)}{\nu_{\lambda}(t)}d\nu_{\lambda}(t)\\
    &\lesssim \int_{\R_+} |f(t)|^{p}\frac{w(t)}{\nu_{\lambda}(t)}d\nu_{\lambda}(t)
    = \int_{\R_+} |f(t)|^{p}w(t)dt
\end{align*}
holds 
if and only if the weight $\frac{w(t)}{\nu_{\lambda} (t)}$ satisfies the classical $A_{p}$ condition with respect to the doubling measure $\nu_{\lambda}$ (when $\lambda>-1/2$). That is,
 there exists a constant $C>0$ such that for all intervals $B\subset\R_+$
$$
\bigg(\frac{1}{\nu_\lambda(B)}\int_B \frac{w(t)}{\nu_{\lambda(t)}}d\nu_{\lambda}(t)\bigg)\bigg(\frac{1}{\nu_\lambda(B)}\int_B \Big(\frac{w(t)}{\nu_{\lambda} (t)}\Big)^{-\frac{1}{p-1}}d\nu_{\lambda}(t)\bigg)^{p-1}<C.
$$
Notice that
\[
\bigg(\frac{w(t)}{\nu_{\lambda} (t)}\bigg)^{-\frac{1}{p-1}}\cdot \nu_{\lambda}(t)=\nu_{\lambda}(t)^{p'}w(t)^{-\frac{1}{p-1}}=t^{(2\lambda+1)p'}\cdot w(t)^{-\frac{1}{p-1}}.
\]
Equivalently, $w\in \widetilde{A}_{p,\lambda}$.

Now, as a natural and further question, one may ask: Do $A_{p,\lambda}$ and $\widetilde{A}_{p,\lambda}$ enjoy the same characterization for the boundedness of $\mathcal{M}_{\lambda}$? Or are they actually the same class but just have different expressions?

The answer is \textbf{No}, that is, $A_{p,\lambda}$ and $\widetilde{A}_{p,\lambda}$ are different classes of weights. We will also exploit the structures for these two classes, respectively. Roughly speaking, $A_{p,\lambda}$ is the suitable class associated with Bessel Riesz transform $R_\lambda$ as it captures the intrinsic behavior for the kernel of $R_\lambda$, while $\widetilde{A}_{p,\lambda}$ is the natural one associated with the maximal function $\mathcal{M}_\lambda$ since it can be viewed as a special case for the classic $A_{p}$ weight on homogeneous space.

%Also notice that for $\widetilde{A}_{p,\lambda}$ we can realize it as a special case for $A_{p}$ weight on homogeneous space. However, as for $A_{p,\lambda}$, we are not able to do so since we distribute the underlying measure $\nu_{\lambda}$ on both side. 

%This fact makes the weight $A_{p,\lambda}$ quite special and so for we don't have a general theory for it. The three weighted class have the properties below.
\begin{prop}[Structure of $A_{p,\lambda}$]\label{propapl}~\\
Let $\lambda>-\frac{1}{2}$ and $\lambda\not=0$. Then for $w\in A_{p,\lambda}$, we have the following properties\\
$(1)$ $$[w]_{A_{p,\lambda}}:=\sup_{B\subset\R_+}\bigg(\frac{1}{\nu_\lambda (B)}\int_B t^p w(t)dt\bigg)\bigg(\frac{1}{\nu_\lambda (B)}\int_B t^{2\lambda p'} w(t)^{\frac{-1}{p-1}}dt\bigg)^{p-1}\geq 1.$$
$(2)$ Define the following measures 
\begin{align*}
d\mu(t):=t^p w(t)dt \quad\text{and}\quad d\sigma(t):=t^{2\lambda p'}{w(t)}^{\frac{-1}{p-1}}.
\end{align*}
Then both of $d\mu$ and $d\sigma$ are doubling measures on $\R_+$.  \\
$(3)$ {\rm(Dual Factor)}
$$
w\in A_{p,\lambda}\Longleftrightarrow t^{(2\lambda-1)p'} {w(t)}^{\frac{-1}{p-1}}\in A_{p',\lambda}.
$$
$(4)$ {\rm(Nesting Structure)}
\[
\forall \lambda>-\frac{1}{2}, \lambda\not=0, 1\leq p_1<p
_2<\infty \implies A_{p_1,\lambda}\subset A_{p_2,\lambda}, 
\]
and
\[
\forall p\in (1,\infty),\ \lambda_2>\lambda_1>-1/2,\ \lambda_2,\ \lambda_1\neq0 \implies A_{p,\lambda_1}\subset A_{p,\lambda_2}.
\]
\end{prop}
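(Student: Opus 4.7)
The plan is to handle (1) and (3) as direct calculations, use them to establish (2), and then tackle (4) separately. For (1), I would use the pointwise factorization
\[
t^{2\lambda+1}=(t^p w(t))^{1/p}\,(t^{2\lambda p'}w(t)^{-1/(p-1)})^{1/p'},
\]
verified by tracking the exponents of $t$ and $w$ separately (using $1/((p-1)p')=1/p$), and apply H\"older with conjugate pair $(p,p')$ on any interval $B$ to get $\nu_\lambda(B)\le(\int_B t^p w\,dt)^{1/p}(\int_B t^{2\lambda p'}w^{-1/(p-1)}\,dt)^{1/p'}$; raising to the $p$-th power and dividing by $\nu_\lambda(B)^p$ then gives $[w]_{A_{p,\lambda}}\ge 1$. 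For (3), I would plug $v(t):=t^{(2\lambda-1)p'}w(t)^{-1/(p-1)}$ into the $A_{p',\lambda}$ test inequality and simplify via $(p')'=p$, $p'-1=1/(p-1)$, and $p'(p-1)=p$; an elementary rearrangement converts the first factor into $\int_B t^{2\lambda p'}w^{-1/(p-1)}\,dt$ and the second into $(\int_B t^p w\,dt)^{p'-1}$, so raising the inequality to the $(p-1)$-th power recovers exactly the $A_{p,\lambda}$ condition for $w$.

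For (2), I would promote the H\"older bound from (1) to any measurable $E\subset B$: $\nu_\lambda(E)^p\le\mu(E)\sigma(E)^{p-1}$, where $d\mu:=t^p w\,dt$ and $d\sigma:=t^{2\lambda p'}w^{-1/(p-1)}\,dt$. Combining with the $A_{p,\lambda}$ upper bound $\mu(B)\sigma(B)^{p-1}\le[w]_{A_{p,\lambda}}\nu_\lambda(B)^p$ on the full interval and the monotonicity $\sigma(E)\le\sigma(B)$ yields
\[
\frac{\mu(E)}{\mu(B)}\ge\frac{1}{[w]_{A_{p,\lambda}}}\bigg(\frac{\nu_\lambda(E)}{\nu_\lambda(B)}\bigg)^{p},\qquad E\subset B.
\]
Specializing $E=B$ and replacing $B$ by its dilate $2B$, and invoking the doubling of $d\nu_\lambda$ on $\R_+$ (which holds since $2\lambda+1>-1$), one gets $\mu(2B)\lesssim\mu(B)$. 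For $d\sigma$, a quick algebraic check shows $d\sigma=t^{p'}v(t)\,dt$ with $v(t):=t^{(2\lambda-1)p'}w(t)^{-1/(p-1)}$, and $v\in A_{p',\lambda}$ by (3); applying the doubling result just proved to the pair $(v,p')$ in place of $(w,p)$ gives the doubling of $d\sigma$.

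Part (4) splits into two nesting claims. For $A_{p,\lambda_1}\subset A_{p,\lambda_2}$ with $\lambda_2>\lambda_1$, I would use the pointwise bound $t^{2\lambda_2 p'}\le b^{2(\lambda_2-\lambda_1)p'}t^{2\lambda_1 p'}$ on $B=(a,b)$ to trade $\lambda_2$ for $\lambda_1$ in the second factor, apply $A_{p,\lambda_1}$, and verify the elementary inequality $b^{2(\lambda_2-\lambda_1)}\nu_{\lambda_1}(B)\lesssim\nu_{\lambda_2}(B)$, which after the substitution $r=a/b\in[0,1)$ reduces to $1-r^{2\lambda_1+2}\le 1-r^{2\lambda_2+2}$ (true because $r<1$ and $\lambda_1<\lambda_2$). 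For $A_{p_1,\lambda}\subset A_{p_2,\lambda}$ with $p_1<p_2$, set $\alpha:=(p_1-1)/(p_2-1)\in(0,1)$, write $w^{-1/(p_2-1)}=(w^{-1/(p_1-1)})^{\alpha}$, pair it with the appropriate power of $t$, and apply H\"older with exponents $1/\alpha,\,1/(1-\alpha)$ to obtain
\[
\int_B t^{2\lambda p_2'}w^{-1/(p_2-1)}\,dt\le\Big(\int_B t^{2\lambda p_1'}w^{-1/(p_1-1)}\,dt\Big)^{\alpha}\Big(\int_B t^{2\lambda}\,dt\Big)^{1-\alpha};
\]
combined with the pointwise estimate $\int_B t^{p_2}w\,dt\le b^{p_2-p_1}\int_B t^{p_1}w\,dt$ and the $A_{p_1,\lambda}$ hypothesis, the argument reduces to the elementary comparison $b\int_B t^{2\lambda}\,dt\lesssim\nu_\lambda(B)$, again verified by the $r=a/b$ substitution. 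The main obstacle is precisely this coupling: in $A_{p,\lambda}$ both the exponent on $t$ and the exponent on $w$ depend on $p$, so the classical Jensen-based proof of Muckenhoupt nesting (which varies only the exponent on $w$) does not apply verbatim, and the above two-stage H\"older decomposition is what disentangles the two dependencies.
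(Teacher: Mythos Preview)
Your proposal is correct and matches the paper's argument almost step for step: part (1) via the factorization $t^{2\lambda+1}=(t^p w)^{1/p}(t^{2\lambda p'}w^{-1/(p-1)})^{1/p'}$ and H\"older, part (3) by direct substitution, part (2) via the ratio inequality $\mu(E)/\mu(B)\ge [w]_{A_{p,\lambda}}^{-1}(\nu_\lambda(E)/\nu_\lambda(B))^p$ applied to the pair $(B,2B)$, and part (4) by the same two-stage H\"older decomposition and the same elementary $r=a/b$ comparison. One small refinement in your write-up over the paper: you obtain the doubling of $d\sigma$ by observing $d\sigma=t^{p'}v(t)\,dt$ with $v\in A_{p',\lambda}$ from (3) and invoking the $\mu$-doubling already proved, whereas the paper simply asserts the $\sigma$ case is analogous; your route is cleaner. (A minor wording quibble: in (2), ``specializing $E=B$ and replacing $B$ by $2B$'' should be read as applying the ratio inequality with the pair $(E,B)=(B,2B)$, not as two separate substitutions---you may want to rephrase to avoid ambiguity.)
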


%\bigskip
\begin{prop}[Relationships among $A_p$, $A_{p,\lambda}$ and
$\widetilde{A}_{p,\lambda}$ on $\R_+$]\label{proprelap}~\\
Let $w(t):=t^{\alpha}$ on $\R_+$. Then

$(1)$  
$
w\in A_p \Longleftrightarrow -1<\alpha<p-1.
$\\

$(2)$
$
w\in A_{p,\lambda}\Longleftrightarrow -1-p<\alpha<p-1+2\lambda.
$\\

$(3)$
$
w\in\widetilde{A}_{p,\lambda}\Longleftrightarrow -1<\alpha<p-1+(2\lambda+1)p.
$
\end{prop}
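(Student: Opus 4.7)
Since all three weight classes are defined by integral inequalities over intervals in $\mathbb{R}_+$, and the test function $w(t)=t^\alpha$ is a pure monomial, my plan is simply to substitute $w(t)=t^\alpha$ into each of the three defining inequalities, evaluate the resulting power integrals via $\int_a^b t^\gamma\,dt=(b^{\gamma+1}-a^{\gamma+1})/(\gamma+1)$, and read off the admissible range of $\alpha$. The proof then reduces to two clean checks: \emph{integrability at $a=0$}, which forces each exponent appearing inside an integral to exceed $-1$ and produces the claimed interval of $\alpha$; and \emph{scale-matching} of the $b$-exponents on the two sides of the inequality, which turns out to be automatic and guarantees the inequality holds uniformly in $0<a<b<\infty$.

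\textbf{Execution.} For part (1), the classical $A_p$ condition applied to $w(t)=t^\alpha$ produces $\int_a^b t^\alpha\,dt$ and $\int_a^b t^{-\alpha/(p-1)}\,dt$; convergence at $a\to 0^+$ forces $\alpha>-1$ and $\alpha<p-1$, after which the left-hand side of $A_p$ evaluates at $a=0$ to a constant multiple of $b^p$, matching $(b-a)^p$. For part (2), substituting into the Andersen--Kerman definition yields $\int_a^b t^{p+\alpha}\,dt$ and $\int_a^b t^{2\lambda p'-\alpha/(p-1)}\,dt$; integrability at $a=0$ gives the lower bound $\alpha>-1-p$, and the second integral produces the upper bound on $\alpha$. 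The total exponent on the LHS simplifies as
\[
(p+\alpha+1)+(p-1)\bigl(2\lambda p'-\tfrac{\alpha}{p-1}+1\bigr)=2(\lambda+1)p,
\]
which exactly matches the $b$-exponent on the right-hand side $b^{2(\lambda+1)p}$. For part (3), the $\widetilde A_{p,\lambda}$ condition, after using $\nu_\lambda(B)=(b^{2\lambda+2}-a^{2\lambda+2})/(2\lambda+2)$, reduces to demanding that $\int_a^b t^\alpha\,dt$ and $\int_a^b t^{(2\lambda+1)p'-\alpha/(p-1)}\,dt$ be finite as $a\to 0^+$; the integrability conditions rearrange precisely to $-1<\alpha<p-1+(2\lambda+1)p$, and the same exponent arithmetic as in part (2) verifies the normalized quotient stays bounded.

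\textbf{Necessity and main obstacle.} The main subtlety lies in necessity: when $\alpha$ falls outside the claimed interval, one of the inner integrals diverges as $a\to 0^+$ (power-type divergence in the generic case, or logarithmically at the boundary values of $\alpha$) while the right-hand side remains bounded for fixed $b$, producing an immediate contradiction upon letting $a\to 0^+$. The borderline cases where an exponent inside an integral equals exactly $-1$ must be treated separately: they generate a $\log(b/a)$ factor which tends to $\infty$, confirming that the inequalities on $\alpha$ are strict. Beyond this borderline bookkeeping, the argument is direct computation; the essential content of the proposition is simply that, for monomial weights, the three weight conditions decouple into elementary integrability conditions at the origin.
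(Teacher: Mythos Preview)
Your proposal is correct and follows essentially the same approach as the paper: both reduce the weight conditions for $w(t)=t^\alpha$ to integrability of the resulting power functions near the origin, yielding identical exponent constraints $\alpha+p>-1$, $2\lambda p'-\alpha/(p-1)>-1$, etc. The paper organizes this via a two-case split on balls $B(x_0,R)$ (center dominates versus radius dominates), whereas you handle it by letting $a\to 0^+$ in $(a,b)$ and invoking scale-matching of the $b$-exponents; these are equivalent presentations of the same computation, and your exponent arithmetic $(p+\alpha+1)+(p-1)(2\lambda p'-\alpha/(p-1)+1)=2(\lambda+1)p$ is exactly the homogeneity check underlying the paper's ``Case (1)'' where the quotient is $\sim 1$.

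One small point: your claim that ``scale-matching \dots guarantees the inequality holds uniformly in $0<a<b<\infty$'' is slightly elliptical. Scale-matching reduces the problem to a function of $r=a/b\in[0,1)$ alone, but you still need boundedness as $r\to 1^-$ (trivially $\sim 1$ by Taylor expansion) in addition to the $r\to 0^+$ integrability you check. The paper's two-case split is precisely this, so there is no real gap, just a sentence you should make explicit.
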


\medskip

 We now turn to the estimate of the commutator. Given a measurable function $b(x)$, we can define the commutator $[b,R_\lambda]$ of the Bessel Riesz transform by
$$
[b,R_\lambda](f)(x):=b(x)R_\lambda (f)(x)-R_\lambda (bf)(x).
$$
%the commutator $[b,R_\lambda]$.
%, where $b$ is a function in the suitable BMO space, called $BMO_\lambda (\R_+).$

%\\
%In general, given any bounded linear operator $T$ on $L^2(\R^d)$ and $b\in BMO(\R^d)$, the commutator has a trivially upper bound
%$$
%\|[b,T]\|_{L^2(\R^d,dx)\to L^2(\R^d,dx)}\leq 2\cdot \|b\|_{BMO} \|T\|_{L^2(\R^d,dx)\to L^2(\R^d,dx)}
%$$
%Nevertheless, the estimate of the upper become arduous with the underlyspace $\R^d$ equipped with the $A_2$ weight $w(x)dx$. As for the lower bound of the commutator, this form is usually considered
%$$
%[b,T]f(x)=\int_{\R^d}(b(x)-b(y)) K(x,y)f(y)dy,
%$$
%where $T$ is the singular in tegral with the standard $K$. The benefit of this form is that not only the cancellation of $K$ we can use but also the cancellation of $b(x)-b(y)$.
%In the prehistory of the commutator, Coifman, Rochberg, and Weiss shows that the $L^2(\R^d,dx)$-boundedness of the commutator $[b,R_j]$ can be totally distinguished by $\|b\|_{BMO(\R^d)}$, in other words,
%$$
%\|b\|_{BMO(\R^d)}\sim\sum^d_{j
%=1} \|[b,R_j]\|_{L^2\to L^2}
%$$
%
%They use the Fourier series method to get the lower bound of the commutator, $[b,R_j]$, which will not work in the Bessel Riesz case. Later, results on other setting, such as the $A_p$ weighted setting, two weights setting, have been proved $\cite{JiTwo}$. 

The boundedness of commutators has a long history and has many important applications, for instance, the composition of differential operators. It is a well-known fact that for general Calder\'on--Zyumund operators $T$, the boundedness of commutators $[b, T]$ is equivalent to $b \in {\rm BMO}$. Thus we now define the $\rm{BMO}$ functions in our setting which will characterize the boundedness of $[b, R_{\lambda} ]$ in $L^p(\mathbb R_+, w)$ for $ w \in A_{p, \lambda}$.

\begin{defn}[Sharp $\lambda$-maximal function]\label{defsharpm}~\\
Let $w$ be a non-negative, measurable function on $\mathbb{R_+}$, and $f\in L^1_{loc} (\mathbb{R_+},\nu_\lambda)$. Define the sharp $\lambda$-maximal function by
\begin{align*}
    \mathcal{M}^{\#}_\lambda f(x)= \sup_{B\ni x, B\subset \mathbb R_+} \frac{1}{\nu_\lambda(B)}\int_B|f(t)-f_{B,\lambda}|d\nu_\lambda(t),
\end{align*}
where
\begin{align}\label{fBlambda}
f_{B,\lambda}:=  \frac{1}{\nu_\lambda(B)}  \int_B f(t)\, d\nu_\lambda(t),\quad \text{and}  \quad B=(a,b)\subset\R_+.
\end{align}
We say that $f$ has $\lambda$-bounded mean oscillation if $\mathcal{M}^{\#}_\lambda f\in L^\infty.$
\end{defn}

\begin{rem}
Throughout the paper, we refer $\nu_\lambda (t)$ to be $t^{1+2\lambda}$ where $\lambda> \frac{-1}{2}$ and $\lambda\not=0$. 
\end{rem}

\begin{defn}[The space ${\rm BMO}_\lambda (\mathbb{R_+})$]\label{defbmo}~\\
The space of $\lambda$-bounded mean oscillation on $(\mathbb{R_+},d\nu_\lambda)$ is given by
$$
{\rm BMO}_\lambda (\mathbb{R_+}):=\{f\in L^1_{loc} (\mathbb{R_+},d\nu_\lambda):\mathcal{M}^{\#}_\lambda f\in L^\infty\},
$$
equipped with the norm
\begin{align}\label{fB}
\|f\|_{{\rm BMO}_\lambda}:=\|\mathcal{M}^{\#}_\lambda f\|_{L^\infty}.
\end{align}

\end{defn}

We now give a positive answer to {\bf Question 2}.
\begin{thm}\label{thmcom}
Suppose $b\in L^1_{loc} (\mathbb{R_+},d\nu_\lambda)$, $1<p<\infty$ and $w\in A_{p,\lambda}$. 

\noindent If $b\in {\rm BMO}_\lambda(\mathbb{R_+})$, then
$$
\|[b,R_\lambda]\|_{L^p(\mathbb{R_+},w)\to L^p(\mathbb{R_+},w)} \lesssim \|b\|_{BMO_\lambda(\R_+)}.
$$

\noindent Conversely,  if $\|[b,R_\lambda]\|_{L^p(\R_+,w)\to L^p(\R_+,w)}<\infty$, 
then  $b\in {\rm BMO}_\lambda (\R_+)$ with
$$
 \|b\|_{{\rm BMO}_\lambda(\R_+)}\lesssim \|[b,R_\lambda]\|_{L^p(\R_+,w)\to L^p(\R_+,w)}.
$$
\end{thm}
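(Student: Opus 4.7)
The plan for the direction $b \in \mathrm{BMO}_\lambda \Rightarrow [b, R_\lambda]$ bounded is to use the Coifman--Rochberg--Weiss holomorphic conjugation trick combined with the Andersen--Kerman theorem. For $z \in \mathbb{C}$, introduce the holomorphic family
$$T_z f(x) := e^{zb(x)}\, R_\lambda\bigl(e^{-zb}f\bigr)(x),$$
so that $T_0 = R_\lambda$ and $\frac{d}{dz}\big|_{z=0} T_z = [b, R_\lambda]$. The Cauchy integral formula on the circle $|z| = \varepsilon$ gives
$$[b,R_\lambda]f \;=\; \frac{1}{2\pi i}\oint_{|z|=\varepsilon}\frac{T_z f}{z^2}\,dz,$$
hence $\|[b,R_\lambda]f\|_{L^p(\mathbb{R}_+, w)} \leq \varepsilon^{-1} \sup_{|z|=\varepsilon}\|T_z f\|_{L^p(\mathbb{R}_+, w)}$. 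A direct calculation yields
$$\|T_z f\|_{L^p(\mathbb{R}_+, w)} = \bigl\|R_\lambda(e^{-zb}f)\bigr\|_{L^p(\mathbb{R}_+,\, e^{p\,\mathrm{Re}(z)\,b}\,w)},$$
so by Andersen--Kerman it suffices to verify that $e^{p\,\mathrm{Re}(z)\,b}\,w \in A_{p,\lambda}$ with a uniform constant whenever $|z|\leq \varepsilon$. I would extract this perturbation property from a John--Nirenberg inequality for $\mathrm{BMO}_\lambda$, which is available because $(\mathbb{R}_+, d\nu_\lambda)$ is a doubling metric measure space for $\lambda > -1/2$, combined with the doubling of the auxiliary measures $d\mu = t^p w\,dt$ and $d\sigma = t^{2\lambda p'} w^{-1/(p-1)}\,dt$ furnished by Proposition \ref{propapl}. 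Setting $\varepsilon \sim \|b\|_{\mathrm{BMO}_\lambda}^{-1}$ then produces the bound $\lesssim \|b\|_{\mathrm{BMO}_\lambda}$.

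\textbf{Lower bound approach.} For the converse, the plan is a kernel-testing argument in the spirit of Uchiyama. Given an interval $B_1 \subset \mathbb{R}_+$, I would locate a companion interval $B_2$ with $\nu_\lambda(B_2) \sim \nu_\lambda(B_1)$, sufficiently separated from $B_1$, so that the Bessel Riesz kernel $K_\lambda(x, y)$ has a fixed sign and comparable size on $B_1 \times B_2$; such a region can be identified from the explicit integral representation of $K_\lambda$, adapting the unweighted analysis of Duong--Li--Wick--Yang to averages with respect to $d\nu_\lambda$ instead of $dm_\lambda$. Pairing $[b, R_\lambda]\chi_{B_2}$ against a suitable test function supported on $B_1$ and using the sign-definiteness of $K_\lambda$ converts
$$\int_{B_1}\int_{B_2} K_\lambda(x,y)\bigl(b(x) - b(y)\bigr)\,dm_\lambda(y)\,dx$$
into a lower bound on the two-ball oscillation $|b_{B_1,\lambda} - b_{B_2,\lambda}|$. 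A weighted H\"older argument that exploits the $A_{p,\lambda}$-condition on $w$ to pass from the $dm_\lambda$-pairing to the $L^p(\mathbb{R}_+, w)$-norm, followed by a standard telescoping across dyadic scales, then upgrades this to $\|b\|_{\mathrm{BMO}_\lambda} \lesssim \|[b,R_\lambda]\|_{L^p(\mathbb{R}_+, w) \to L^p(\mathbb{R}_+, w)}$.

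\textbf{Main obstacles.} The principal analytic difficulty is the perturbation lemma $e^{sb} w \in A_{p,\lambda}$ for small $|s|$: because $\mathrm{BMO}_\lambda$ is built on $\nu_\lambda$-averages while the $A_{p,\lambda}$ condition couples the two different weighted integrals $\int_B t^p w\,dt$ and $\int_B t^{2\lambda p'} w^{-1/(p-1)}\,dt$, John--Nirenberg control must be transported between both measures simultaneously. For the lower bound, the main challenge is a clean identification of a sign-definite region of $K_\lambda$ at the level of $d\nu_\lambda$-averages; once this is secured, the remaining weighted duality and telescoping steps are essentially formal.
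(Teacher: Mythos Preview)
Your upper bound plan matches the paper's exactly: the Cauchy integral trick reduces matters to showing $e^{p\,\mathrm{Re}(z)b}w \in A_{p,\lambda}$ uniformly for small $|z|$. The paper's mechanism for this, however, is not merely the doubling of $d\mu$ and $d\sigma$ but the \emph{reverse H\"older inequality} for $A_{p,\lambda}$ developed in Section~7: one applies H\"older with exponent $1+\varepsilon_1$ to split $\frac{1}{\nu_\lambda(B)}\int_B t^p w\, e^{p\,\mathrm{Re}(z)b}\,dt$ into a reverse-H\"older-controlled factor $\bigl(\frac{1}{\nu_\lambda(B)}\int_B (t^p w/\nu_\lambda)^{1+\varepsilon_1}\,d\nu_\lambda\bigr)^{1/(1+\varepsilon_1)}$ and a purely exponential factor, the latter bounded by John--Nirenberg (Corollary~\ref{cor exp}). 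The dual factor is handled symmetrically. Doubling alone does not give you this separation, so your sketch should replace ``doubling of $d\mu,\,d\sigma$'' by ``reverse H\"older for $\mu\nu_\lambda^{-1}$ and $\sigma\nu_\lambda^{-1}$ with respect to $d\nu_\lambda$''.

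For the lower bound there is a genuine gap. Sign-definiteness of $K_\lambda$ on $B_1\times B_2$ alone does \emph{not} convert
\[
\int_{B_1}\int_{B_2} K_\lambda(x,y)\bigl(b(x)-b(y)\bigr)\,dm_\lambda(y)\,dx
\]
into a lower bound for anything, because $b(x)-b(y)$ may change sign and the resulting cancellation can make this integral small even when $b$ has large oscillation. The paper handles this with a \emph{median value decomposition} (Lemma~\ref{l-bmo decomp low bdd}): taking the median $\alpha_{\widetilde B}(b)$, one splits $B=E_1\cup E_2$ and $\widetilde B=F_1\cup F_2$ so that on each $E_i\times F_i$ both $K_\lambda$ and $b(x)-b(y)$ have fixed sign and $|b(x)-\alpha_{\widetilde B}(b)|\leq |b(x)-b(y)|$. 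Testing against $f_i=\chi_{F_i}$ then gives directly
\[
\frac{1}{\nu_\lambda(B)}\int_B |b-\alpha_{\widetilde B}(b)|\,d\nu_\lambda \;\lesssim\; \sum_{i=1}^2 \frac{1}{\nu_\lambda(B)}\int_B |[b,R_\lambda]f_i|\,d\nu_\lambda,
\]
and the right side is bounded via H\"older and the $A_{p,\lambda}$ condition on the enlarged interval $B^*\supset B\cup\widetilde B$; no telescoping is needed. Your proposed route through the two-ball difference $|b_{B_1,\lambda}-b_{B_2,\lambda}|$ followed by telescoping would require a more carefully chosen test function (one encoding the sign of $b-c$ for some constant $c$) to defeat the cancellation, and that choice is the missing ingredient in your sketch.
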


It is natural to ask what is the relationship between our $\rm{BMO}_\lambda(\R_+)$ and the well-known ${\rm BMO}_{\Delta_\lambda}(\R_+)$, which was used before
 \cite{duong2017factorization} for unweighted estimate of $[b,R_\lambda]$, defined by $$
{\rm BMO}_{\Delta_\lambda} (\mathbb{R_+}):=\{f\in L^1_{loc} (\mathbb{R_+},dm_\lambda): \|f\|_{{\rm BMO}_{\Delta_\lambda} (\mathbb{R_+})}<\infty\},
$$
equipped with the norm
$$
\|f\|_{{\rm BMO}_{\Delta_\lambda} (\mathbb{R_+})}:=\sup_{B\subset \R_+} \frac{1}{m_\lambda(B)}\int_B|f(x)-f_B|dm_\lambda(x),
$$
where 
$$f_B=\frac{1}{m_\lambda(B)}\int_B f(x)dm_\lambda(x) \quad \text{and}  \quad B=(a,b)\subset\R_+. $$

Indeed, they are equivalent in the norm sense.
\begin{prop}\label{propbmo} Let $\lambda>-1/2$ and $\lambda\neq 0$,
$    {\rm BMO}_{\Delta_\lambda} (\mathbb{R_+})$ coincides with ${\rm BMO}_\lambda(\R_+)$
and they have equivalent norms.
\end{prop}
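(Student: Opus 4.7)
The plan is to prove the two-sided norm comparison $\|f\|_{{\rm BMO}_\lambda} \sim \|f\|_{{\rm BMO}_{\Delta_\lambda}}$ by exploiting the pointwise identity $d\nu_\lambda(t) = t\,dm_\lambda(t)$ and classifying intervals $B=(a,b)\subset\mathbb{R}_+$ by their aspect ratio $b/a$. Since both ${\rm BMO}$ norms take a supremum over the same family of intervals, it suffices to show that for each such $B$ the $\nu_\lambda$-mean oscillation of $f$ on $B$ is comparable to the $m_\lambda$-mean oscillation, with constants independent of $B$. When $a \ge b/2$ (and, more generally, whenever $b/a$ is bounded), the factor $t \sim b$ is essentially constant on $B$, so $d\nu_\lambda \sim b\,dm_\lambda$ and $\nu_\lambda(B) \sim b\,m_\lambda(B)$. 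This forces, for every constant $c$,
\[
\frac{1}{\nu_\lambda(B)}\int_B |f - c|\, d\nu_\lambda \;\sim\; \frac{1}{m_\lambda(B)}\int_B |f - c|\, dm_\lambda,
\]
and taking infima (which are attained, up to a factor $2$, by the respective measure-averages $f_{B,\nu_\lambda}$ and $f_{B,m_\lambda}$) yields the equivalence of the two oscillations on $B$, with constants depending only on $\lambda$ and the aspect ratio.

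The substantive case is $a < b/2$. Here I would dyadically decompose $B = \bigsqcup_{k\ge 0} B_k$ with $B_k := (b/2^{k+1}, b/2^k) \cap B$, each $B_k$ of bounded aspect ratio. Direct integration gives
\[
\frac{\nu_\lambda(B_k)}{\nu_\lambda(B)} \sim 2^{-(2\lambda+2)k}, \qquad \frac{m_\lambda(B_k)}{m_\lambda(B)} \sim 2^{-(2\lambda+1)k},
\]
both geometric since $\lambda > -1/2$; in particular the bulk of either measure sits in the outermost annulus $B_0$. To estimate $\tfrac{1}{\nu_\lambda(B)}\int_B |f - f_{B_0,\nu_\lambda}|\, d\nu_\lambda$, I split the integral over the $B_k$, bound each inner average by $\mathrm{osc}_{\nu_\lambda}(B_k,f) + |f_{B_k,\nu_\lambda} - f_{B_0,\nu_\lambda}|$, apply the bounded-aspect-ratio step to each $B_k$ to get $\mathrm{osc}_{\nu_\lambda}(B_k,f) \lesssim \|f\|_{{\rm BMO}_{\Delta_\lambda}}$, and telescope across the adjacent unions $B_{j-1}\cup B_j \subset (b/2^{j+1}, b/2^{j-1})$ (still of bounded aspect ratio $4$) to obtain $|f_{B_k,\nu_\lambda} - f_{B_0,\nu_\lambda}| \lesssim k\|f\|_{{\rm BMO}_{\Delta_\lambda}}$. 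Summing with the geometric weights produces $\mathrm{osc}_{\nu_\lambda}(B,f) \lesssim \|f\|_{{\rm BMO}_{\Delta_\lambda}}$. The reverse inclusion follows by the same scheme with $m_\lambda$ and $\nu_\lambda$ interchanged, using the convergent series $\sum_{k\ge 0}(k+1)2^{-(2\lambda+1)k}$, where once more $\lambda > -1/2$ is essential.

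The delicate step is the substantive case, where the logarithmic telescoping factor $k$ must be defeated by the geometric decay of $\nu_\lambda(B_k)/\nu_\lambda(B)$ and $m_\lambda(B_k)/m_\lambda(B)$; this is precisely where the standing hypothesis $\lambda > -1/2$ plays a crucial role. Boundary effects (the truncated last annulus $B_K$ when $a>0$) and the bounded-ratio enlargements $B_{j-1}\cup B_j$ used to compare averages on adjacent annuli are routine to handle once the dyadic geometry is in place.
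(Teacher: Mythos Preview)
Your argument is correct and takes a genuinely different route from the paper. The paper observes that $d\nu_\lambda(t) = W(t)\,dm_\lambda(t)$ with $W(t)=t$, checks that $W$ is an $A_2$ weight on the space of homogeneous type $(\mathbb{R}_+,|\cdot|,m_\lambda)$, and then invokes the general machinery: John--Nirenberg for ${\rm BMO}_{\Delta_\lambda}$ combined with the $A_\infty$ comparison $W(E)/W(B)\lesssim (m_\lambda(E)/m_\lambda(B))^\varepsilon$ gives the inclusion ${\rm BMO}_{\Delta_\lambda}\subset {\rm BMO}_\lambda$, while the reverse uses H\"older's inequality, the $A_2$ condition, and John--Nirenberg for ${\rm BMO}_\lambda$. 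Your approach is more elementary and entirely self-contained: by splitting $B=(a,b)$ dyadically into annuli $B_k$ of bounded aspect ratio and telescoping the averages, you bypass both the John--Nirenberg inequality and the $A_p$/$A_\infty$ theory, trading them for a direct geometric-series computation in which the hypothesis $\lambda>-1/2$ appears transparently as the convergence condition $\sum_k (k+1)2^{-(2\lambda+1)k}<\infty$. The paper's proof is more conceptual---it identifies the equivalence as an instance of the standard fact that an $A_\infty$ weight does not change ${\rm BMO}$---and would generalize immediately to other power weights, whereas your argument is more hands-on but requires no external input beyond elementary estimates.
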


\subsection{Organization of the paper}

In Section $2$, we recall some preliminary results, including the relation between $A_{p,\lambda}$ class and the Bessel Riesz transform, the John-Nirenberg inequality in the doubling measure setting. In Sections $3$ and $4$, we demonstrate the inner properties of $A_{p,\lambda}$ and $\widetilde{A}_{p,\lambda}$, respectively, and compare the relation between these two classes and the classical one $A_p$. In Sections $5$ and $6$, we derive some reverse structure of $A_{p, \lambda}$ and $\widetilde{A}_{p,\lambda}$, respectively. And, in Section $7$, we give a comprehensive characterization of the $L^p(\R_+,w)$-boundedness of the commutator $[b,R_\lambda]$.

\bigskip

\section{Preliminaries}

\subsection{$A_{p,\lambda}$ Class}
In \cite{andersen1981weighted}, a class of weight $A_{p,\lambda}$ is discovered by Andersen and Kerman who showed the following Theorem.

\begin{thm}[$L^p(\R_+,w)$-Boundedness for Bessel Riesz Transform $R_\lambda$]~\\ For all $1<p<\infty$, $\lambda>-\frac{1}{2},\neq 0$, the following statemennts are equivalent:\\
$(1).$ $$
w\in A_{p,\lambda}.
$$
$(2).$ $$
\|R_\lambda (f)\|_{L^p (\R_+,w)}\underset{p,\lambda}{\lesssim}\|f\|_{L^p (\R_+,w)}.
$$
$(3).$ $$
\|R_\lambda (f)\|_{L^{p,\infty} (\R_+,w)}\underset{p,\lambda}{\lesssim}\|f\|_{L^p (\R_+,w)}.
$$
\end{thm}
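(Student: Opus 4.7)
The plan is to prove the cycle of implications $(2)\Rightarrow(3)\Rightarrow(1)\Rightarrow(2)$. The first implication is immediate by Chebyshev's inequality, so the real content is in the testing step $(3)\Rightarrow(1)$ and in the necessary sufficiency step $(1)\Rightarrow(2)$, the latter of which I would reduce to weighted Hardy-type inequalities on $\mathbb{R}_+$.

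For $(3)\Rightarrow(1)$, I would fix an interval $B=(a,b)\subset\mathbb{R}_+$ and test the weak-type inequality against the natural extremiser $f(t):=t^{2\lambda p'}w(t)^{-1/(p-1)}\chi_B(t)$. Using the known pointwise behaviour of the kernel $K_\lambda(x,y)$ of $R_\lambda$, which (for $y\in B$ and $x$ sufficiently far to the right of $b$) satisfies a lower bound of the schematic form $|K_\lambda(x,y)|\gtrsim (b^{2\lambda+2}-a^{2\lambda+2})^{-1}$ on a set of comparable $\nu_\lambda$-measure, I obtain a uniform lower bound for $|R_\lambda f(x)|$ on a suitable level set. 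Inserting this into the weak-type bound $w(\{|R_\lambda f|>\alpha\})\lesssim \alpha^{-p}\|f\|_{L^p(\R_+,w)}^p$ with $\alpha$ comparable to the lower bound, and then simplifying using $w(t)f(t)=t^{2\lambda p'}w(t)^{-1/(p-1)}$, the inequality exactly rearranges into
\[
\Bigl(\int_a^b t^p w(t)\,dt\Bigr)\Bigl(\int_a^b t^{2\lambda p'}w(t)^{-1/(p-1)}\,dt\Bigr)^{p-1}\lesssim \bigl(b^{2\lambda+2}-a^{2\lambda+2}\bigr)^p,
\]
which is exactly $w\in A_{p,\lambda}$. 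The need to also test with $f=t^{p}w(t)\chi_B$ (or to use the dual operator) will produce the complementary bound so that the $A_{p,\lambda}$ constant is finite.

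For the hard direction $(1)\Rightarrow(2)$, I would exploit the fact that on the half-line the kernel $K_\lambda(x,y)$ is pointwise dominated by a sum of a local Calderón--Zygmund part and two Hardy-type pieces of the form
\[
H_0 f(x):=\frac{1}{x^{2\lambda+2}}\int_0^x y^{2\lambda+1}f(y)\,dy,\qquad H_\infty f(x):=\int_x^\infty \frac{f(y)}{y}\,dy,
\]
(and variants thereof with the Bessel factor $y^{2\lambda}$ absorbed into the measure). For the local part I would use a Calderón--Zygmund decomposition tailored to $d\nu_\lambda$ together with the doubling of $d\mu(t):=t^p w(t)\,dt$ and $d\sigma(t):=t^{2\lambda p'}w(t)^{-1/(p-1)}\,dt$ (which will be established in Proposition~\ref{propapl}) to obtain weak-type $(1,1)$ with respect to $w$, followed by Marcinkiewicz interpolation with the easy $L^\infty$ bound on the truncated kernel. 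For the Hardy pieces I would invoke the classical Muckenhoupt two-weight characterisation of weighted Hardy inequalities: the $L^p(\R_+,w)$-boundedness of $H_0$ and $H_\infty$ is controlled precisely by
\[
\sup_{r>0}\Bigl(\int_r^\infty t^{-p(2\lambda+2)}w(t)\,dt\Bigr)\Bigl(\int_0^r t^{(2\lambda+1)p'}w(t)^{-1/(p-1)}\,dt\Bigr)^{p-1}<\infty
\]
and its dual, both of which follow from testing $A_{p,\lambda}$ on intervals of the form $(0,r)$ and $(r,2r)$ and summing geometrically.

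The main obstacle I expect is matching the Hardy-type tail inequality above with the interval form of $A_{p,\lambda}$: the exponent on $t$ in the weight, the power of $t$ appearing outside (from the kernel $K_\lambda$), and the $(b^{2\lambda+2}-a^{2\lambda+2})^p$ normalisation must all align so that the Muckenhoupt--Bradley criterion for the Hardy operator becomes literally equivalent to $A_{p,\lambda}$. The bookkeeping requires careful use of the identity $\nu_\lambda((a,b))\sim b^{2\lambda+2}-a^{2\lambda+2}$ and a dyadic splitting into intervals $(2^k,2^{k+1})$ to pass from the pointwise Hardy condition to the interval-testing form; carrying this bookkeeping through cleanly, while also handling the endpoint $\lambda=0$ and the sign restriction $\lambda>-1/2$ that guarantees doubling, is where the proof becomes technical.
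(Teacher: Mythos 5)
The paper does not prove this theorem: it is quoted verbatim as a known result of Andersen--Kerman \cite{andersen1981weighted} in the Preliminaries, so there is no in-paper proof to compare against. Your outline does, however, follow the architecture of the original Andersen--Kerman argument: necessity by testing the weak-type bound against $f=t^{2\lambda p'}w^{-1/(p-1)}\chi_B$ together with a kernel lower bound, and sufficiency by splitting the kernel of $R_\lambda$ into a local singular-integral piece plus Hardy-type tails whose weighted bounds are governed by Muckenhoupt--Bradley testing conditions deduced from $A_{p,\lambda}$ by dyadic summation. That skeleton is the right one, and the bookkeeping issues you flag (aligning the powers of $t$ with the normalisation $\nu_\lambda((a,b))\sim b^{2\lambda+2}-a^{2\lambda+2}$) are indeed where the work lies.

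There is one step that would fail as written: for the local part you propose a Calder\'on--Zygmund decomposition giving a weak $(1,1)$ estimate \emph{with respect to $w$}, followed by Marcinkiewicz interpolation with an $L^\infty$ bound. Calder\'on--Zygmund operators are not of weak type $(1,1)$ with respect to a general $A_p$ weight (that endpoint requires $w\in A_1$), and a truncated singular kernel does not give a useful $L^\infty(w)\to L^\infty(w)$ bound either, so this interpolation scheme does not produce the $L^p(\R_+,w)$ estimate. The standard repair --- and what Andersen--Kerman effectively do --- is to observe that on blocks where $t$ is essentially constant (say $b\le 2a$) the condition $A_{p,\lambda}$ is equivalent to the classical one-weight $A_p$ condition (the paper records exactly this in Section 4 as $A^{\text{loc}}_{p,+}=A^{\text{loc}}_{p,\lambda}$), invoke the Hunt--Muckenhoupt--Wheeden theorem for the Hilbert-transform-like local kernel on each block, and sum over the blocks; the off-block interactions are absorbed into the Hardy pieces. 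A second, smaller point: in the necessity step the weak-type inequality gives you control of $w$ on a translated interval $\widetilde B$, not on $B$ itself, so you must either symmetrise (swap the roles of $B$ and $\widetilde B$) or use doubling of $d\mu(t)=t^pw(t)\,dt$ to return to $\int_B t^pw(t)\,dt$; your parenthetical remark about testing with the dual operator gestures at this but should be made explicit. Finally, the ``endpoint $\lambda=0$'' needs no handling, since it is excluded by hypothesis.
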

For the endpoint case, that is $p=1$, they also proved that
$$
w\in A_{1,\lambda}\Longleftrightarrow \|R_\lambda (f)\|_{L^{1,\infty}(\R_+,w)}\underset{\lambda}{\lesssim}\|f\|_{L^{1} (\R_+,w)}.
$$
It is worth pointing out that in their paper, one of the intriguing parts is that in the classical $A_p$ theorem, the $A_1$ class can be obtained from $A_p,\: p>1$ by letting $p \rightarrow 1^+$. However the $A_{1,\lambda}$ case is not the limiting of $A_{p,\lambda}$ as $ p \rightarrow 1^+$.
\bigskip

\subsection{Space of Bounded Mean Oscillation}
For the equivalency of some weighted $\rm{BMO}$ norms, we refer \cite{ho2011characterizations} to the readers. In the following, we first prove the John--Nirenberg inequality in our $\rm{BMO}$ class. The proof is essentially the same as the classical proof. For the sake of completeness of the paper, we give the details here. 
Define the average associated with the doubling measure $\nu$ by 
$$
f_{B,\nu}:=  \frac{1}{\nu (B)}  \int_B f(t) d\nu(t), \quad\text{for~each~interval}\quad B\subset\R_+,
$$
and 
$$
\|f\|_{{\rm BMO}_\nu}:=\|\mathcal{M}^{\#}_\nu f\|_{L^\infty},
$$
where
\begin{align*}
    \mathcal{M}^{\#}_\nu f(x)= \sup_{B\ni x, B\subset \mathbb R_+} \frac{1}{\nu (B)}\int_B|f(t)-f_{B,\nu}|d\nu (t)
\end{align*}
is the sharp maximal function with respect to the doubling measure $\nu.$
\bigskip

\begin{lem}[John--Nirenberg Inequality]~\\
    Let $\nu$ be a doubling measure on $\R_+$ with doubling constant $c>1$. Then we have for all interval $B\subset \R_+$ and all $t>0$
    \[
    \nu \left(\left\{ x\in B  : |f(x)-f_{B,\nu}|>t \right\}\right)\leq e\cdot\nu (B)e^{\frac{-At}{\|f\|_{{\rm BMO}_{\nu}}}}
    \]
    where $A=(e\cdot c)^{-1}$.
\end{lem}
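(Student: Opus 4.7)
The plan is to follow the classical Calder\'on--Zygmund stopping-time proof of John--Nirenberg, adapted to the doubling measure $\nu$ on $\R_+$. After normalizing so that $\|f\|_{{\rm BMO}_\nu}=1$, I fix an interval $B$ and equip $B$ with a dyadic system obtained by successive bisection. Fixing a parameter $\alpha>1$ to be optimized later, I select the maximal dyadic sub-intervals $\{B_j^{(1)}\}$ of $B$ on which the $\nu$-average of $|f-f_{B,\nu}|$ exceeds $\alpha$. Three facts follow at once: (a) by Chebyshev, $\sum_j \nu(B_j^{(1)})\leq \nu(B)/\alpha$; (b) since the $\nu$-measure of the parent of each $B_j^{(1)}$ is at most $c$ times that of $B_j^{(1)}$ (doubling) and the parent fails the stopping condition, one obtains $|f_{B_j^{(1)},\nu}-f_{B,\nu}|\leq c\alpha$; (c) by Lebesgue differentiation for the doubling measure $\nu$, one has $|f-f_{B,\nu}|\leq \alpha$ $\nu$-almost everywhere on $B\setminus\bigcup_j B_j^{(1)}$.

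The next step is to iterate this construction: on each selected $B_j^{(1)}$, repeat the stopping-time argument with $f_{B_j^{(1)},\nu}$ in place of $f_{B,\nu}$, producing the second generation $\{B_{j,k}^{(2)}\}$, and so on. Let $\mathcal{F}_k$ denote the union of all $k$-th generation intervals. Induction gives $\nu(\mathcal{F}_k)\leq \alpha^{-k}\nu(B)$, and telescoping the pointwise bounds $|f_{B^{(i)},\nu}-f_{B^{(i-1)},\nu}|\leq c\alpha$ along the ancestor chain of a point $x$, together with the outside bound $|f(x)-f_{B^{(k-1)}(x),\nu}|\leq \alpha$ a.e.\ on $B\setminus\mathcal{F}_k$, yields $|f-f_{B,\nu}|\leq \alpha+(k-1)c\alpha\leq kc\alpha$ $\nu$-a.e.\ on $B\setminus\mathcal{F}_k$ (using $c\geq 1$).

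To finish, I would optimize over $\alpha$ and $k$. For $t>0$, take $k=\lfloor t/(c\alpha)\rfloor$, so that $\{x\in B:|f(x)-f_{B,\nu}|>t\}$ lies in $\mathcal{F}_k$ up to a $\nu$-null set and therefore has measure at most $\alpha^{-k}\nu(B)\leq \alpha\cdot \alpha^{-t/(c\alpha)}\nu(B)$. Choosing $\alpha=e$ converts this into $e\cdot\nu(B)\cdot e^{-t/(ec)}$, which is precisely the bound in the statement with $A=(ec)^{-1}$. The only real obstacle is bookkeeping the doubling constant: $c$ must enter only once per generation (through the parent-to-child ratio) so that the final exponent is exactly $1/(ec)$ and not something weaker; everything else is the standard stopping-time calculus transplanted from Lebesgue measure to a doubling measure on $\R_+$.
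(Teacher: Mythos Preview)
Your proposal is correct and follows essentially the same approach as the paper: normalize, run the iterated Calder\'on--Zygmund stopping-time argument at a fixed height $\alpha$ (the paper calls it $\beta$), telescope the generation-to-generation oscillation bound $c\alpha$ to get $|f-f_{B,\nu}|\leq kc\alpha$ off the $k$-th generation, and then optimize by taking $\alpha=e$ and $k\approx t/(c\alpha)$ to land exactly on $e\cdot\nu(B)e^{-t/(ec)}$. The bookkeeping you flag---that the doubling constant enters once per generation via the parent-to-child measure ratio---is precisely how the paper obtains the exponent $A=(ec)^{-1}$.
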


\begin{proof}
    First, we normalize $\|f\|_{{\rm BMO}_{\nu}}=1$. Fix a $B=B^{(0)}$, we perform Calder\'on--Zygmund decomposition at height $\beta$ to $f-f_{B,\nu}$ on $B$ to get a collection of intervals $\{ B_{j}^{(1)}\}_{j}$. Then the process is repeated again to $f-f_{B_{j}^{(1)},\nu}$ for each $B_{j}^{(1)}$ to get a second collection $\{ B_{l}^{(1)}\}_{l}$. Continuing this process and let $\{ B_{j}^{(k)}\}_{j}$ be the collection of intervals in the $k$-th generation. Then we have the following properties
\begin{align*}
        &(1).\quad \beta <\frac{1}{\nu (B_{j}^{(k)})}\int_{B_{j}^{(k)}}|f(x)-f_{B_{j'}^{k-1},\nu}|d\nu (x)\leq c\cdot \beta,\\
        &(2). \quad |f_{B_{j}^{k},\nu}-f_{B_{j'}^{k-1},\nu}|\leq c\cdot \beta,\\
        &(3).\quad \sum_{j}\nu (B_{j}^{(k)})\leq \frac{1}{\beta} \sum_{j'}\nu (B_{j'}^{(k-1)}),\\
        &(4).\quad |f-f_{B_{j'}^{(k-1)},\nu}|\leq \beta \quad a.e. \quad on \quad B_{j'}^{(k-1)} \setminus\bigcup_{j}B_{j}^{(k)},
\end{align*}
where $B_{j'}^{k-1}$ is the parent of $B_{j}^{(k)}$.\\
From $(3)$, we immediately have
\[
\sum_{j}\nu (B_{j}^{(k)})\leq \beta^{-k}\nu (B).
\]
On the other hand, by $(3)$ and $(4)$, we have
\[
|f-f_{B,\nu}|\leq c\cdot k \beta\quad \text{on} \quad B \setminus \bigcup_{j}B_{j}^{(k)}.
\]
Hence, we have
\[
\left\{ x\in B :|f(x)-f_{B,,\nu}|>c\cdot k \beta \right\}\subset\bigcup_{j}B_{j}^{(k)}.
\]
For $t$ in the range $ck\beta<t\leq c(k+1)\beta$ for some $k>0$, we have
\begin{align*}
    \nu \left(\left\{ x\in B : |f(x)-f_{B,\nu}|>t \right\}\right)&\leq \nu \left( \left\{ x\in B :|f(x)-f_{B,,\nu}|>ck\beta\right\}\right)\\
    &\leq \sum_{j}\nu(B_{j}^{k})\\
    &\leq \lambda^{-k}\nu (B)=e^{-k\cdot\log \beta}\cdot\nu (B).
\end{align*}
Notice that $-k<1-\frac{t}{c\beta}$ and take $\beta=e$, then we have the desired result.
\end{proof}

\bigskip

\begin{cor}\label{cor exp}
    Suppose $f\in {\rm BMO}_{\nu}$. Then we have the following two results.\\
    $(1)$ There is a constant $C_s<+\infty$ such that for any interval $B\subset\R_+$, 
    $$
    |s|<(ce\|f\|_{{\rm BMO}_{\nu}})^{-1} \implies
    \frac{1}{\nu (B)}\int_{B}e^{s|f(x)-f_{B,\nu}|}d\nu \leq     C_s:=1+e\cdot \frac{A^{-1}\cdot|s|\cdot\|f\|_{{\rm BMO}_\nu}}{1-A^{-1}\cdot|s|\cdot\|f\|_{{\rm BMO}_\nu}}.
    $$
    $(2)$ For all $1<p<\infty,$
    $$ \sup_{B\subset\R_+}\bigg(\frac{1}{\nu (B)}\int_{B}e^{sf(x)}d\nu(x)\bigg)\bigg(\frac{1}{\nu (B)}\int_{B}e^{\frac{-1}{p-1}\cdot sf(x)}d\nu(x)\bigg)^{p-1}\leq C_s^{p}.
$$.
\end{cor}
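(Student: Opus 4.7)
The plan is to prove part (1) by a standard layer-cake decomposition combined with the John--Nirenberg inequality just established, and then derive part (2) by centering the exponents at $f_{B,\nu}$ so that the mean contributions cancel.

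For part (1), I may assume $s>0$ (otherwise replace $s$ by $|s|$, using $e^{sf}\le e^{|s||f|}$ inside $|f-f_{B,\nu}|$). By the distribution-function identity applied to $g=e^{s|f-f_{B,\nu}|}-1$ and the substitution $t=e^{su}-1$,
\begin{align*}
\int_{B}e^{s|f-f_{B,\nu}|}\,d\nu
&=\nu(B)+s\int_{0}^{\infty}e^{su}\,\nu\!\left(\left\{x\in B:|f(x)-f_{B,\nu}|>u\right\}\right)du.
\end{align*}
Inserting the John--Nirenberg bound $\nu(\{x\in B:|f-f_{B,\nu}|>u\})\le e\cdot\nu(B)\,e^{-Au/\|f\|_{{\rm BMO}_\nu}}$ reduces matters to evaluating $\int_{0}^{\infty}e^{(s-A/\|f\|_{{\rm BMO}_\nu})u}\,du$. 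Under the smallness assumption $|s|<(ec\|f\|_{{\rm BMO}_\nu})^{-1}=A/\|f\|_{{\rm BMO}_\nu}$, this integral converges to $\|f\|_{{\rm BMO}_\nu}/(A-s\|f\|_{{\rm BMO}_\nu})$, which, after dividing by $\nu(B)$, yields exactly the stated constant
\[
1+e\cdot\frac{A^{-1}|s|\|f\|_{{\rm BMO}_\nu}}{1-A^{-1}|s|\|f\|_{{\rm BMO}_\nu}}=C_s.
\]

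For part (2), the key trick is the factorization $e^{sf(x)}=e^{sf_{B,\nu}}\,e^{s(f(x)-f_{B,\nu})}$ and analogously for $e^{-sf/(p-1)}$. Pulling out the constants $e^{sf_{B,\nu}}$ from each average gives
\begin{align*}
e^{sf_{B,\nu}}\cdot\bigl(e^{-sf_{B,\nu}/(p-1)}\bigr)^{p-1}=e^{sf_{B,\nu}}\cdot e^{-sf_{B,\nu}}=1,
\end{align*}
so the two mean exponentials cancel exactly. The product then reduces to
\[
\left(\frac{1}{\nu(B)}\int_{B}e^{s(f-f_{B,\nu})}d\nu\right)\left(\frac{1}{\nu(B)}\int_{B}e^{-\frac{s}{p-1}(f-f_{B,\nu})}d\nu\right)^{p-1},
\]
and bounding $e^{\pm s(f-f_{B,\nu})}\le e^{|s||f-f_{B,\nu}|}$ (respectively with $s$ replaced by $s/(p-1)$) allows direct application of part (1) to each factor, producing $C_s\cdot C_s^{\,p-1}=C_s^{\,p}$.

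I do not expect any deep obstacle; the calculation is essentially the classical John--Nirenberg exponential integrability argument transplanted to the doubling measure $\nu$. The one point deserving care is the implicit smallness condition in part (2): the bound $C_s^p$ requires the John--Nirenberg tail estimate to be applicable to both exponents $s$ and $-s/(p-1)$ simultaneously, so the statement is to be understood in the range $\max(|s|,|s/(p-1)|)<(ec\|f\|_{{\rm BMO}_\nu})^{-1}$, with $C_s$ taken to be the constant from part (1) associated with the larger of the two moduli. Once this is acknowledged, the proof is a short two-line manipulation on top of part (1).
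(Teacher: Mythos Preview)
Your proposal is correct and follows essentially the same route as the paper: part (1) via the layer-cake identity combined with the John--Nirenberg tail bound, and part (2) by centering at $f_{B,\nu}$ so that the exponential of the mean cancels, then applying part (1) to each factor. Your remark about the implicit smallness condition $\max(|s|,|s/(p-1)|)<A/\|f\|_{{\rm BMO}_\nu}$ in part (2) is exactly the restriction the paper records as $|s|<\frac{A}{\|f\|_{{\rm BMO}_\nu}}\min\{1,p-1\}$.
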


\begin{proof}
    $(1)$ By layer-cake representation, we have the following identity
    \[
    \frac{1}{\nu (B)}\int_{B}e^{h}d\nu=1+\frac{1}{\nu (B)}\int_{0}^{\infty}e^{t}\nu \left(\left \{ x\in B : |h(x)|>t \right\}\right)dt.
    \]
    Then we plug in $h=s|f-f_{B,\nu}|$ to obtain 
    \begin{align*}
    \frac{1}{\nu (B)}\int_{B}e^{s|f-f_{B,\nu}|}d\nu&=1+\frac{1}{\nu (B)}\int_{0}^{\infty}e^{t}\nu \left(\left \{ x\in B : |s||f(x)-f_{B,\nu}|>t \right\}\right)dt\\
    &\leq 1+\frac{1}{\nu(B)}\int^\infty_0 e^t \cdot e\cdot\nu(B) e^{\frac{-At}{|s|\cdot\|f\|{{\rm BMO}_{\nu}}}}dt\\
    &=1+e\cdot\int^\infty_0 \exp\left\{t\left(1-\frac{1}{c e |s| \|f\|_{{\rm BMO}_\nu} }\right)\right\}dt\\
    &=1+e\cdot \frac{ce\cdot|s|\|f\|_{{\rm BMO}_\nu}}{1-ce\cdot|s|\cdot\|f\|_{{\rm BMO}_\nu}}<\infty.
    \end{align*}
    \\
    $(2)$ From above, we have for all interval $B\subset\R_+$ 
\[
|s|<\frac{A}{\|f\|_{{\rm BMO}_\nu}}
\implies
\frac{1}{\nu (B)}\int_{B}e^{s(f(x)-f_{B,\nu})}d\nu \leq C_s
\]
and
\[
\left|\frac{s}{p-1}\right|<\frac{A}{\|f\|_{{\rm BMO}_\nu}}
\implies    \left(\frac{1}{\nu (B)}\int_{B}e^{\frac{-1}{p-1} s(f(x)-f_{B,\nu})}d\nu\right)^{p-1} \leq C^{p-1}_s.
\]
    Multiplying them together, then we have
\[
|s|<\frac{A}{\|f\|_{{\rm BMO}_\nu}}\cdot \min\{1,p-1\}\implies
\sup_{B}\left(\frac{1}{\nu (B)}\int_{B}e^{sf(x)}d\nu(x)\right)\left(\frac{1}{\nu (B)}\int_{B}e^{\frac{-1}{p-1}sf(x)}d\nu\right)^{p-1}\leq C^{p}_s.
\]
\end{proof}

\bigskip

\section{Proof of Proposition \ref{propapl} and Properties of $\widetilde{A}_{p,\lambda}$}

Before proving the $L^p (\R_+,w)$-boundedness of maximal function $\cm_\lambda$, we first study the structure of $A_{p,\lambda}$, and $\widetilde{A}_{p,\lambda}$ and their relationships with the classical $A_p$ weights. For the $A_{p,\lambda}$ class, we define the norm of a  $A_{p,\lambda}$ weight $w$ to be
\begin{align}\label{aplcond norm}
[w]_{A_{p,\lambda}}&=\sup_{B\subset\R_+}\bigg(\frac{1}{\nu_\lambda (B)}\int_Bt^{p}w(t)dt\bigg)\cdot\bigg( \frac{1}{\nu_\lambda (B)}
\int_B t^{2\lambda p'}w(t)^{-\frac{1}{p-1}}dt\bigg)^{p-1},\nonumber 
\end{align}
and develop their natural structure analogous to $A_p$.
\bigskip

$(1)$
$$[w]_{A_{p,\lambda}}\geq 1. $$

\begin{proof}
\begin{align*}
1&=   \frac{1}{\nu_\lambda (B)}  \int_B t^{1+2\lambda}dt \\ 
&=   \frac{1}{\nu_\lambda (B)}  \int_B t\, w(t)^{1\over p}\, t^{2\lambda}\,w(t)^{-{1\over p}}dt \\ 
&\leq   \frac{1}{\nu_\lambda (B)} \bigg(\int_B t^pw(t)dt\bigg)^{1\over p} \bigg( \int_B t^{2\lambda p'}w(t)^{-{p'\over p}}dt  \bigg)^{1\over p'}\\
&\leq  \bigg( \frac{1}{\nu_\lambda (B)} \int_B t^pw(t)dt\bigg)^{1\over p} \bigg(  \frac{1}{\nu_\lambda (B)}\int_B t^{2\lambda p'}w(t)^{-{p'\over p}}dt  \bigg)^{1\over p'}\\
&\leq [w]_{A_{p,\lambda}}^{1\over p}.
\end{align*}
Thus,
we have
\begin{align*}
[w]_{A_{p,\lambda}}&\geq 1.
\end{align*}
\end{proof}

\medskip

$(2)$ The following two measures are both doubling on $\R_+$:
$$d\mu(t)= t^pw(t)dt, $$
and
$$d\sigma(t)= t^{2\lambda p'}w(t)^{-{p'\over p}}dt. $$

\begin{proof}
Recall that a new average of $f$ over interval $B=(a,b)$ as follows: 
\begin{align*}
f_{B,\lambda}:=   \frac{1}{\nu_\lambda (B)}  \int_{B} f(t)\, t^{1+2\lambda}\,dt,
\end{align*}
which implies that
\begin{align*}
(f_{B,\lambda})^p&\leq \left(  \frac{1}{\nu_\lambda (B)}  \int_B |f(t)|\ t^{1+2\lambda}\,dt \right)^p\\ 
&= \left(   \frac{1}{\nu_\lambda (B)}\int_B |f(t)|\ t\, w(t)^{1\over p}\cdot t^{2\lambda}\,w(t)^{-{1\over p}}dt \right)^p\\
&\leq \frac{1}{\nu_\lambda (B)^p}\left(\int_B |f(t)|^p t^pw(t)dt\right) \left( \int_B t^{2\lambda p'}w(t)^{-{p'\over p}}dt  \right)^{p\over p'} \\
&=   \left(\int_B |f(t)|^p t^pw(t)dt\right)\cdot \left(\int_B t^pw(t)dt\right)^{-1}\\
&\qquad\times \left( \frac{1}{\nu_\lambda (B)}\int_B t^pw(t)dt\right)\cdot \left( \frac{1}{\nu_\lambda (B)}  \int_B t^{2\lambda p'}w(t)^{-{p'\over p}}dt \right)^{p\over p'} \\
&\leq\left(\int_B |f(t)|^p t^pw(t)dt\right)\cdot\left(\int_B t^pw(t)dt\right)^{-1} \cdot [w]_{A_{p,\lambda}}.
\end{align*}
Thus, if we take $f = \chi_{B}$ and put $\eta B$ in the place of $B$ in the above estimates with $\eta>1$, we have  

\begin{align*}
\int_{\eta B} t^pw(t)dt \leq C_p [w]_{A_{p,\lambda}} \eta^{(2\lambda+2) p} \int_{B} t^pw(t)dt.
\end{align*}
This shows that 
$d\mu(t)= t^pw(t)dt $ is a doubling measure on $\mathbb R_+$. One can show that $d\sigma$ is a doubling measure as well.
\end{proof}

\medskip

$(3)$ (Dual Factor)
$$w\in A_{p,\lambda}\Longleftrightarrow t^{(2\lambda-1)p'}w^{-1\over p-1}\in A_{p',\lambda}.$$

\begin{proof}
Assume that $w\in A_{p,\lambda}$, then we have for some $ C>0$ such that for all interval $B:=(a,b)\subset \R_+$,
\[
\left( \frac{1}{\nu_\lambda (B)} \int_B t^pw(t)dt\right)\cdot\left(  \frac{1}{\nu_\lambda (B)}\int_B t^{2\lambda p'}w(t)^{-{p'\over p}}dt \right)^{p\over p'}<C.
\]
On the other hand,
\begin{align*}
\left(\int_{B}t^{p'}t^{(2\lambda-1)p'}w(t)^{-{1\over p-1}}dt\right)\cdot\left(\int_{B}t^{2\lambda p}(t^{(2\lambda-1)p'}w(t)^{-{1\over p-1}})^{-\frac{1}{p'-1}}dt\right)^{p'-1},
\end{align*}
which is equal to
\begin{align*}
\left(\int_{B}t^{2\lambda p'}w(t)^{-\frac{1}{p-1}}dt\right)\cdot\left(\int_{B} t^p w(t)dt\right)^{1\over p-1}.
\end{align*}
Thus,
\begin{align*}
\left(\frac{1}{\nu_{\lambda} (B)}\int_{B}t^{p'}t^{(2\lambda-1)p'}w(t)^{-{1\over p-1}}dt\right)\cdot\left(\frac{1}{\nu_{\lambda}(B)}\int_{B}t^{2\lambda p}(t^{(2\lambda-1)p'}w(t)^{-{1\over p-1}})^{-\frac{1}{p'-1}}dt\right)^{p'-1}
\leq C.
\end{align*}
The other direction is similar.
\end{proof}

\medskip

 $(4)$(Nesting structure)
 \[
\forall \lambda >-\frac{1}{2},\neq 0, 1\leq p_1<p
_2<\infty \implies A_{p_1,\lambda}\subset A_{p_2,\lambda}, 
\]
and
\[
\forall p\in (1,\infty),\lambda_2>\lambda_1>-\frac{1}{2},\neq 0 \implies A_{p,\lambda_1}\subset A_{p,\lambda_2}.
\]

\begin{proof}
 Suppose $w\in A_{p_1,\lambda}$, then for all $B:=(a,b)\subset\R_+$,
 \begin{align*}
&\left(\int_{a}^{b}t^{p_2}w(t)dt\right)\cdot\left(\int_{a}^{b}t^{2\lambda {p_2}'}w(t)^{-\frac{1}{{p_2}-1}}dt\right)^{p_2-1}\\
&\leq b^{{p_2}-{p_1}}\left(\int_{a}^{b}t^{p_1}w(t)dt\right)\cdot\left(\int_{a}^{b}t^{2\lambda {p_2}'}w(t)^{-\frac{1}{{p_2}-1}}dt\right)^{p_2-1}\\
&= b^{{p_2}-{p_1}}\left(\int_{a}^{b}t^{p_1}w(t)dt\right)\cdot\left(\int_{a}^{b}\left(t^{2\lambda {p_1\over p_2-1}}w(t)^{-\frac{1}{{p_2}-1}}\right)t^{2\lambda{{p_2-p_1}\over {p_2-1}}}dt\right)^{p_2-1},
\end{align*}
Applying Holder's inequality with exponent ${p_2-1}\over{p_1-1}$ and ${p_2-1}\over{p_2-p_1}$ to the second term, and hence
\begin{align*}
\left(\int_{a}^{b}t^{p_2}w(t)dt\right)\cdot\left(\int_{a}^{b}t^{2\lambda {p_2}'}w(t)^{-\frac{1}{{p_2}-1}}dt\right)^{p_2-1}
&\leq Cb^{{p_2}-{p_1}}(b^{2(\lambda+1)}-a^{2(\lambda +1)})^{p_1-1}\\
&\cdot (b^{2\lambda+1}-a^{2\lambda+1})^{p_2-p_1}\\
&\leq C(b^{2(\lambda+1)}-a^{2(\lambda +1)})^{p_2},
\end{align*}
or equivalently
$$
[w]_{A_{p_2,\lambda}}\leq [w]_{A_{p_1,\lambda}}.
$$
\bigskip
For the second part, we let $w\in A_{p,\lambda_1}$, then
\begin{align*}
\left(\int_{a}^{b}t^{p}w(t)dt\right)\cdot\left(\int_{a}^{b}t^{2\lambda_2 p'}w(t)^{-\frac{1}{p-1}}dt\right)^{p-1}
&\leq\left(\int_{a}^{b}t^{p}w(t)dt\right)\cdot\left(\int_{a}^{b}t^{2\lambda_1 p'}w(t)^{-\frac{1}{p-1}}dt\right)^{p-1}\\
&\cdot  (b^{2{p'}(\lambda_2-\lambda_1)})^{p-1}\\
&\leq C (b^{2(\lambda_1+1)}-a^{2(\lambda_1+1)})^{p}(b^{2{p'}(\lambda_2-\lambda_1)})^{p-1}\\ 
&\leq C(b^{2(\lambda_2+1)}-a^{2(\lambda_2+1)})^{p}.
\end{align*}
Thus, $w\in A_{p,\lambda_2}$.
\end{proof}

\medskip

We also have analogous structures for $\widetilde{A}_{p,\lambda}.$ We first recall the definition of $\widetilde{A}_{p,\lambda}$.
\begin{defn}[$\widetilde{A}_{p,\lambda}$ condition]~\\
We say that $w\in \widetilde{A}_{p,\lambda}$ if there exists $C>0$ such that for all interval $B:=(a,b)\subset\R_+$
$$
\left({1\over \nu_\lambda(B)}\int_B w(t)dt \right)\cdot\left({1\over \nu_\lambda(B)}\int_{B}t^{(2\lambda+1) p'}w(t)^{-\frac{1}{p-1}}dt\right)^{p-1}\leq C,
$$
or equivalently
$$
\left({1\over \nu_\lambda(B)}\int_{B}\frac{w(t)}{\nu_\lambda(t)}d\nu_\lambda (t) \right)\cdot\left({1\over \nu_\lambda(B)}\int_{B} \left(\frac{w(t)}{\nu_\lambda(t)}\right)^{-\frac{1}{p-1}}d\nu_\lambda (t)\right)^{p-1}\leq C.
$$
\end{defn}

\medskip

\begin{rem}
 In general, for maximal operator $\M_{\nu}$, where $\nu$ is a doubling measure, a classical result by Calder\'on \cite{calderon1976inequalities} gives a characterization on weight $w$ such that $\M_{\nu}$ is bounded on $L^{p}(w, d\nu)$, that is
\[
\sup_{B}\left(\frac{1}{\nu (B)}\int_{B}w(x)d\nu(x) \right)\cdot
\left(\frac{1}{\nu (B)}\int_{B}w(x)^{-\frac{1}{p-1}}d\nu(x)\right)^{p-1}\lesssim 1.
\]
\end{rem}

\medskip

The proof of the following proposition is similar to the proof for $A_{p, \lambda}$ so that we only state the results and skip the proof.

\begin{prop}[Structure of $\widetilde{A}_{p,\lambda}$]~\\
$(1)$
$$[w]_{\widetilde{A}_{p,\lambda}}\geq 1.$$
$(2)$ If $w\in \widetilde{A}_{p,\lambda}$, then $w$ and $t^{(2\lambda+1)p'}w(t)^{\frac{-1}{p-1}}$ are both doubling measures on $\R_+$.\\
$(3)$ (Dual Factor)
$$
w\in \widetilde{A}_{p,\lambda}\Longleftrightarrow
t^{(2\lambda+1)p^{'}}w(t)^{\frac{-1}{p-1}}\in \widetilde{A}_{p',\lambda}.$$
$(4)$(Nesting structure)
 \[
\forall \lambda >-\frac{1}{2},\neq 0, 1\leq p_1<p
_2<\infty \implies \widetilde{A}_{p_1,\lambda}\subset \widetilde{A}_{p_2,\lambda}, 
\]
and
\[
\forall p\in (1,\infty),\lambda_2>\lambda_1>-\frac{1}{2},\neq 0 \implies \widetilde{A}_{p,\lambda_1}\subset \widetilde{A}_{p,\lambda_2}.
\]
\end{prop}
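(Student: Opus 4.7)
My plan is to exploit the equivalence pointed out in the discussion right after Definition \ref{defaplt},
\[
w\in\widetilde{A}_{p,\lambda}\iff \frac{w}{\nu_\lambda}\in A_p(d\nu_\lambda),
\]
with matching characteristic constants, where $A_p(d\nu_\lambda)$ denotes the classical Muckenhoupt class on the space of homogeneous type $(\R_+,|\cdot|,d\nu_\lambda)$. Once this reformulation is in hand, items $(1)$, $(2)$, $(3)$, and the $p$-nesting in $(4)$ become immediate corollaries of well-established properties of classical $A_p$ weights on doubling metric spaces; only the $\lambda$-nesting in $(4)$ requires a dedicated calculation, because there the ambient measure itself depends on the parameter.

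For $(1)$, I apply H\"older's inequality with exponents $(p,p')$ to obtain
\[
1=\frac{1}{\nu_\lambda(B)}\int_B d\nu_\lambda=\frac{1}{\nu_\lambda(B)}\int_B\Bigl(\tfrac{w}{\nu_\lambda}\Bigr)^{1/p}\Bigl(\tfrac{w}{\nu_\lambda}\Bigr)^{-1/p}d\nu_\lambda\le [w]_{\widetilde{A}_{p,\lambda}}^{1/p},
\]
after using $\bigl(\tfrac{w}{\nu_\lambda}\bigr)^{-1/(p-1)}d\nu_\lambda=t^{(2\lambda+1)p'}w^{-1/(p-1)}dt$ to identify the two expressions. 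For $(2)$, I invoke the classical fact that any $A_p$-weight on a doubling space produces a doubling weighted measure: applied to $u=w/\nu_\lambda$ this gives doubling of $u\,d\nu_\lambda=w\,dt$, while applying the same fact to $u^{-1/(p-1)}$ (which coincides with $t^{(2\lambda+1)p'}w^{-1/(p-1)}/\nu_\lambda$) handles the dual weight. For $(3)$, the algebraic identity
\[
\frac{t^{(2\lambda+1)p'}w(t)^{-1/(p-1)}}{\nu_\lambda(t)}=\Bigl(\frac{w(t)}{\nu_\lambda(t)}\Bigr)^{-\frac{1}{p-1}}
\]
reduces the dual factor to the standard duality $u\in A_p(d\nu_\lambda)\iff u^{-1/(p-1)}\in A_{p'}(d\nu_\lambda)$; the $p_1\subset p_2$ nesting in $(4)$ is the classical nesting for $A_p(d\nu_\lambda)$, again transferred through the same reformulation.

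The only genuinely new piece is the parameter nesting $\widetilde{A}_{p,\lambda_1}\subset\widetilde{A}_{p,\lambda_2}$ for $\lambda_1<\lambda_2$, and here I would repeat the bookkeeping from the proof of Proposition \ref{propapl}(4). On $B=(a,b)\subset\R_+$, the pointwise bound $t^{(2\lambda_2+1)p'}\le b^{2(\lambda_2-\lambda_1)p'}t^{(2\lambda_1+1)p'}$ combined with the identity $p'(p-1)=p$ pulls a factor $b^{2(\lambda_2-\lambda_1)p}$ out of the $(p-1)$-st power, so the $\widetilde{A}_{p,\lambda_1}$ hypothesis yields an upper bound $C\,b^{2(\lambda_2-\lambda_1)p}\,\nu_{\lambda_1}(B)^p$. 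The elementary comparison
\[
b^{2(\lambda_2-\lambda_1)}\bigl(b^{2(\lambda_1+1)}-a^{2(\lambda_1+1)}\bigr)=b^{2(\lambda_2+1)}-a^{2(\lambda_1+1)}b^{2(\lambda_2-\lambda_1)}\le b^{2(\lambda_2+1)}-a^{2(\lambda_2+1)},
\]
which is valid because $(b/a)^{2(\lambda_2-\lambda_1)}\ge 1$, then gives $b^{2(\lambda_2-\lambda_1)}\nu_{\lambda_1}(B)\lesssim\nu_{\lambda_2}(B)$ with a constant depending only on $\lambda_1,\lambda_2$, which closes the estimate. I expect this last calculation to be the main obstacle: not conceptually deep, but it is the one step where the reduction to classical $A_p$ theory on $(\R_+,d\nu_\lambda)$ breaks down, so the homogeneity exponents must be tracked directly.
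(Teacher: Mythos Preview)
Your proposal is correct and aligns with the paper's own treatment: the paper skips the proof entirely, stating only that it ``is similar to the proof for $A_{p,\lambda}$,'' and your argument follows exactly that template, with the $\lambda$-nesting computation mirroring the one given for Proposition~\ref{propapl}(4). Your systematic use of the reformulation $w\in\widetilde{A}_{p,\lambda}\iff w/\nu_\lambda\in A_p(d\nu_\lambda)$ for items (1)--(3) and the $p$-nesting is in fact the very observation the paper highlights immediately after Theorem~\ref{thmsbm}, so this is not a departure but a clean application of the paper's own framing.
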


\bigskip
\bigskip

\section{Proof of Proposition \ref{proprelap}}
The main point of this proposition is to show that these three classes of weights, $A_p$, $A_{p,\lambda}$ and $\widetilde{A}_{p,\lambda}$ in general are different and especially $A_{p,\lambda}$ and $\widetilde{A}_{p,\lambda}$ are not contained in each other. Among explicit examples, we consider the simple but fundamental power weights.
Let $w(x)=|x|^{\alpha}$ for some $\alpha$.\\

$(1)$
\[
w\in A_{p} \Longleftrightarrow -1<\alpha <p-1.
\]
First, we need $ \alpha > -1$ to make $w$ to be doubling. Now given a ball $B=B(x_{0}, R)$, where $x_{0}$ is the center and $R$ is the radius, we consider the following two cases:
\begin{itemize}
    \item \textbf{Case(1)}:\,$|x_{0}|$ dominates.
\end{itemize}
In the first case, for those $|x-x_{0}|<R$, we have $|x|\sim |x_{0}|$, hence
\[
\left(\frac{1}{|B|}\int |x|^{\alpha}dx\right)\left(\frac{1}{|B|}\int |x|^{{-\alpha}\frac{p'}{p}}\right)^{p-1}\sim |x_{0}|^{\alpha}|x_{0}|^{-\alpha } \sim 1.
\]
\begin{itemize}
    \item \textbf{Case(2)}:\,$R$ dominates.
\end{itemize}
In the second case and by the doubling property, we may replace $B(x_{0}, R)$ by $B(0,5R)$ and compute the following
\[
\left(\frac{1}{R}\int_{-R}^{R}x^{\alpha}dx\right)\left(\frac{1}{R}\int_{-R}^{R}x^{-\alpha \frac{p'}{p}}dx\right)^{p-1}.
\]
If we want the above term to be finite uniformly in $R$, we only need to check both terms are integrable near $0$. Hence this gives the following conditions which give the range for $\alpha$.
\[
\alpha >-1,\quad -\alpha\frac{p'}{p}>-1.
\]

\bigskip

$(2)$
 \[
w\in A_{p,\lambda} \Longleftrightarrow -1-p<\alpha <p-1+2\lambda.
\]
The computations are similar to the above. We simply compute the following 
\[
\left(\frac{1}{R}\int_{-R}^{R}x^{\alpha +p}dx\right)\left(\frac{1}{R}\int_{-R}^{R}x^{2\lambda p'-\alpha \frac{p'}{p}}dx\right)^{p-1}
\]
that gives the range for $\alpha$ which is
\[
\alpha +p>-1, \quad 2\lambda p'-\alpha \frac{p'}{p}>-1.
\]

\bigskip

$(3)$

\[
w\in \widetilde{A}_{p,\lambda} \Longleftrightarrow -1<\alpha <p-1+(2\lambda+1)p.
\]
Again, we compute the following 
\[
\left(\frac{1}{R}\int_{-R}^{R}x^{\alpha }dx\right)\left(\frac{1}{R}\int_{-R}^{R}x^{(2\lambda+1) p'-\alpha \frac{p'}{p}}dx\right)^{p-1},
\]
which gives
\[
\alpha>-1, \quad (2\lambda+1) p'-\alpha \frac{p'}{p}>-1.
\]

\bigskip

Moreover, these three classes coincide locally. Precisely, let us define a local version as the following.
\begin{defn}[$A^{\text{loc}}_{p,\lambda}$, $\widetilde{A}^{\text{loc}}_{p,\lambda}$ and $A^{\text{loc}}_{p,+}$]~\\
Let $1<p<\infty$ and $\lambda>-\frac{1}{2},\neq 0. $
\\
$(1)$ 
$w\in A^{\text{loc}}_{p,\lambda}$ if there exist $C,k=k(w)>0$ such that for all $B:=(a,b)\subset (0,\infty)$ with $b\leq ka$,
$$\bigg({1\over \nu_\lambda(B)}\int_B t^p w(t)dt\bigg)\bigg({1\over \nu_\lambda(B)}\int_{B} t^{2\lambda p'}w(t)^{-\frac{1}{p-1}}dt\bigg)^{p-1}\leq C.
$$
$(2)$ 
$w\in \widetilde{A}^{\text{loc}}_{p,\lambda}$ if there exist $C,k=k(w)>0$ such that for all $B:=(a,b)\subset (0,\infty)$ with $b\leq ka$,
$$\bigg({1\over \nu_\lambda(B)}\int_{B} w(t)dt\bigg)\bigg({1\over \nu_\lambda(B)}\int_{B} t^{(2\lambda+1) p'}w(t)^{-\frac{1}{p-1}}dt\bigg)^{p-1}\leq C.
$$
$(3)$
$w\in A^{\text{loc}}_{p,+}$ if there exist $C,k=k(w)>0$ such that for all $(a,b)\subset (0,\infty)$ with $b\leq ka$,
$$\bigg({1\over {b-a}}\int_{a}^{b} w(t)dt\bigg)\bigg({1\over {b-a}}\int_{a}^{b} w(t)^{-\frac{1}{p-1}}dt\bigg)^{p-1}\leq C.
$$
\end{defn}

\bigskip

For $p\in(1,\infty)$ we have
    $$
A^{\text{loc}}_{p,+}=\widetilde{A}^{\text{loc}}_{p,\lambda}=A^{\text{loc}}_{p.\lambda}.
$$

Indeed, we observe that for all $0<a<b<\infty$,
$$
a^p\leq {{\int_{a}^{b}t^pw(t)dt}\over{\int_{a}^{b}w(t)dt}} \leq b^p,\quad ({1\over b})^p\leq {{\int_{a}^{b}w(t)dt}\over{\int_{a}^{b}t^pw(t)dt}} \leq ({1\over a})^p
$$
and
$$
{1\over b^p}
\leq{\left(\int_{a}^{b}t^{2\lambda p'}w(t)^{-\frac{1}{p-1}}dt\right)^{p-1}\over{\left(\int_{a}^{b}t^{(2\lambda+1) p'}w(t)^{-\frac{1}{p-1}}dt\right)^{p-1}}}
\leq {1\over a^p}, 
\quad 
a^p
\leq{\left(\int_{a}^{b}t^{(2\lambda+1) p'}w(t)^{-\frac{1}{p-1}}dt\right)^{p-1}\over{\left(\int_{a}^{b}t^{2\lambda p'}w(t)^{-\frac{1}{p-1}}dt\right)^{p-1}}}
\leq b^p.
$$
Therefore from the assumption we immediately have
$$\widetilde{A}^{\text{loc}}_{p,\lambda}=A^{\text{loc}}_{p,\lambda}.
$$
Likewise, we also have
$$
A^{\text{loc}}_{p,+}=\widetilde{A}^{\text{loc}}_{p,\lambda}.
$$

It might be worth pointing out that these simple properties for a local version of weights can be helpful when we study the boundedness of operators since we have many well-established results for the classical $A_p$ weights. 

\bigskip

\section{Proof of Weak-Type Estimate on $\M_ \lambda$}
Our ultimate goal for the maximal function $\M_\lambda$ is to prove the weighted strong-type boundedness. Before doing so, we show the weak-type boundedness first.

\begin{proof}
 To begin with, we need to show that $$ w\in \widetilde{A}_{p,\lambda} \Longleftrightarrow
 w(\{x\in\mathbb{R_+}:\mathcal M_{\lambda}f(x)>\alpha\})\leq \frac{C}{\alpha^p}\int_{\mathbb{R_+}}|f(x)|^pw(x)dx, \forall \alpha>0.
 $$
 $(\Longleftarrow)$
Let $B:=(a,b)\subseteq\mathbb{R_+}$ be an interval such that
$$
\int_B|f(x)|dx>0.
$$
For all $0<\alpha<|f|_{B,\lambda}$, we have 
$$
B\subset\{x\in\mathbb{R_+}:\cm_\lambda(f\,\mathbbm{1}_B)(x)>\alpha\}
\implies w(B)\leq \frac{C}{\alpha^p}\cdot\int_B|f(x)|^p w(x)dx,
$$
and hence
\begin{align*}
   w(B)\cdot\left(\frac{1}{\nu_\lambda(B)}\int_B |f(x)| x^{2\lambda+1}dx\right)^p\leq C\int_B|f(x)|^pw(x) dx.
\end{align*}
 Take $f(x)=w(x)^{1-p^{'}}x^{{2\lambda+1}\over {p-1}}$, then we have
 $$
 \left({1\over \nu_\lambda((a,b))}\int_{a}^{b}w(t)dt\right)\left({1\over \nu_\lambda((a,b))}\int_{a}^{b}t^{(2\lambda+1) p'}w(t)^{-\frac{1}{p-1}}dt\right)^{p-1}\leq C.
 $$
 $(\Longrightarrow)$
 Conversely,
\begin{align*}
\frac{1}{\nu_\lambda(B)}\int_B |f(x)| x^{2\lambda+1}dx&\leq \left(\frac{1}{\nu_\lambda(B)}\int_B|f(x)|^pw(x)dx\right)^{1\over{p}}\left(\frac{1}{\nu_\lambda(B)}\int_B x^{{p'}(2\lambda+1)} w(x)^{\frac{-1}{p-1}}dx\right)^{1\over{p'}}\\
&\leq C\left(\frac{1}{\nu_\lambda(B)}\int_B|f(x)|^pw(x)dx\right)^{1\over{p}}\left(\frac{w(B)}{\nu_\lambda (B)}\right)^{-1\over{p}},
\end{align*}
the second inequality holds because of $w \in \widetilde{A}_{p,\lambda},
$
and hence
$$
w(B)\cdot\left({1\over{\nu_\lambda(B)}}\int_B |f(x)| x^{2\lambda+1}dx\right)^p\leq C\int_B |f(x)|^p w(x)dx.
$$
By the classical covering method ($d\nu_\lambda$ is a doubling measure), we have
\begin{align*}  
w(\{x\in\mathbb{R}_+:\cm_{\lambda}f(x)>\alpha\}) &\leq C\sum_{B}w(B)\\
&\leq C\sum_{B} (|f|_{B,\lambda})^{-p}\int_{B}|f(x)|^{p} w(x)dx\\
&\leq {C\over \alpha^p} \int_{\mathbb{R}_+}|f(x)|^p w(x)dx,
\end{align*}
where the first inequality follows from that $w$ is doubling.
\end{proof}

Next, to prove the strong-type boundedness, we need the reverse structure of $\widetilde{A}_{p,\lambda}.$

\bigskip

\section{Dyadic Maximal $\M^{d}_\lambda$, Reverse H\"older Inequality on $\widetilde{A}_{p,\lambda}$, and Proof of Theorem \ref{thmsbm}}
In this and the next section, we investigate the reverse structure by following the classical method. We prove the Calder\'on--Zygmund decomposition with respect to the doubling measure $\nu_\lambda$, and use this decomposition to show the reverse type inequality. 
\begin{defn}[Dyadic Maximal Function]~\\
Let $\mathcal{D}_k$ be a family of dyadic intervals on $\mathbb{R_+}$ with side length $2^{-k}$, and $f\in L^1_{loc}(\mathbb{R_+},\nu_\lambda)$, the dyadic maximal function of $f$ is defined by
$$
\mathcal{M}^d_\lambda f(x):=\sup_{k} |\mathbb{E}_{k,\lambda}f(x)|,
$$
where
$$
\mathbb{E}_{k,\lambda}f(x):=\sum_{B\in\mathcal{D}_k}\bigg(\frac{1}{\nu_\lambda (B)}\int_B |f|d\nu_\lambda\bigg)\mathbbm{1}_B (x).
$$
\end{defn}

\begin{lem}[Stopping Time Lemma]~\\
The dyadic maximal function is of weak $(1,1)$, that is
$$
\|\mathcal M^d_\lambda f\|_{L^{1,\infty}(R_+,\nu_\lambda)} \leq C \|f\|_{L^1(R_+,\nu_\lambda)}.
$$
\end{lem}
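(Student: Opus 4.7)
The plan is to imitate the classical weak-type $(1,1)$ argument for the dyadic Hardy--Littlewood maximal function, reading averages with respect to the doubling measure $\nu_\lambda$ rather than Lebesgue measure. No reverse H\"older or $A_p$ machinery should be needed; just a stopping-time selection of dyadic intervals.

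Fix $\alpha>0$ and set
\[
E_\alpha := \bigl\{x\in\mathbb{R}_+ : \mathcal{M}^d_\lambda f(x) > \alpha\bigr\}.
\]
By definition of $\mathcal{M}^d_\lambda$, for each $x\in E_\alpha$ there is at least one dyadic interval $B\ni x$ with $\frac{1}{\nu_\lambda(B)}\int_B|f|\,d\nu_\lambda>\alpha$. The first step is to produce a pairwise disjoint stopping-time family $\mathcal{F}$ of such intervals. For each $x\in E_\alpha$, among the dyadic ancestors of $x$ on which the $\nu_\lambda$-average of $|f|$ exceeds $\alpha$, I would pick the \emph{maximal} one, call it $B(x)$. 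Such a maximal interval exists because $f\in L^1(\mathbb{R}_+,\nu_\lambda)$ while $\nu_\lambda$ of the dyadic ancestors of any fixed interval tends to $\infty$ (using $\lambda>-1/2$, so $\nu_\lambda((0,R))\simeq R^{2\lambda+2}\to\infty$), forcing the average to drop below $\alpha$ on sufficiently coarse ancestors. Since in a dyadic grid any two intervals are either nested or disjoint, maximality forces distinct $B(x)$'s to be pairwise disjoint, so $\mathcal{F}:=\{B(x):x\in E_\alpha\}$ is a countable pairwise disjoint collection.

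By construction $E_\alpha\subset\bigcup_{B\in\mathcal{F}}B$, and for each $B\in\mathcal{F}$ the stopping inequality gives $\nu_\lambda(B)<\alpha^{-1}\int_B|f|\,d\nu_\lambda$. Summing over the disjoint family yields
\[
\nu_\lambda(E_\alpha)\;\leq\;\sum_{B\in\mathcal{F}}\nu_\lambda(B)\;<\;\frac{1}{\alpha}\sum_{B\in\mathcal{F}}\int_B|f|\,d\nu_\lambda\;\leq\;\frac{1}{\alpha}\int_{\mathbb{R}_+}|f|\,d\nu_\lambda,
\]
which is the desired weak-type $(1,1)$ inequality (in fact with constant $C=1$).

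The only mild obstacle is set-up rather than substance: one has to fix a convention for the dyadic grid on $\mathbb{R}_+$ (e.g.\ $((j-1)2^{-k},j2^{-k}]$ with $j\geq 1$, $k\in\mathbb{Z}$) and verify that every $x\in E_\alpha$ really admits a well-defined maximal stopping interval. This is where the $L^1$ hypothesis on $f$ together with the infinite-measure growth of coarse dyadic ancestors under $\nu_\lambda$ enter; the remainder of the argument is the standard Calder\'on--Zygmund selection and additivity over a disjoint family.
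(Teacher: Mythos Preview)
Your proposal is correct and is essentially the same argument as the paper's: the paper decomposes the superlevel set as $\bigsqcup_k \Omega_k$ where $\Omega_k=\{\mathbb{E}_{k,\lambda}f>\alpha,\ \mathbb{E}_{i,\lambda}f\le\alpha\ \forall i<k\}$, which is exactly your maximal stopping-time family $\mathcal{F}$ indexed by scale, and then uses $\int_{\Omega_k}\mathbb{E}_{k,\lambda}f\,d\nu_\lambda=\int_{\Omega_k}|f|\,d\nu_\lambda$ in place of your direct inequality $\nu_\lambda(B)<\alpha^{-1}\int_B|f|\,d\nu_\lambda$. Your added care about the existence of a maximal ancestor (via $\nu_\lambda((0,R))\to\infty$ when $\lambda>-1/2$) is a point the paper leaves implicit.
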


\bigskip

\begin{proof}
Given any $\alpha>0$, we consider the set $$\Omega_k:=\left\{\mathbb{E}_{k,\lambda}f>\alpha,
\mathbb{E}_{i,\lambda}f\leq\alpha, \forall i< k
\right\},\quad \forall k.$$
Then the level set of dyadic maximal function can be decomposed by
$$
\{\mathcal M^d_\lambda f>\alpha\}=\bigsqcup_{k}\Omega_k,
$$
and hence
\begin{align*}
\nu_\lambda(\{\mathcal M^d_\lambda f>\alpha\})=\sum_{k}\nu_\lambda (\Omega_k)&\leq \frac{1}{\alpha}\sum_{k}\int_{\Omega_k}\mathbb{E}_{k,\lambda}f(x)d\nu_\lambda\\
&=\frac{1}{\alpha}\sum_{k}\int_{\Omega_k}|f|d\nu_\lambda.
\end{align*}
Thus, the dyadic maximal function is of weak $(1,1)$.
\end{proof}
By the stopping time lemma above, we have the Calder\'on--Zygmund decomposition with the doubling constant $4^{\lambda+1}$ in this setting:
\begin{thm}[Calder\'on--Zygmund Decomposition]~\\
For all $f\in L^1(\mathbb{R_+},\nu_\lambda)$ and let $\alpha>0$ be given. Then there exists a collection of disjoint dyadic intervals $B_k$ such that
$$
|f(x)|\leq \alpha,\quad\forall x\notin\bigcup_k B_k;\quad \nu_\lambda\bigg(\bigcup_k B_k\bigg)\leq\frac{1}{\alpha}\|f\|_{L^1(R_+,\nu_\lambda)}
$$
and
$$
\alpha<\frac{1}{\nu_\lambda(B_k)}\int_{B_k}|f|d\nu_\lambda\leq 4^{\lambda+1}\alpha.
$$
\end{thm}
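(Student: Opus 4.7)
The plan is to mimic the classical Calderón--Zygmund stopping-time argument, but carried out with respect to the dyadic family on $\R_+$ and the doubling measure $d\nu_\lambda(t)=t^{2\lambda+1}dt$. The numerical constant $4^{\lambda+1}$ will arise from bounding the ratio $\nu_\lambda(\hat B)/\nu_\lambda(B)$ for a dyadic interval $B$ and its dyadic parent $\hat B$. First I would verify this dyadic doubling bound: writing $\hat B=[c,c+2\ell]$ with child $B=[c,c+\ell]$ (or $[c+\ell,c+2\ell]$), the ratio
\[
\frac{\nu_\lambda(\hat B)}{\nu_\lambda(B)}=\frac{(c+2\ell)^{2\lambda+2}-c^{2\lambda+2}}{(c+\ell)^{2\lambda+2}-c^{2\lambda+2}}
\]
(and its sibling analogue) is maximized in the limit $c\to 0^+$, giving $2^{2\lambda+2}=4^{\lambda+1}$. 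So every dyadic interval on $\R_+$ is $4^{\lambda+1}$-doubled by its dyadic parent.

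Next I would perform the stopping-time selection. Fix $f\in L^1(\R_+,\nu_\lambda)$ and $\alpha>0$. Because $\nu_\lambda$ gives every dyadic interval finite mass only when it is bounded, but averages of $|f|$ over dyadic intervals containing a fixed $x$ tend to $0$ as the interval grows (by $\|f\|_{L^1(\nu_\lambda)}<\infty$), for each $x$ with $\mathcal{M}^d_\lambda f(x)>\alpha$ there is a \emph{largest} dyadic interval $B(x)\ni x$ for which
\[
\frac{1}{\nu_\lambda(B(x))}\int_{B(x)}|f|\,d\nu_\lambda>\alpha.
\]
Let $\{B_k\}$ denote the collection of such maximal dyadic intervals as $x$ ranges over the level set. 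By maximality of each $B_k$, the family is pairwise disjoint (two dyadic intervals are either nested or disjoint, and nested ones contradict maximality).

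With $\{B_k\}$ in hand, the three conclusions follow in order. The lower bound in the last displayed estimate holds by construction. For the upper bound, let $\hat B_k$ be the dyadic parent of $B_k$; maximality forces $\frac{1}{\nu_\lambda(\hat B_k)}\int_{\hat B_k}|f|\,d\nu_\lambda\le\alpha$, so
\[
\frac{1}{\nu_\lambda(B_k)}\int_{B_k}|f|\,d\nu_\lambda\le\frac{\nu_\lambda(\hat B_k)}{\nu_\lambda(B_k)}\cdot\frac{1}{\nu_\lambda(\hat B_k)}\int_{\hat B_k}|f|\,d\nu_\lambda\le 4^{\lambda+1}\alpha,
\]
which is exactly where the explicit constant enters. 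Disjointness then yields the measure estimate
\[
\nu_\lambda\Bigl(\bigcup_k B_k\Bigr)=\sum_k\nu_\lambda(B_k)\le\sum_k\frac{1}{\alpha}\int_{B_k}|f|\,d\nu_\lambda\le\frac{1}{\alpha}\|f\|_{L^1(\R_+,\nu_\lambda)}.
\]

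Finally, for $x\notin\bigcup_k B_k$, every dyadic interval $B\ni x$ has $\frac{1}{\nu_\lambda(B)}\int_{B}|f|\,d\nu_\lambda\le\alpha$ (otherwise $x$ would belong to some $B_k$), hence $\mathcal{M}^d_\lambda f(x)\le\alpha$; applying the Lebesgue differentiation theorem for the doubling measure $\nu_\lambda$ gives $|f(x)|\le\alpha$ for $\nu_\lambda$-a.e.\ such $x$, and equivalently for a.e.\ such $x$ since $\nu_\lambda$ and Lebesgue measure share null sets on $\R_+$. The only genuine obstacle is the explicit dyadic doubling computation; everything else is a transcription of the classical argument with $dx$ replaced by $d\nu_\lambda$ and the Hardy--Littlewood maximal function replaced by $\mathcal{M}^d_\lambda$, whose weak-type $(1,1)$ bound (already proved in the preceding lemma) underwrites the stopping-time step.
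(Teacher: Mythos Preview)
Your proof is correct and follows the same standard stopping-time approach that the paper invokes; the paper itself does not spell out a proof, simply stating that the decomposition follows from the preceding Stopping Time Lemma with doubling constant $4^{\lambda+1}$. Your write-up is more complete in that you actually verify the dyadic parent--child bound $\nu_\lambda(\hat B)/\nu_\lambda(B)\le 4^{\lambda+1}$ (indeed, setting $x=1/(t+1)$ reduces the left-child case to showing $(1+x)^p+(2^p-1)(1-x)^p\le 2^p$ on $[0,1]$ with $p=2\lambda+2>1$, which holds by convexity since equality occurs at both endpoints; the right-child ratio is $\le 2$ by convexity of $t\mapsto t^p$), whereas the paper simply asserts this constant.
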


\begin{thm}[Reverse Type Inequality]~\\
Let $w\in \widetilde{A}_{p,\lambda}$, $1< p <\infty$. Then there exists  $\varepsilon>0$ such that for any interval $B\subset\mathbb{R_+}$,
$$
\bigg(\frac{1}{\nu_\lambda(B)}\int_B (w\nu^{-1}_\lambda)^{1+\varepsilon}\,d\nu_\lambda\bigg )^{1/(1+\varepsilon)}\underset{p,w}{\lesssim}\frac{1}{\nu_\lambda(B)}\int_B w \nu^{-1}_\lambda d\nu_\lambda. 
$$
\end{thm}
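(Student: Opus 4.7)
The plan is to recognize the statement as the classical reverse Hölder inequality transplanted to the space of homogeneous type $(\mathbb{R}_+, |\cdot|, d\nu_\lambda)$. Set $\tilde{w}(t) := w(t)/\nu_\lambda(t)$. The observation made just before the definition of $\widetilde{A}_{p,\lambda}$ in the paper says that $w\in \widetilde{A}_{p,\lambda}$ is equivalent to $\tilde{w}$ satisfying the classical $A_p$ condition with respect to the doubling measure $d\nu_\lambda$. Under this substitution the desired estimate rewrites cleanly as
$$
\bigg(\frac{1}{\nu_\lambda(B)}\int_B \tilde w^{1+\varepsilon}\, d\nu_\lambda\bigg)^{1/(1+\varepsilon)} \lesssim \frac{1}{\nu_\lambda(B)}\int_B \tilde w\, d\nu_\lambda,
$$
which is exactly the usual reverse Hölder property for $A_p$ weights. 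Everything that follows is a quantitative version of that classical argument, now carried out with the Calderón--Zygmund decomposition established in the previous section.

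First I would fix an interval $B\subset\mathbb{R}_+$ and, by homogeneity, normalize $\tilde w_{B,\nu_\lambda}=1$. For each height $\alpha>1$, apply the Calderón--Zygmund theorem (with doubling constant $4^{\lambda+1}$) to $\tilde w\,\chi_B$ at level $\alpha$ to produce a disjoint family $\{B_k^\alpha\}$ of subintervals of $B$ satisfying
$$
\alpha<\frac{1}{\nu_\lambda(B_k^\alpha)}\int_{B_k^\alpha}\tilde w\,d\nu_\lambda\le 4^{\lambda+1}\alpha, \qquad \tilde w(x)\le\alpha \ \text{for $\nu_\lambda$-a.e. }x\in B\setminus\bigcup_k B_k^\alpha,
$$
together with the weak-type control $\sum_k\nu_\lambda(B_k^\alpha)\le \alpha^{-1}\int_B\tilde w\,d\nu_\lambda$.

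Next I would use the $\widetilde A_{p,\lambda}$ (i.e. $A_p$ for $\tilde w$) condition as a self-improvement. The standard argument, applying the $A_p$ condition with test function $\chi_E$ and Hölder's inequality, produces constants $\delta,\eta\in(0,1)$ depending only on $[\tilde w]_{A_p}$ and $4^{\lambda+1}$ such that for any measurable $E\subset B_k^\alpha$,
$$
\frac{\nu_\lambda(E)}{\nu_\lambda(B_k^\alpha)}<\delta \ \Longrightarrow\ \int_E\tilde w\,d\nu_\lambda<\eta\int_{B_k^\alpha}\tilde w\,d\nu_\lambda.
$$
This is the classical $A_p\Rightarrow A_\infty$ implication. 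Iterating the Calderón--Zygmund decomposition at doubled heights $\alpha_m=(4^{\lambda+1}/\eta)^m$ then yields the exponential decay
$$
\int_{\{\tilde w>\alpha_m\}\cap B}\tilde w\,d\nu_\lambda \le \eta^m \int_B\tilde w\,d\nu_\lambda,
$$
and summing the resulting geometric series via the layer-cake representation gives, for any $\varepsilon>0$ small enough that $\alpha_m^{\varepsilon}\eta^m$ is summable,
$$
\int_B\tilde w^{1+\varepsilon}\,d\nu_\lambda \lesssim \bigl(\tilde w_{B,\nu_\lambda}\bigr)^{\varepsilon}\int_B\tilde w\,d\nu_\lambda,
$$
which is equivalent to the claim.

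The main obstacle I anticipate is the $A_p\Rightarrow A_\infty$ step: turning the $\widetilde A_{p,\lambda}$ inequality for $w$ into a quantitative absolute-continuity estimate between $d\nu_\lambda$ and $\tilde w\, d\nu_\lambda$ on each stopping interval. Once that is in hand, the Calderón--Zygmund iteration and layer-cake sum are routine, and the explicit doubling constant $4^{\lambda+1}$ from the Calderón--Zygmund theorem makes every step quantitative in $\lambda$.
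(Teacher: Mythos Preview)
Your proposal is correct and follows essentially the same route as the paper: the paper first proves the $A_p\Rightarrow A_\infty$ lemma (small $\nu_\lambda$-measure implies small $w$-measure), then runs the Calder\'on--Zygmund decomposition at geometrically increasing heights $\alpha_k$ and sums the resulting pieces as a geometric series, exactly as you outline. The only cosmetic differences are that the paper works directly with $w$ rather than introducing $\tilde w=w\nu_\lambda^{-1}$, and it writes the final estimate as a sum over $\Omega_k\setminus\Omega_{k+1}$ rather than via the layer-cake formula.
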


\bigskip

To prove this result, we first need to prove the following lemma.
\begin{lem}~\\
 Let $w\in \widetilde{A}_{p,\lambda}$, $1< p<\infty$. Then for every $\alpha\in(0,1)$, there exists $\beta\in(0,1)$, such that for any interval $B$ and $S\subset B$ 
 $$\nu_\lambda(S)\leq \alpha \nu_\lambda(B)\implies w(S)=\int_S w(\nu_\lambda)^{-1}d\nu_\lambda\leq\beta \int_B w(\nu_\lambda)^{-1}d\nu_\lambda=\beta w(B).
 $$
\end{lem}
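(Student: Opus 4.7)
The plan is to reduce the claim to the classical statement for a Muckenhoupt $A_p$ weight with respect to the doubling measure $d\nu_\lambda$, and then apply Hölder's inequality with a complement trick. Set $u(t):=w(t)/\nu_\lambda(t)$. As the authors already observed in the discussion following Definition~\ref{defaplt}, the hypothesis $w\in\widetilde{A}_{p,\lambda}$ is exactly the statement that
\[
\left(\frac{1}{\nu_\lambda(B)}\int_B u\,d\nu_\lambda\right)\left(\frac{1}{\nu_\lambda(B)}\int_B u^{-\frac{1}{p-1}}\,d\nu_\lambda\right)^{p-1}\leq C
\]
for every interval $B\subset \R_+$, i.e.\ $u\in A_p(d\nu_\lambda)$. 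Moreover, the quantity the lemma calls $w(S)$ satisfies $w(S)=\int_S w\,\nu_\lambda^{-1}\,d\nu_\lambda=\int_S u\,d\nu_\lambda$, so the conclusion to prove is the following purely $A_p$-on-$\nu_\lambda$ statement: if $E\subset B$ and $\nu_\lambda(E)\geq(1-\alpha)\nu_\lambda(B)$, then $\int_E u\,d\nu_\lambda\geq(1-\beta)\int_B u\,d\nu_\lambda$.

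For any measurable $E\subset B$, Hölder's inequality gives
\[
\nu_\lambda(E)=\int_E u^{1/p}u^{-1/p}\,d\nu_\lambda\leq\left(\int_E u\,d\nu_\lambda\right)^{1/p}\left(\int_E u^{-\frac{1}{p-1}}\,d\nu_\lambda\right)^{(p-1)/p}.
\]
Enlarging $E$ to $B$ in the second factor and invoking the $\widetilde{A}_{p,\lambda}$ condition to replace $\int_B u^{-1/(p-1)}d\nu_\lambda$ by $C\,\nu_\lambda(B)^{p}\big/\int_B u\,d\nu_\lambda$, one obtains the key one-sided comparison
\[
\left(\frac{\nu_\lambda(E)}{\nu_\lambda(B)}\right)^p\leq C\,\frac{\int_E u\,d\nu_\lambda}{\int_B u\,d\nu_\lambda}.
\]

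Now apply this with $E=B\setminus S$. The hypothesis $\nu_\lambda(S)\leq\alpha\,\nu_\lambda(B)$ gives $\nu_\lambda(B\setminus S)\geq(1-\alpha)\nu_\lambda(B)$, so
\[
\frac{\int_{B\setminus S} u\,d\nu_\lambda}{\int_B u\,d\nu_\lambda}\geq C^{-1}(1-\alpha)^p,
\]
and therefore
\[
\frac{w(S)}{w(B)}=\frac{\int_S u\,d\nu_\lambda}{\int_B u\,d\nu_\lambda}\leq 1-C^{-1}(1-\alpha)^p=:\beta.
\]
Since $\alpha<1$ and $C$ is finite, $\beta\in(0,1)$, which is the desired conclusion. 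There is no serious obstacle: the whole argument is the standard $A_p$--Hölder passage on $(\R_+,d\nu_\lambda)$, and the only thing one has to get right is the dictionary between the $\widetilde{A}_{p,\lambda}$ condition, the classical $A_p$ condition for $u=w/\nu_\lambda$, and the identification $w(S)=\int_S u\,d\nu_\lambda$, all of which are immediate from the definitions.
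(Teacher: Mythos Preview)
Your proof is correct and follows essentially the same approach as the paper: both derive the one-sided comparison $\left(\nu_\lambda(E)/\nu_\lambda(B)\right)^p\leq C\,w(E)/w(B)$ from the $\widetilde{A}_{p,\lambda}$ condition (the paper states the equivalent inequality $w(B)\big(\tfrac{1}{\nu_\lambda(B)}\int_B |f|\,d\nu_\lambda\big)^p\leq C\int_B|f|^p\,dw$ and takes $f=\chi_E$, while you spell out the H\"older step), then apply it to $E=B\setminus S$ to obtain the identical constant $\beta=1-C^{-1}(1-\alpha)^p$.
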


\bigskip

\begin{proof}
For $w\in \widetilde{A}_{p,\lambda}$, the inequality always holds:
\begin{align*}
    w(B)\left(\frac{1}{\nu_\lambda(B)}\int_B |f|d\nu_\lambda\right)^p\leq C\cdot \int_B |f|^p dw,
\end{align*}
which implies that
$$
    w(B)\left(\frac{\nu_\lambda(S)}{\nu_\lambda(B)}\right)^p\leq C\cdot w(S),
$$
If we replace $S$ by $B \setminus S$ we get
\begin{align*}
w(B)\left(1-\frac{\nu_\lambda(S)}{\nu_\lambda(B)}\right)^p\leq C\cdot(w(B)-w(S)).
\end{align*}
Since $\nu_\lambda (S)\leq \alpha \nu_\lambda (B)$, then
\begin{align*}
 w(B)(1-\alpha)^p\leq C\left(w(B)-w(S) \right)\Longleftrightarrow w(S)\leq \frac{C-(1-\alpha)^p}{C}w(B), 
\end{align*}
which gives us the desired result with $\beta = 1-C^{-1}(1-\alpha)^p\in(0,1)$.
\end{proof}

\bigskip

\begin{proof}(Reverse Type Inequality)\,
Fix an interval $B\subset\mathbb{R_+}$ and consider an increasing sequence 
$$\frac{1}{\nu_\lambda(B)}\int_B w(\nu_\lambda)^{-1}d\nu_\lambda: = \alpha_0<\alpha_1<\cdots<\alpha_k<\cdots.
$$
By Calder\'on--Zygmund decomposition, for all $k$, there exists $\{B^k_m\}_m$ such that
\begin{align*}
w(x)\nu_\lambda(x)^{-1}\leq \alpha_k,\ \forall x\notin \bigcup_m B^k_m;\quad \text{and}\quad \alpha_k<\frac{1}{\nu_\lambda(B^k_m)}\int_{B^k_m}w\nu^{-1}_\lambda d\nu_\lambda\leq 4^{\lambda+1}\alpha_k.
\end{align*}
It is clear from the construction that $\Omega_{k+1}\subset\Omega_{k}$, where $\Omega_k:=\bigcup_m B^k_m.$ If we fix $B^k_m$ from the Calder\'on--Zygmund decomposition at height $\alpha_k$, then $B^k_m\cap \Omega_{k+1}$ is the union of cubes $B^{k+1}_n$, with $n\in\Gamma_m$ from the decomposition at height $\alpha_{k+1}$. Therefore,
\begin{align*}
\nu_\lambda\left(B^k_m\cap \Omega_{k+1}\right)=\sum_{n\in\Gamma_m}\nu_\lambda(B^{k+1}_n)
&\leq \frac{1}{\alpha_{k+1}}\sum_{n\in\Gamma_m}\int_{B^{k+1}_n}w\nu^{-1}_\lambda d\nu_\lambda \\
&\leq \frac{1}{\alpha_{k+1}}\int_{B^k_m}w \nu^{-1}_\lambda d\nu_\lambda\leq \frac{4^{\lambda+1}\alpha_k}{\alpha_{k+1}}\nu_\lambda(B^K_m).
\end{align*}
Fix $\alpha\in(0,1)$ and choose the $\alpha_k$'s so that $$\frac{4^{\lambda+1}\alpha_k}{\alpha_{k+1}} = \alpha\implies \nu_\lambda(\Omega_{k+1})\leq\alpha\nu_\lambda(\Omega_k),\quad \forall k,
$$
and hence there exists $\beta\in(0,1)$ such that
\begin{align*}
\int_{\Omega_{k+1}}w\nu^{-1}_\lambda d\nu_\lambda\leq \beta\int_{\Omega_k}w\nu^{-1}_\lambda d\nu_\lambda\leq\cdots\leq\beta^k \int_{\Omega_0}w \nu^{-1}_\lambda d\nu_\lambda.
\end{align*}
Therefore,
\begin{align*}
\frac{1}{\nu_\lambda(B)}\int_B (w\nu^{-1}_\lambda)^{1+\varepsilon}d\nu_\lambda&=\frac{1}{\nu_\lambda(B)}\int_{B\setminus\Omega_0}(w\nu^{-1}_\lambda)^{1+\varepsilon}d\nu_\lambda+\frac{1}{\nu_\lambda(B)}\sum_{k=1}^\infty\int_{\Omega_k\setminus\Omega_{k+1}}(w\nu^{-1}_\lambda)^{1+\varepsilon}d\nu_\lambda\\
&\leq \alpha_0^\epsilon\frac{1}{\nu_\lambda(B)}\int_B w\nu^{-1}_\lambda d\nu_\lambda +\frac{1}{\nu_\lambda(B)}\sum_{k=1}^\infty\alpha_{k+1}^\varepsilon\cdot \int_{\Omega_k}w\nu^{-1}_\lambda d\nu_\lambda\\
&=\alpha_0^\epsilon\cdot\frac{1}{\nu_\lambda(B)}\int_B w\nu^{-1}_\lambda d\nu_\lambda+\frac{1}{\nu_\lambda(B)}\sum_{k=1}^\infty \alpha_{k+1}^\varepsilon\cdot\beta^{k-1}\cdot \int_{\Omega_0}w\nu^{-1}_\lambda d\nu_\lambda\\
&\leq\alpha_0^\epsilon\cdot\frac{1}{\nu_\lambda(B)}\int_B w\nu^{-1}_\lambda d\nu_\lambda\bigg(1+\sum_{k=1}^\infty(\frac{4^{\lambda+1}}{\alpha})^{(k+1)\varepsilon}\beta^{k-1}\bigg).
\end{align*}
Choose $\varepsilon>0$ small enough such that the infinite sum converges, then we have the desired result.
\end{proof}

\begin{cor}[Ratio Testing]~\\
Let $1<p<\infty$ and $w\in \widetilde{A}_{p,\lambda}$, then there is $C>0, \delta>0$ so that for all interval $B\subset\R_+$,
$$
A\subset B\implies \frac{w(A)}{w(B)}\leq C\left(\frac{\nu_\lambda(A)}{\nu_\lambda(B)}\right)^\delta.
$$
\end{cor}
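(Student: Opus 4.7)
The plan is to deduce the ratio testing inequality directly from the Reverse Hölder inequality proved immediately above, using a standard Hölder interpolation argument. The key observation that makes everything go smoothly is the identity
$$
w(B)=\int_B w(t)\,dt=\int_B w(t)\nu_\lambda(t)^{-1}\,d\nu_\lambda(t),
$$
which rewrites the Lebesgue integral of $w$ as a $d\nu_\lambda$-integral of the ``density'' $w\nu_\lambda^{-1}$. This is precisely the quantity for which the Reverse Hölder inequality is formulated.

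First, I would fix an interval $B\subset\R_+$ and a measurable subset $A\subset B$, and pick $\varepsilon>0$ as furnished by the Reverse Hölder inequality for the weight $w\in\widetilde{A}_{p,\lambda}$. Applying Hölder's inequality on $(A,d\nu_\lambda)$ with conjugate exponents $1+\varepsilon$ and $(1+\varepsilon)/\varepsilon$, I get
$$
w(A)=\int_A w\,\nu_\lambda^{-1}\,d\nu_\lambda
\leq \Bigl(\int_A (w\nu_\lambda^{-1})^{1+\varepsilon}\,d\nu_\lambda\Bigr)^{\!1/(1+\varepsilon)}\nu_\lambda(A)^{\varepsilon/(1+\varepsilon)}.
$$

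Next, since $A\subset B$, the inner integral is bounded by the corresponding integral over $B$, to which the Reverse Hölder inequality applies:
$$
\Bigl(\int_B (w\nu_\lambda^{-1})^{1+\varepsilon}\,d\nu_\lambda\Bigr)^{\!1/(1+\varepsilon)}
\lesssim \nu_\lambda(B)^{1/(1+\varepsilon)}\cdot\frac{w(B)}{\nu_\lambda(B)}
=\frac{w(B)}{\nu_\lambda(B)^{\varepsilon/(1+\varepsilon)}}.
$$
Combining these two estimates yields
$$
\frac{w(A)}{w(B)}\lesssim \Bigl(\frac{\nu_\lambda(A)}{\nu_\lambda(B)}\Bigr)^{\!\varepsilon/(1+\varepsilon)},
$$
so setting $\delta:=\varepsilon/(1+\varepsilon)\in(0,1)$ gives the desired inequality with a constant $C$ depending only on the one from the Reverse Hölder inequality.

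There is essentially no obstacle here: the heavy lifting is the Reverse Hölder estimate, which was established via the Calder\'on--Zygmund stopping-time argument in the preceding section. The only point that requires a small sanity check is the rewriting $w(B)=\int_B w\nu_\lambda^{-1}\,d\nu_\lambda$, which aligns the Lebesgue-based definition of $w(\cdot)$ with the $d\nu_\lambda$-based formulation of the Reverse Hölder inequality; once this is noted, the rest is a clean two-line Hölder computation.
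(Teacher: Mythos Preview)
Your proof is correct and follows essentially the same route as the paper: apply H\"older's inequality to $w(A)=\int_A w\nu_\lambda^{-1}\,d\nu_\lambda$ with exponents $1+\varepsilon$ and $(1+\varepsilon)/\varepsilon$, enlarge the domain from $A$ to $B$, invoke the Reverse H\"older inequality, and set $\delta=\varepsilon/(1+\varepsilon)$. The paper's argument is line-for-line the same computation.
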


\bigskip

\begin{proof}
Suppose $w$ satisfies the reverse type inequality with exponent $1+\varepsilon$. Then
 \begin{align*}
 \int_A w (\nu_\lambda)^{-1} d\nu_\lambda &\leq\nu_\lambda(A)^{\frac{\varepsilon}{1+\varepsilon}}\cdot\left(\int_A (w\nu^{-1}_\lambda)^{1+\varepsilon} d\nu_\lambda\right)^{\frac{1}{1+\varepsilon}}\\
&=\nu_\lambda(A)^{\frac{\varepsilon}{1+\varepsilon}} \nu_\lambda(B)^{\frac{1}{1+\varepsilon}}\cdot\left(\frac{1}{\nu_\lambda(B)}\int_B (w\nu^{-1}_\lambda)^{1+\varepsilon}d\nu_\lambda\right)^{\frac{1}{1+\varepsilon}}\\
&\leq C\nu_\lambda(A)^{\frac{\varepsilon}{1+\varepsilon}} \nu_\lambda(B)^{\frac{1}{1+\varepsilon}}\cdot\left(\frac{1}{\nu_\lambda(B)}\int_B w \right).
\end{align*}
This gives that if 
$ \delta := \frac{\varepsilon}{1+\varepsilon}$, then we have
$$ A\subset B\implies \frac{w(A)}{w(B)}\leq C\left(\frac{\nu_\lambda(A)}{\nu_\lambda(B)}\right)^\delta.
$$
The proof is complete.
\end{proof}

\bigskip

\begin{thm}[$L^p(\R_+,w)$-Boundedness for $\cm_\lambda$]~\\
Let $1<p<\infty$, and $f\in L^p(\R_+,w)$, then
$$
w\in\widetilde{A}_{p,\lambda}\Longleftrightarrow\|\cm_\lambda f\|_{L^p(\R_+,w)}\leq C\|f\|_{L^p(\R_+,w)}.$$
\end{thm}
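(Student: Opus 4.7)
The plan is to reduce the desired equivalence to the classical weighted Hardy--Littlewood maximal theorem on the space of homogeneous type $(\R_+,|\cdot|,d\nu_\lambda)$, via the substitution identity highlighted after Theorem~\ref{thmsbm}:
\begin{align*}
\int_{\R_+}(\cm_\lambda f)(t)^{p}\,w(t)\,dt \;=\; \int_{\R_+}(\cm_\lambda f)(t)^{p}\,\frac{w(t)}{\nu_\lambda(t)}\,d\nu_\lambda(t).
\end{align*}
As noted in the introduction, the condition $w\in\widetilde{A}_{p,\lambda}$ is precisely the classical $A_p$ condition for the weight $w/\nu_\lambda$ with respect to the doubling measure $d\nu_\lambda$, since $(w/\nu_\lambda)^{-1/(p-1)}\,\nu_\lambda = t^{(2\lambda+1)p'}w^{-1/(p-1)}$.

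For the converse direction $(\Longleftarrow)$, the strong-type $(p,p)$ inequality implies the weak-type $(p,p)$ inequality by Chebyshev. Testing this weak-type bound on the function $f(x)=w(x)^{1-p'}x^{(2\lambda+1)/(p-1)}\mathbbm{1}_B(x)$ over an arbitrary interval $B\subset\R_+$, exactly as carried out in Section~5, recovers the defining inequality of $\widetilde{A}_{p,\lambda}$. No new work is required here.

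For the forward direction $(\Longrightarrow)$, the plan is to combine the weak-type $(p,p)$ bound of Section~5 with a self-improvement step. By the reverse Hölder inequality of Section~6 applied to $w\nu_\lambda^{-1}$ with respect to $d\nu_\lambda$, a standard open-up argument produces $\varepsilon>0$ such that $w\in\widetilde{A}_{p-\varepsilon,\lambda}$. Applying the Section~5 weak-type estimate at the lowered exponent $p-\varepsilon$, together with the trivial bound $\|\cm_\lambda f\|_{L^\infty}\leq\|f\|_{L^\infty}$, Marcinkiewicz interpolation yields the strong-type $(p,p)$ estimate on $L^p(\R_+,w)$.

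The principal obstacle is precisely the self-improvement implication $\widetilde{A}_{p,\lambda}\Rightarrow\widetilde{A}_{p-\varepsilon,\lambda}$, and this is exactly what the reverse Hölder inequality of Section~6 is designed to supply. The remaining work is routine bookkeeping: checking that the $(1{+}\varepsilon)$-gain for $w/\nu_\lambda$ in the classical $A_p$ sense corresponds to lowering the exponent from $p$ to $p-\varepsilon$ in the $\widetilde{A}_{p,\lambda}$ scale (a direct calculation using the same substitution identity, together with the dual-factor symmetry $w\in\widetilde{A}_{p,\lambda}\Leftrightarrow t^{(2\lambda+1)p'}w^{-1/(p-1)}\in\widetilde{A}_{p',\lambda}$ established earlier), and then running standard Marcinkiewicz interpolation. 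No new analytical ideas beyond Sections~5 and~6 are needed.
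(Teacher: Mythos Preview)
Your proposal is correct and follows essentially the same route as the paper: reduce to the weak-type estimate of Section~5, use the reverse H\"older inequality of Section~6 together with the dual-factor symmetry to obtain the self-improvement $w\in\widetilde{A}_{p,\lambda}\Rightarrow w\in\widetilde{A}_{q,\lambda}$ for some $q<p$, and then interpolate. One small point of precision: the reverse H\"older inequality must be applied to the \emph{dual} weight $\sigma(t)=t^{(2\lambda+1)p'}w(t)^{-1/(p-1)}\in\widetilde{A}_{p',\lambda}$ (not to $w\nu_\lambda^{-1}$ directly), since it is the exponent on $w^{-1/(p-1)}$ that needs to be raised in order to drop from $p$ to $q$; you acknowledge the dual-factor symmetry in your bookkeeping paragraph, so the argument goes through, and this is exactly how the paper proceeds.
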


\begin{proof}
By interpolation and the weak type estimate, it suffices to show that if $w \in \widetilde{A}_{p,\lambda}$, then $w \in \widetilde{A}_{q,\lambda}$ for some $q < p$.\\
Suppose $w\in\widetilde{A}_{p,\lambda}$, which is equivalent to $t^{(2\lambda+1)p^{'}}w(t)^{\frac{-1}{p-1}}\in\widetilde{A}_{p^{'},\lambda}$. Then by Reverse H\"older Inequality, we have for all $B$
$$
\left(\frac{1}{\nu_\lambda(B)}\int_B (t^{(2\lambda+1)p^{'}}w(t)^{\frac{-1}{p-1}}\nu^{-1}_\lambda)^{1+\varepsilon}d\nu_\lambda\right)^{\frac{p-1}{1+\varepsilon}}\leq C\left(\frac{1}{\nu_\lambda(B)}\int_B t^{(2\lambda+1)p^{'}}w(t)^{\frac{-1}{p-1}} dt \right)^{p-1}.
$$
Let $q-1:=\frac{p-1}{1+\varepsilon}$, then 
\begin{align*}
(t^{(2\lambda+1)p^{'}}w(t)^{\frac{-1}{p-1}}\nu^{-1}_\lambda)^{1+\varepsilon}
=w(t)^{\frac{-1}{q-1}} \cdot t^{(2\lambda+1)\frac{1}{q-1} },
\end{align*}
and hence
$$
\left(\frac{1}{\nu_\lambda(B)}\int_B t^{2(\lambda+1)q^{'}}w(t)^{\frac{-1}{q-1}}dt\right)^{q-1}\leq C\left(\frac{1}{\nu_\lambda(B)}\int_B t^{2(\lambda+1)p^{'}}w(t)^{\frac{-1}{p-1}} dt \right)^{p-1}.
$$
Thus,
$$
\left(\frac{1}{\nu_\lambda(B)}\int_B w(t) dt \right)\cdot \left(\frac{1}{\nu_\lambda(B)}\int_B t^{2(\lambda+1)q^{'}}w(t)^{\frac{-1}{q-1}}dt\right)^{q-1}\leq C,
$$
which implies that
$$
\left(\frac{1}{\nu_\lambda(B)}\int_B w(t) dt\right) \cdot \left(\frac{1}{\nu_\lambda(B)}\int_B t^{2(\lambda+1)q^{'}}w(t)^{\frac{-1}{q-1}}dt\right)^{q-1}\leq C.
$$
Thus, $w\in \widetilde{A}_{q,\lambda}$, for some $q<p.$
\end{proof}

\bigskip

\section{Characterization of the class $A_{p,\lambda}$ and the Reverse Type Inequality, $1<p<\infty$}

In this section, we will establish the reverse structure of $A_{p,\lambda}.$

\begin{thm}[Characterization of $A_{p,\lambda}$]~\\
For $1<p<\infty$, the weight $w$ is in $A_{p,\lambda}$ if and only if 
$$
    \mu(B)\left(\frac{1}{\nu_\lambda(B)}\int_B |f(t)| d\nu_\lambda (t)\right)^p\leq C\cdot \int_B |f|^p(t) d\mu(t),
$$
for all interval $B$ and $f\in L^p(\R_+,\mu)$.
\end{thm}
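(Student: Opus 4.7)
The plan is to prove the two directions by recognizing the testing inequality as precisely the self-improving form of the Hölder-type reformulation of $A_{p,\lambda}$, using the fact that $d\mu(t)=t^p w(t)\,dt$ and $d\nu_\lambda(t)=t^{1+2\lambda}\,dt$ and that (up to the constant $2(\lambda+1)$) $\nu_\lambda(B)=\tfrac{1}{2(\lambda+1)}(b^{2(\lambda+1)}-a^{2(\lambda+1)})$ for $B=(a,b)$. Hence the defining inequality of $A_{p,\lambda}$ is equivalent to
\[
\mu(B)\Bigl(\int_B t^{2\lambda p'}w(t)^{-\frac{1}{p-1}}\,dt\Bigr)^{p-1}\le C\,\nu_\lambda(B)^p,
\]
which is the form I will match against.

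For the forward direction ($\Rightarrow$), I would factor the integrand of the $\nu_\lambda$-average as $t^{1+2\lambda}=\bigl(t\,w(t)^{1/p}\bigr)\cdot\bigl(t^{2\lambda}w(t)^{-1/p}\bigr)$ and apply Hölder with exponents $p,p'$:
\[
\int_B |f(t)|\,d\nu_\lambda(t)\le\Bigl(\int_B |f|^p\,d\mu\Bigr)^{1/p}\Bigl(\int_B t^{2\lambda p'}w(t)^{-\frac{1}{p-1}}\,dt\Bigr)^{1/p'}.
\]
Raising this to the $p$-th power, dividing by $\nu_\lambda(B)^p$, multiplying by $\mu(B)$, and invoking the $A_{p,\lambda}$ condition to control $\mu(B)\bigl(\int_B t^{2\lambda p'}w^{-\frac{1}{p-1}}\,dt\bigr)^{p-1}$ by $C\,\nu_\lambda(B)^p$ yields the claimed inequality.

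For the converse ($\Leftarrow$), I would plug in the dual extremizer that produces equality in the Hölder step above, namely
\[
f(t):=\chi_B(t)\,t^{\frac{2\lambda p'}{p}-1}\,w(t)^{-\frac{1}{p-1}}.
\]
A direct computation using $\tfrac{p'}{p}+1=p'$ shows that both $\int_B|f|^p\,d\mu$ and $\int_B|f|\,d\nu_\lambda$ equal $I:=\int_B t^{2\lambda p'}w(t)^{-\frac{1}{p-1}}\,dt$. Inserting this into the testing inequality gives $\mu(B)\,I^{p-1}\le C\,\nu_\lambda(B)^p$, which is exactly $A_{p,\lambda}$.

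The only technical nuisance I anticipate is ensuring $f\in L^p(\R_+,\mu)$ when the integral $I$ diverges; I would handle this by truncating, e.g.\ replacing $f$ by $f_N:=\min(f,N)\chi_{B_N}$ for an exhausting sequence $B_N\uparrow B$, applying the hypothesis to $f_N$, and passing to the limit via monotone convergence. This is the step I expect to require the most care but it is otherwise routine; the conceptual content of the proof is the Hölder duality between the two expressions.
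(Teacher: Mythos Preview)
Your proposal is correct and follows essentially the same approach as the paper: the forward direction is the same H\"older factorization $t^{1+2\lambda}=(t\,w^{1/p})(t^{2\lambda}w^{-1/p})$, and for the converse your test function $f(t)=t^{2\lambda p'/p-1}w(t)^{-1/(p-1)}$ is exactly the paper's choice $f(t)=w(t)^{-1/(p-1)}t^{(1+2\lambda)/(p-1)-p'}$ (the exponents coincide), yielding $\int_B|f|^p\,d\mu=\int_B|f|\,d\nu_\lambda=I$ and hence the $A_{p,\lambda}$ inequality. The paper omits the truncation step you flag, so your version is in fact slightly more careful.
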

\bigskip

\begin{proof}
$(\Longrightarrow)$
For all interval $B\subset \R_+$,
\begin{align*}
\frac{1}{\nu_\lambda(B)}\int_I |f(t)| t^{2\lambda+1}dt&\leq \left(\frac{1}{\nu_\lambda(B)}\int_B|f(t)|^p d\mu(t)\right)^{1\over{p}}\left(\frac{1}{\nu_\lambda(B)}\int_B t^{2\lambda p'} w(t)^{\frac{-1}{p-1}}dt\right)^{1\over{p'}}\\
&\leq C \left(\frac{1}{\nu_\lambda(B)}\int_B|f(t)|^pd\mu(t)\right)^{1\over{p}}\left({\mu(B)\over\nu_\lambda(B)}\right)^{-1\over {p}},
\end{align*}
where $\mu(t)=t^p w(t)$.
That is
$$
    \mu(B)\left(\frac{1}{\nu_\lambda(B)}\int_B |f|d\nu_\lambda\right)^p\leq C\cdot \int_B |f|^p d\mu.
$$
$(\Longleftarrow)$
Conversely, we let
$$
f(t):=w(t)^{-1\over{p-1}}t^{{{1+2\lambda}\over{p-1}}-p^{'}},
$$
and hence
$$
\left(\frac{1}{\nu_\lambda(B)}\int_B t^{2\lambda p^{'}}w(t)^{-1\over{p-1}}dt\right)^{p-1} \left({\mu(B)\over{\nu_\lambda (B)}}\right)\leq C.
$$
The proof is complete.
\end{proof}

We also have the following structures for the weights $A_{p,\lambda}$. Recall again $d\mu(t)= t^pw(t)dt. $
\begin{lem}
Let $w\in A_{p,\lambda}$, where $1<p<\infty$. Then for all $\alpha\in (0,1)$, there is $\beta\in (0,1)$, so that for all interval $B$ with $S\subset B$, one has
$$
\nu_\lambda(S)\leq\alpha\nu_\lambda(B)\implies \mu(S)\leq\beta\mu(B).
$$
\end{lem}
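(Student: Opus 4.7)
The plan is to mirror exactly the argument already used in the $\widetilde{A}_{p,\lambda}$ case, but now feeding in the just-proved characterization of $A_{p,\lambda}$ (in terms of testing against indicator functions of subsets) rather than the defining $\widetilde{A}_{p,\lambda}$ condition. Since $d\mu(t)=t^p w(t)\,dt$ plays, for $A_{p,\lambda}$, the role that $w\,dx$ played for $\widetilde{A}_{p,\lambda}$, the arithmetic will be identical.

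First I would invoke the Characterization of $A_{p,\lambda}$ theorem proved immediately above: there exists $C>0$ (depending only on $[w]_{A_{p,\lambda}}$) such that for every interval $B\subset\R_+$ and every $f\in L^p(\R_+,\mu)$,
\[
\mu(B)\bigg(\frac{1}{\nu_\lambda(B)}\int_B |f|\,d\nu_\lambda\bigg)^{p}\leq C\int_B |f|^p\,d\mu.
\]
Applying this with the test function $f=\mathbbm{1}_{B\setminus S}$, the right hand side becomes $C\,\mu(B\setminus S)=C(\mu(B)-\mu(S))$, while the left hand side is
\[
\mu(B)\bigg(\frac{\nu_\lambda(B\setminus S)}{\nu_\lambda(B)}\bigg)^{p}=\mu(B)\bigg(1-\frac{\nu_\lambda(S)}{\nu_\lambda(B)}\bigg)^{p}.
\]

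Next I would use the hypothesis $\nu_\lambda(S)\leq \alpha\,\nu_\lambda(B)$, so that $1-\nu_\lambda(S)/\nu_\lambda(B)\geq 1-\alpha$. Combining the two estimates gives
\[
(1-\alpha)^{p}\mu(B)\leq C\bigl(\mu(B)-\mu(S)\bigr),
\]
which, after rearrangement, yields
\[
\mu(S)\leq \Big(1-\frac{(1-\alpha)^p}{C}\Big)\mu(B).
\]
Setting $\beta:=1-C^{-1}(1-\alpha)^p$, one has $\beta\in(0,1)$ since $\alpha\in(0,1)$ and $C\geq(1-\alpha)^p$ (because the constant $C$ is necessarily at least $1$ by testing with $S=B$), and the conclusion $\mu(S)\leq \beta\,\mu(B)$ follows.

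There is no real obstacle here: the only subtlety is ensuring $\beta<1$, which is automatic from $\alpha<1$ and the positivity of $C$, and ensuring $\beta\geq 0$, which one can arrange by taking $\max\{0,1-C^{-1}(1-\alpha)^p\}$ or simply by noting that the characterization constant satisfies $C\geq 1$ after testing on $f=\mathbbm{1}_B$. The argument is a direct transcription of the analogous $\widetilde{A}_{p,\lambda}$ lemma, with $w\,dx$ replaced by $d\mu$ throughout.
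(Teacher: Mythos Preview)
Your proposal is correct and is exactly the approach the paper intends: it explicitly says the proof ``follows from the first theorem in this section and the methods in the section 5,'' i.e., feed the characterization inequality $\mu(B)\bigl(\nu_\lambda(B)^{-1}\int_B|f|\,d\nu_\lambda\bigr)^p\le C\int_B|f|^p\,d\mu$ with $f=\mathbbm{1}_{B\setminus S}$ and rearrange, just as in the $\widetilde A_{p,\lambda}$ lemma with $d\mu$ replacing $w\,dx$.
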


 \medskip

\begin{thm}[Reverse Type Inequality for $A_{p,\lambda}$]~\\
Let $w\in A_{p,\lambda}$, $1<p<\infty.$ Then there is $\varepsilon>0$ such that for any interval $B\subset\mathbb{R}_+$,
$$\bigg(\frac{1}{\nu_\lambda(B)}\int_B (\mu\nu^{-1}_\lambda)^{1+\varepsilon}\,d\nu_\lambda\bigg )^{1/(1+\varepsilon)}\underset{p,w}{\lesssim}\frac{1}{\nu_\lambda(B)}\int_B \mu \nu^{-1}_\lambda d\nu_\lambda.$$
\end{thm}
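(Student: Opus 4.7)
The plan is to run the argument in close parallel with the reverse Hölder proof already established for $\widetilde{A}_{p,\lambda}$ in Section 6, with the roles of $w\nu_\lambda^{-1}$ and $w$ played now by $\mu\nu_\lambda^{-1}$ and $\mu$, respectively. The input that enables this substitution is precisely the preceding lemma: if $S\subset B$ and $\nu_\lambda(S)\leq \alpha\,\nu_\lambda(B)$ then $\mu(S)\leq \beta\,\mu(B)$ for some $\beta=\beta(\alpha,[w]_{A_{p,\lambda}})\in(0,1)$. This lemma is itself a direct consequence of the characterization just proved, applied to $f=\mathbbm{1}_{B\setminus S}$, and it is the one structural piece that was not already available for free.

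Fix an interval $B\subset\R_+$ and set
$$\alpha_0:=\frac{1}{\nu_\lambda(B)}\int_B \mu\nu_\lambda^{-1}\,d\nu_\lambda.$$
I would perform iterated Calder\'on--Zygmund decompositions of $\mu\nu_\lambda^{-1}\in L^1_{\rm loc}(d\nu_\lambda)$ with respect to the doubling measure $\nu_\lambda$ at heights $\alpha_k$ (chosen below), obtaining, at each stage $k$, a disjoint family of maximal stopping intervals $\{B_m^k\}_m$ and stopping set $\Omega_k:=\bigcup_m B_m^k$. By the decomposition these satisfy $\mu\nu_\lambda^{-1}\leq \alpha_k$ $\nu_\lambda$-a.e.\ off $\Omega_k$ and
$$\alpha_k<\frac{1}{\nu_\lambda(B_m^k)}\int_{B_m^k}\mu\nu_\lambda^{-1}\,d\nu_\lambda\leq 4^{\lambda+1}\alpha_k,$$
with doubling constant $4^{\lambda+1}$ for $\nu_\lambda$. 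Expressing $B_m^k\cap\Omega_{k+1}$ as a union of next-generation intervals, and using the upper bound above, I obtain $\nu_\lambda(B_m^k\cap\Omega_{k+1})\leq (4^{\lambda+1}\alpha_k/\alpha_{k+1})\,\nu_\lambda(B_m^k)$. Fixing $\alpha\in(0,1)$ and selecting $\alpha_{k+1}/\alpha_k=4^{\lambda+1}/\alpha$ yields $\nu_\lambda(\Omega_{k+1})\leq \alpha\,\nu_\lambda(\Omega_k)$ on each $B_m^k$, and then by the lemma $\mu(\Omega_{k+1}\cap B_m^k)\leq \beta\,\mu(B_m^k)$. Summing over $m$ and iterating gives the geometric decay
$$\mu(\Omega_k)\leq \beta^{k}\mu(\Omega_0).$$

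Decomposing now
$$\int_B(\mu\nu_\lambda^{-1})^{1+\varepsilon}\,d\nu_\lambda=\int_{B\setminus\Omega_0}(\mu\nu_\lambda^{-1})^{1+\varepsilon}\,d\nu_\lambda+\sum_{k=0}^{\infty}\int_{\Omega_k\setminus\Omega_{k+1}}(\mu\nu_\lambda^{-1})^{1+\varepsilon}\,d\nu_\lambda,$$
and using $\mu\nu_\lambda^{-1}\leq \alpha_{k+1}$ $\nu_\lambda$-a.e.\ on $\Omega_k\setminus\Omega_{k+1}$, each summand is bounded by $\alpha_{k+1}^{\varepsilon}\mu(\Omega_k)\leq \alpha_0^{\varepsilon}(4^{\lambda+1}/\alpha)^{(k+1)\varepsilon}\beta^{k}\mu(\Omega_0)$. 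Choosing $\varepsilon>0$ so small that $\beta(4^{\lambda+1}/\alpha)^{\varepsilon}<1$ makes the series converge, and dividing through by $\nu_\lambda(B)$ together with $\mu(\Omega_0)\leq \mu(B)$ and the identification of $\alpha_0$ reproduces the claimed reverse inequality.

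The step I expect to require the most care is not the summation itself (which is mechanical once the decay is in hand) but rather verifying that the transfer from $\nu_\lambda$-measure control to $\mu$-measure control via the preceding lemma gives a single constant $\beta$ independent of $k$ and of $B$; this is where the full strength of the $A_{p,\lambda}$ hypothesis enters, since the constant $\beta=1-C^{-1}(1-\alpha)^p$ one obtains from the lemma depends only on $[w]_{A_{p,\lambda}}$ and $\alpha$. Once this uniformity is confirmed, everything else is bookkeeping with the doubling constant $4^{\lambda+1}$ of $\nu_\lambda$.
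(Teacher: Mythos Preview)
Your proposal is correct and follows essentially the same route the paper indicates: the paper states that this theorem ``follows from the first theorem in this section and the methods in the section [6],'' and you have carried out precisely that adaptation, replacing $w$ by $\mu$ and $w\nu_\lambda^{-1}$ by $\mu\nu_\lambda^{-1}$ throughout the Calder\'on--Zygmund/geometric-decay argument. Your care in applying the $\nu_\lambda\to\mu$ transfer lemma cube-by-cube (on each $B_m^k$) before summing is exactly the point the paper's terse reference leaves implicit.
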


\medskip

\begin{cor}
Let $1<p<\infty$ and $w\in A_{p,\lambda}$, then there is $C>0$, $\delta>0$ such that for any interval $B$,
$$
A\subset B\implies \frac{\mu(A)}{\mu(B)}\leq C\bigg(\frac{\nu_\lambda(A)}{\nu_\lambda(B)}\bigg)^\delta.
$$
\end{cor}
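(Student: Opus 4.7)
The plan is to mirror, almost verbatim, the argument used a few pages earlier for the $\widetilde{A}_{p,\lambda}$ ratio testing corollary, with the measure $d\mu(t)=t^p w(t)\,dt$ playing the role that $w(t)\,dt$ played there. The only input we need is the immediately preceding reverse type inequality for $A_{p,\lambda}$, which supplies an $\varepsilon>0$ such that
\[
\bigg(\frac{1}{\nu_\lambda(B)}\int_B (\mu\nu^{-1}_\lambda)^{1+\varepsilon}\,d\nu_\lambda\bigg)^{1/(1+\varepsilon)}\lesssim\frac{1}{\nu_\lambda(B)}\int_B \mu\,\nu^{-1}_\lambda\,d\nu_\lambda = \frac{\mu(B)}{\nu_\lambda(B)}
\]
for every interval $B\subset\mathbb R_+$.

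Given any interval $B$ and any measurable $A\subset B$, I would first rewrite $\mu(A)$ as an integral against $d\nu_\lambda$, namely $\mu(A)=\int_A \mu\nu_\lambda^{-1}\,d\nu_\lambda$. Then I would apply H\"older's inequality on $A$ with conjugate exponents $1+\varepsilon$ and $(1+\varepsilon)/\varepsilon$, followed by the trivial extension of the integral from $A$ to $B$:
\[
\mu(A)\leq \nu_\lambda(A)^{\varepsilon/(1+\varepsilon)}\bigg(\int_A (\mu\nu_\lambda^{-1})^{1+\varepsilon}d\nu_\lambda\bigg)^{1/(1+\varepsilon)}\leq \nu_\lambda(A)^{\varepsilon/(1+\varepsilon)}\,\nu_\lambda(B)^{1/(1+\varepsilon)}\bigg(\frac{1}{\nu_\lambda(B)}\int_B (\mu\nu_\lambda^{-1})^{1+\varepsilon}d\nu_\lambda\bigg)^{1/(1+\varepsilon)}.
\]

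The last factor is controlled by $C\mu(B)/\nu_\lambda(B)$ via the reverse type inequality, which after cancellation yields
\[
\mu(A)\leq C\,\mu(B)\left(\frac{\nu_\lambda(A)}{\nu_\lambda(B)}\right)^{\varepsilon/(1+\varepsilon)}.
\]
Setting $\delta:=\varepsilon/(1+\varepsilon)\in(0,1)$ gives the desired bound. I do not expect any genuine obstacle at this step, since the reverse type inequality is the whole substance of the estimate; the corollary is just the standard extraction of a H\"older-continuous ratio bound from a reverse H\"older estimate. The only minor point of care is keeping track of the measure $d\nu_\lambda$ rather than Lebesgue measure throughout, but this plays no conceptual role.
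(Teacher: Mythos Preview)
Your proposal is correct and is exactly the approach the paper intends: the paper does not spell out this corollary's proof but explicitly states it ``follows from the first theorem in this section and the methods in the section~5,'' where the identical H\"older-plus-reverse-H\"older argument (with $\delta=\varepsilon/(1+\varepsilon)$) is carried out for the $\widetilde{A}_{p,\lambda}$ Ratio Testing corollary. You have simply reproduced that argument with $\mu$ in place of $w$, which is precisely what the paper means.
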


\medskip

The proof of the above properties follows from the first theorem in this section and the methods in the section $5$. However, these proofs seem to be not efficient enough to give us an explicit relation between $\varepsilon$ and $[w]_{A_{p,\lambda}}$. In order to do so, we need more subtle proofs which we now build up.

\medskip

\begin{thm}[Reverse Type Constant for $A_{p,\lambda}$ ]~\\
Let $w\in A_{p,\lambda}$, then there is a constant $c>0$ such that for any interval $B\subset\R_+$,
$$\left(\frac{1}{\nu_\lambda(B)}\int_B (\mu\nu^{-1}_\lambda)^{1+\varepsilon}\,d\nu_\lambda\right )^{1/(1+\varepsilon)}\leq\frac{2}{\nu_\lambda(B)}\int_B \mu \nu^{-1}_\lambda d\nu_\lambda,$$
where $$
\varepsilon:=1+\frac{1}{c\cdot [w]_{A_{p,\lambda}}}. 
$$
\end{thm}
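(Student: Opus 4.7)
The plan is to prove a sharp, quantitative reverse Hölder inequality by adapting the Hytönen–Pérez-type argument to the doubling measure $\nu_\lambda$ and the companion measure $d\mu(t)=t^p w(t)\,dt$. The qualitative existence of some $\varepsilon>0$ is already provided by the previous theorem in this section, so the sole content here is to track dependence on $[w]_{A_{p,\lambda}}$ through every estimate. I would fix an interval $B\subset\R_+$, denote $h:=\mu\nu_\lambda^{-1}$, and normalize $\frac{1}{\nu_\lambda(B)}\int_B h\,d\nu_\lambda=1$. The goal then becomes
$$\frac{1}{\nu_\lambda(B)}\int_B h^{1+\varepsilon}\,d\nu_\lambda \le 2^{1+\varepsilon},$$
which by layer-cake reduces to showing that the superlevel sets of $h$ (relative to $\nu_\lambda$) decay geometrically at a rate controlled by $[w]_{A_{p,\lambda}}$.

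First I would run a Calderón–Zygmund stopping-time decomposition of $h$ on $B$ with respect to $\nu_\lambda$ (available via the doubling Calderón–Zygmund decomposition already established in the paper) at a sequence of heights $\alpha_k=2^k$. This produces a disjoint family of dyadic subintervals $\{B^k_j\}_j$ of $B$ at each generation, with $\nu_\lambda$-average of $h$ on each $B^k_j$ comparable to $2^k$, and with the nested sets $\Omega_k:=\bigcup_j B^k_j$ satisfying $\Omega_{k+1}\subset \Omega_k$. Using the characterization of $A_{p,\lambda}$ (the first theorem of this section) applied to $f=\chi_{\Omega_{k+1}\cap B^k_j}$, combined with the observation that on $B^k_j$ the measure $h\,d\nu_\lambda=d\mu$ controls $\nu_\lambda$, I get the quantitative ratio bound
$$\nu_\lambda(\Omega_{k+1}\cap B^k_j) \le \theta([w]_{A_{p,\lambda}})\,\nu_\lambda(B^k_j),$$
with the crucial feature that $\theta$ can be taken of the form $1-c'/[w]_{A_{p,\lambda}}$ by inserting $\chi_{B^k_j\setminus\Omega_{k+1}}$ into the $A_{p,\lambda}$ testing inequality and rearranging (this is the quantitative analogue of the earlier $\nu_\lambda(S)\le\alpha\nu_\lambda(B)\Rightarrow \mu(S)\le\beta\mu(B)$ lemma, with the explicit $\alpha$–$\beta$ conversion kept intact).

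Iterating the bound gives $\nu_\lambda(\Omega_k)\le\theta^k\,\nu_\lambda(B)$, and hence by the stopping property $\mu(\Omega_k)\lesssim 2^k\,\nu_\lambda(\Omega_k)\lesssim 2^k\theta^k\nu_\lambda(B)$. I can then estimate
$$\int_B h^{1+\varepsilon}\,d\nu_\lambda = \int_{B\setminus\Omega_0}h^{1+\varepsilon}\,d\nu_\lambda+\sum_{k\ge 0}\int_{\Omega_k\setminus\Omega_{k+1}}h^{1+\varepsilon}\,d\nu_\lambda \le \nu_\lambda(B)+\sum_{k\ge 0}2^{(k+1)\varepsilon}\mu(\Omega_k),$$
and the right-hand geometric series converges with comparable geometric ratio $2^{\varepsilon}\theta$. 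Requiring this ratio to be bounded by, say, $1/2$ forces
$$\varepsilon \lesssim -\log_2\theta \;\sim\; \frac{1}{[w]_{A_{p,\lambda}}},$$
which yields the claimed form of $\varepsilon$ and the constant $2$ on the right-hand side upon choosing $c$ correctly.

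The main obstacle is the bookkeeping in the second step: obtaining the decay factor $\theta=1-c'/[w]_{A_{p,\lambda}}$ rather than a merely abstract $\theta\in(0,1)$ as in the earlier qualitative version. This requires applying the $A_{p,\lambda}$ testing inequality with the \emph{sharp} constant $[w]_{A_{p,\lambda}}$ (not the loose $C$ from Section 5), expanding $(1-\alpha)^p$ and extracting an $O(1/[w]_{A_{p,\lambda}})$ loss, then propagating this through the geometric iteration without picking up any further $[w]$-dependence. Once this linear-in-$1/[w]$ decay is secured, the rest of the argument is a quantitative repetition of the proof of the reverse Hölder inequality in Section 6 with the roles of $w\nu_\lambda^{-1}$ replaced by $h=\mu\nu_\lambda^{-1}$.
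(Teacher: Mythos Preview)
Your iteration scheme has the measures reversed at the key step, and as written the geometric series diverges. The testing inequality $\mu(Q)\bigl(\nu_\lambda(S)/\nu_\lambda(Q)\bigr)^p\le [w]_{A_{p,\lambda}}\,\mu(S)$ lets you pass from a $\nu_\lambda$-ratio to a $\mu$-ratio, not the other way around; inserting $\chi_{B^k_j\setminus\Omega_{k+1}}$ gives $(1-\alpha)^p\le[w]_{A_{p,\lambda}}(1-\beta)$ with $\alpha=\nu_\lambda(\Omega_{k+1}\cap B^k_j)/\nu_\lambda(B^k_j)$ and $\beta=\mu(\Omega_{k+1}\cap B^k_j)/\mu(B^k_j)$, which bounds $\beta$, not $\alpha$. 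So your claimed $\nu_\lambda$-decay $\theta=1-c'/[w]_{A_{p,\lambda}}$ is not justified. Moreover, even granting it, after you insert $\mu(\Omega_k)\lesssim 2^k\theta^k\nu_\lambda(B)$ into $\sum_k 2^{(k+1)\varepsilon}\mu(\Omega_k)$ the ratio is $2^{1+\varepsilon}\theta$, not $2^\varepsilon\theta$; for large $[w]_{A_{p,\lambda}}$ this is close to $2$ and the series blows up. The fix is to iterate in $\mu$: take the height ratio $M=2\cdot 4^{\lambda+1}$ so that pure stopping gives $\alpha\le 1/2$ with no $[w]$-dependence, and \emph{then} apply the sharp testing inequality to obtain $\mu(\Omega_{k+1})\le\bigl(1-2^{-p}[w]_{A_{p,\lambda}}^{-1}\bigr)\mu(\Omega_k)$. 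The series $\sum_k M^{(k+1)\varepsilon}\mu(\Omega_k)$ then has ratio $M^\varepsilon\bigl(1-2^{-p}[w]_{A_{p,\lambda}}^{-1}\bigr)$, which is $\le 1/2$ once $\varepsilon\le c_{p,\lambda}/[w]_{A_{p,\lambda}}$.

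For comparison, the paper does not iterate in generations at all. It runs a single bootstrap: it writes $\int_B h^{1+\varepsilon}\,d\nu_\lambda$ as a layer-cake integral in $t$, uses the testing inequality once on each Calder\'on--Zygmund cube $B_k$ at level $t$ to show that $\{h>t/(2[w]_{A_{p,\lambda}})\}$ occupies a fixed $\nu_\lambda$-fraction of $B_k$, and thereby bounds $\mu(\{h>t\})\le C\,t\,\nu_\lambda\bigl(\{h>t/(2[w]_{A_{p,\lambda}})\}\bigr)$. After a change of variable this produces an inequality of the form $\int_B h^{1+\varepsilon}\,d\nu_\lambda\le h_{B,\lambda}^{1+\varepsilon}\nu_\lambda(B)+C\varepsilon\,[w]_{A_{p,\lambda}}^{1+\varepsilon}\int_B h^{1+\varepsilon}\,d\nu_\lambda$, and choosing $\varepsilon=1/(c[w]_{A_{p,\lambda}})$ makes the coefficient $\le 1/2$ so the last term can be absorbed. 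Both routes give the same dependence $\varepsilon\sim 1/[w]_{A_{p,\lambda}}$ once yours is repaired; the paper's absorption argument is shorter, while the corrected iteration has the advantage of reusing verbatim the qualitative machinery already built in the section.
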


\begin{proof}
 For any $\varepsilon>0$, let $w_1:=\mu\cdot\nu_\lambda$ and consider that
 \begin{align*}
 \frac{1}{\nu_\lambda(B)}\int_B (\mu\nu^{-1}_\lambda)^{1+\varepsilon}\,d\nu_\lambda
 &=\frac{\varepsilon}{\nu_\lambda(B)}\int_B \left(\int^{\mu\cdot\nu^{-1}_\lambda}_0 t^{\varepsilon-1}dt\right)\cdot \mu\nu^{-1}_\lambda d\nu_\lambda \\
 &=\frac{\varepsilon}{\nu_\lambda(B)}\int_{\R_+}t^\varepsilon \mu\left(\{x\in B: (\mu\cdot\nu^{-1}_\lambda)(x)>t\}\right)\frac{dt}{t}\\
 &= Term_1 +Term_2,
\end{align*}
in which 
$$
Term_1:=\frac{\varepsilon}{\nu_\lambda(B)}\int^{(w_1)_{B,\lambda}}_0 t^\varepsilon \mu\left(\{x\in B: (\mu\cdot\nu^{-1}_\lambda)(x)>t\}\right)\frac{dt}{t}.
$$
Note that
\begin{align*}
Term_1\leq\frac{\varepsilon}{\nu_\lambda(B)}\int^{(w_1)_{B,\lambda}}_0 t^\varepsilon \mu(B)\frac{dt}{t}=(w_1)^{1+\varepsilon}_{B,\lambda}.
\end{align*}

To estimate the second term,  
\begin{align*}
\left(\frac{1}{\nu_\lambda (B)}\int_B f(t)t^{1+2\lambda}dt\right)^p
&=\left(\frac{1}{\nu_\lambda (B)}\int_B f(t)w^{\frac{1}{p}}(t)\cdot t \cdot w^{\frac{-1}{p}}(t)\cdot t^{2\lambda}dt\right)^p\\ 
&\leq \nu^{-p}_\lambda (B)\cdot\int_B f^p(t) d\mu(t)\cdot\left(\int_B t^{2\lambda p^{'}} w^{\frac{-1}{p-1}} (t)dt\right)^{p-1},   
\end{align*}
which implies that
\begin{align*}
\left(\frac{1}{\nu_\lambda (B)}\int_B f(t)t^{1+2\lambda}dt\right)^p \cdot\mu(B)
&\leq \int_B f^p(t) d\mu(t)\cdot\frac{\mu(B)}{\nu_\lambda (B)}\cdot\left(\frac{1}{\nu_\lambda (B)}\int_B t^{2\lambda p^{'}} w^{\frac{-1}{p-1}} (t)dt\right)^{p-1}\\
&\leq\int_B f^p(t)d\mu(t)\cdot [w]_{A_{p,\lambda}}.   
\end{align*}
Then for any measurable subset $S\subset B$, one has
$$
\left(\frac{\nu_\lambda (S)}{\nu_\lambda (B)}\right)^p\leq \frac{\mu(S)}{\mu(B)}\cdot [w]_{A_{p,\lambda}}.
$$
Define the set 
\begin{align*}
S_B:=\left\{x\in B:w_1(x)\leq \frac{(w_1)_{B,\lambda}}{2[w]_{p,\lambda}}\right\}
\implies \left(\frac{\nu_\lambda (S_B)}{\nu_\lambda (B)}\right)^p &\leq \frac{[w]_{A_{p,\lambda}}}{\mu(B)}\cdot\int_{S_B}\frac{(w_1)_{B,\lambda}}{2[w]_{A_{p,\lambda}}}\nu_\lambda(t)dt \\
&=\frac{\nu_\lambda (S_B)}{2\nu_\lambda (B)},
\end{align*}
and hence
$$
2^{p-1}\cdot\left(\frac{\nu_\lambda (B-S_B)}{\nu_\lambda (B)}\right)^{p-1}\geq 
\left(\frac{\nu_\lambda (B)}{\nu_\lambda (B)}\right)^{p-1}-\left(\frac{\nu_\lambda (S_B)}{\nu_\lambda (B)}\right)^{p-1}\geq 1-\frac{1}{2}\geq \frac{1}{2},
$$
that is
$$
2^{\frac{p}{p-1}}\cdot\nu_\lambda\left(\left\{x\in B: w_1 (x)> \frac{(w_1)_{B,\lambda}}{2[w]_{A_{p,\lambda}}}\right\}\right)\geq \nu_\lambda (B).
$$
Besides, by the dyadic maximal function argument, there are disjoint dyadic cubes $B_k\subset B$ such that
$$t<(w_1)_{B_k,\lambda}=\frac{\mu(B_k)}{\nu_\lambda  (B_k)}\leq 4^{1+\lambda}t,$$
which implies that for all $t\geq (w_1)_{B,\lambda},$
\begin{align*}
 \mu\left(\{x\in B:w_1(x)>t\}\right)&\leq   \mu\left(\{x\in B:\mathcal{M}^d_\lambda w_1(x)>t\}\right)\\
 &\leq\sum_k \mu(B_k)\\
 &\leq4^{1+\lambda}\cdot t\sum_k \nu_\lambda(B_k)\\
 &\leq 2^{\frac{p}{p-1}}\cdot4^{1+\lambda}\cdot t\sum_k \nu_\lambda\left(\{x\in B_k:w_1(x)>\frac{(w_1)_{B,\lambda}}{2[w]_{A_{p,\lambda}}}\}\right)\\
 &\leq 2^{\frac{p}{p-1}}\cdot4^{1+\lambda}\cdot t \sum_k \nu_\lambda\left(\{x\in B_k:w_1(x)>\frac{t}{2[w]_{A_{p,\lambda}}}\}\right)\\
 &\leq 2^{\frac{p}{p-1}}\cdot4^{1+\lambda}\cdot t \cdot \nu_\lambda\left(\{x\in B:w_1(x)>\frac{t}{2[w]_{A_{p,\lambda}}}\}\right).\\
\end{align*}
Therefore,
\begin{align*}
 Term_2:&=\frac{\varepsilon}{\nu_\lambda(B)}\int^\infty_{(w_1)_{B,\lambda}} t^\varepsilon \mu\left(\{x\in B: (\mu\cdot\nu^{-1}_\lambda)(x)>t\}\right)\frac{dt}{t}\\
 &\leq \frac{ 2^{\frac{p}{p-1}}\cdot4^{1+\lambda}\cdot\varepsilon}{\nu_\lambda (B)}\int^\infty_{(w_1)_{B,\lambda}} t^{1+\varepsilon}\cdot\nu_\lambda\left(\{x\in B : w_1 (x)>\frac{t}{2[w]_{A_{p,\lambda}}}\}\right)\frac{dt}{t}\\
 &=\frac{2^{\frac{p}{p-1}}\cdot4^{1+\lambda}\cdot\varepsilon}{\nu_\lambda (B)}\cdot (2\cdot [w]_{A_{p,\lambda}})^{1+\varepsilon}\int^{\infty}_{\frac{(w_1)_{B,\lambda}}{2\cdot [w]_{A_{p,\lambda}}}}\alpha^{1+\varepsilon}\cdot\nu_\lambda\left(\{x\in B : w_1 (x)>\alpha\}\right)\frac{d\alpha}{\alpha}\\
 &\leq \frac{ 2^{\frac{p}{p-1}}\cdot4^{1+\lambda}\cdot\varepsilon}{\nu_\lambda (B)}\cdot (2\cdot [w]_{A_{p,\lambda}})^{1+\varepsilon} \int_B \nu_\lambda (x) \int^{w_1 (x)}_0 \alpha^\varepsilon d\alpha dx\\
 &= 2^{\frac{p}{p-1}}\cdot4^{1+\lambda}(2\cdot[w]_{A_{p,\lambda}})^{1+\varepsilon}\cdot \frac{\varepsilon}{1+\varepsilon}\cdot\frac{1}{\nu_\lambda (B)}\int_B w^{1+\varepsilon}_1(x)d\nu_\lambda (x).
\end{align*}
Choose $\varepsilon:=\frac{1}{c\cdot [w]_{A_p,\lambda}}$ and using the fact that $\forall t\geq 1\implies t^{\frac{1}{t}}\leq 2$, we have
\begin{align*}
 2^{\frac{p}{p-1}}\cdot4^{1+\lambda}(2\cdot[w]_{A_{p,\lambda}})^{1+\varepsilon}\cdot \frac{\varepsilon}{1+\varepsilon}&=\left(2\cdot[w]_{A_{p,\lambda}}\right)^{\frac{1}{2[w]_{A_{p,\lambda}}}\cdot\frac{2+2c [w]_{A_{p,\lambda}}}{c}} 2^{\frac{p}{p-1}}\cdot4^{1+\lambda}\cdot \frac{1}{1+c\cdot[w]_{A_{p,\lambda}}}\\
 &\leq 2^{\frac{p}{p-1}}\cdot\frac{4^{1+\lambda+\frac{1}{c}+[w]_{A_{p,\lambda}}}}{c} \\&< \frac{1}{2},
\end{align*}
provided that $c$ is large enough. As a result,
\begin{align*}
\frac{1}{\nu_\lambda(B)}\int_B (\mu\nu^{-1}_\lambda)^{1+\varepsilon}\,d\nu_\lambda&\leq (w_1)^{1+\varepsilon}_{B,\lambda}+Term_2\\
&\leq (w_1)^{1+\varepsilon}_{B,\lambda}+\frac{1}{2}\cdot \frac{1}{\nu_\lambda (B)}\int_B w^{1+\varepsilon}_1(x)d\nu_\lambda (x)
\end{align*}
$$
\implies \left(\frac{1}{\nu_\lambda(B)}\int_B (\mu\nu^{-1}_\lambda)^{1+\varepsilon}\,d\nu_\lambda\right)^{\frac{1}{1+\varepsilon}}\leq 2^{\frac{1}{1+\varepsilon}}\cdot\frac{1}{\nu_\lambda(B)}\int_B \mu \nu^{-1}_\lambda d\nu_\lambda\leq\frac{2}{\nu_\lambda(B)}\int_B \mu \nu^{-1}_\lambda d\nu_\lambda.
$$
The proof is complete.
\end{proof}

\section{Proof of Theorem \ref{thmcom} and Proposition \ref{propbmo}}
We first establish the following argument, where a prototype in $\R^n$ for the classical $A_p$ weight was studied in \cite{ho2011characterizations}.
\begin{prop}[$\rm BMO$ Norm-Equivalent]~\\
$(1)$ For all $b\in {\rm BMO}_\lambda (\R_+),$
$$
\frac{1}{2}\|b\|_{{\rm BMO}_\lambda (\R_+)}\leq \sup_{B\subset\R_+} \inf_{a\in \mathbb{C}}\frac{1}{\nu_\lambda(B)}\int_B |b(x)-a|d\nu_\lambda (x)\leq \|b\|_{{\rm BMO}_\lambda(\R_+)}.
$$
$(2)$ Let $\lambda>\frac{-1}{2}$ and $\lambda\neq 0$,
$    {\rm BMO}_{\Delta_\lambda} (\mathbb{R_+})$ coincides with ${\rm BMO}_\lambda(\R_+)$
and they have equivalent norms. 
\end{prop}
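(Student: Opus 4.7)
The plan is to prove (1) first, then use it together with a direct comparison of the averaging measures $dm_\lambda$ and $d\nu_\lambda$ to establish the two inclusions in (2).

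For (1), the upper bound is immediate by taking $a = f_{B,\lambda}$ inside the infimum. For the lower bound, for any constant $a \in \mathbb{C}$,
\[
|f_{B,\lambda} - a| = \Bigl|\tfrac{1}{\nu_\lambda(B)}\int_B (f(t)-a)\,d\nu_\lambda(t)\Bigr| \leq \tfrac{1}{\nu_\lambda(B)}\int_B |f(t)-a|\,d\nu_\lambda(t),
\]
so by the triangle inequality
\[
\tfrac{1}{\nu_\lambda(B)}\int_B |f - f_{B,\lambda}|\,d\nu_\lambda \leq 2 \cdot \tfrac{1}{\nu_\lambda(B)}\int_B |f - a|\,d\nu_\lambda.
\]
Taking $\inf$ over $a$ and $\sup$ over $B$ yields the claim. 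The identical argument with $dm_\lambda$ in place of $d\nu_\lambda$ produces a completely analogous $\inf$-characterization of $\|\cdot\|_{{\rm BMO}_{\Delta_\lambda}}$, which I use freely below.

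For the easier direction of (2), $\|f\|_{{\rm BMO}_\lambda} \lesssim \|f\|_{{\rm BMO}_{\Delta_\lambda}}$, on $B = (a,b)$ test the $\inf$-functional with $c := f_B$ (the $dm_\lambda$-average). Since $d\nu_\lambda(t) = t\,dm_\lambda(t)$ and $t \leq b$ on $B$,
\[
\tfrac{1}{\nu_\lambda(B)}\int_B |f - f_B|\,d\nu_\lambda \leq \tfrac{b\,m_\lambda(B)}{\nu_\lambda(B)}\,\|f\|_{{\rm BMO}_{\Delta_\lambda}}.
\]
The closed-form formulas $m_\lambda(B) = \tfrac{b^{2\lambda+1}-a^{2\lambda+1}}{2\lambda+1}$ and $\nu_\lambda(B) = \tfrac{b^{2\lambda+2}-a^{2\lambda+2}}{2\lambda+2}$, together with the trivial inequality $b(b^{2\lambda+1}-a^{2\lambda+1}) \leq b^{2\lambda+2}-a^{2\lambda+2}$, give $b\,m_\lambda(B)/\nu_\lambda(B) \leq \tfrac{2\lambda+2}{2\lambda+1}$ uniformly in $B$. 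Part (1) then closes this direction.

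The reverse direction $\|f\|_{{\rm BMO}_{\Delta_\lambda}} \lesssim \|f\|_{{\rm BMO}_\lambda}$ is the main obstacle, since one cannot uniformly invert the factor $t$ in $d\nu_\lambda = t\,dm_\lambda$ when $B$ reaches near $0$. I would split into two cases. \emph{Case A} ($b \leq 2a$): on $B$ we have $t \sim a \sim b$, so both $dm_\lambda/m_\lambda(B)$ and $d\nu_\lambda/\nu_\lambda(B)$ are uniformly comparable to the normalized Lebesgue measure on $B$; hence the $dm_\lambda$ and $d\nu_\lambda$ averages of $|f - f_{B,\lambda}|$ are equivalent, and both are $\leq C\|f\|_{{\rm BMO}_\lambda}$. \emph{Case B} ($b > 2a$) is delicate: decompose $B$ into dyadic annuli $B_k := (2^{-k-1}b, 2^{-k}b) \cap B$, on each of which $t \sim 2^{-k}b$, so $\int_{B_k}|f - f_{B,\lambda}|\,dm_\lambda \sim (2^{-k}b)^{-1}\int_{B_k}|f - f_{B,\lambda}|\,d\nu_\lambda$. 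A standard telescoping argument over the doubling measure $d\nu_\lambda$ (doubling precisely because $\lambda > -1/2$) gives $|f_{B_k,\lambda} - f_{B,\lambda}| \lesssim (k+1)\|f\|_{{\rm BMO}_\lambda}$, and thus $\int_{B_k}|f - f_{B,\lambda}|\,d\nu_\lambda \lesssim (k+1)(2^{-k}b)^{2\lambda+2}\|f\|_{{\rm BMO}_\lambda}$. Summing in $k$ yields the series $\sum_k(k+1)2^{-k(2\lambda+1)}$, which converges precisely because $2\lambda+1 > 0$. Since $m_\lambda(B) \gtrsim b^{2\lambda+1}$ in Case B (as $a < b/2$), dividing gives $\tfrac{1}{m_\lambda(B)}\int_B |f - f_{B,\lambda}|\,dm_\lambda \lesssim \|f\|_{{\rm BMO}_\lambda}$, and Part (1) concludes. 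The main difficulty is exactly this series convergence in Case B, which is where the hypothesis $\lambda > -1/2$ plays its crucial role.
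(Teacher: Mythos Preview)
Your argument for Part~(1) is exactly the paper's. For Part~(2), your proof is correct but follows a genuinely different route. The paper's strategy is to recognize that $d\nu_\lambda = W\,dm_\lambda$ with $W(t)=t$, assert that $W$ is a Muckenhoupt $A_2$ weight on $(\mathbb R_+,|\cdot|,m_\lambda)$, and then invoke general weighted-BMO machinery: for ${\rm BMO}_{\Delta_\lambda}\subset{\rm BMO}_\lambda$ one combines John--Nirenberg for $dm_\lambda$ with the $A_\infty$ ratio bound $W(E)/W(B)\lesssim (m_\lambda(E)/m_\lambda(B))^\varepsilon$, and for the reverse inclusion one uses H\"older together with the $A_2$ bound and John--Nirenberg for $d\nu_\lambda$. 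Your approach, by contrast, is purely elementary: the easy direction follows from the one-line pointwise bound $t\le b$ and the closed-form identity $b\,m_\lambda(B)\le \tfrac{2\lambda+2}{2\lambda+1}\nu_\lambda(B)$, while the hard direction is a hands-on dyadic-annuli decomposition with a telescoping estimate and the convergent series $\sum_k (k+1)2^{-k(2\lambda+1)}$. What the paper's approach buys is conceptual economy and immediate placement in the general framework of weighted BMO equivalences. What your approach buys is that it avoids John--Nirenberg entirely and makes the role of the hypothesis $\lambda>-\tfrac12$ completely explicit (it is exactly the convergence of your series). It is also worth noting that the paper's $A_2$ claim for $W(t)=t$ is actually problematic when $-\tfrac12<\lambda<0$, since $\int_a^b t^{2\lambda-1}\,dt$ blows up as $a\to0^+$; your argument sidesteps this issue cleanly.
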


\begin{proof}
To prove $(1)$, let $a\in\mathbb{C}$ be given,
\begin{align*}
 \int_B |b(x)-b_{B,\lambda}|d\nu_\lambda\leq \int_B|b(x)-a|d\nu_\lambda+\int_B |a-b_{B,\lambda}|d\nu_\lambda  \leq2\cdot\int_B |b(x)-a|d\nu_\lambda,
\end{align*}
and the other inequality is trivial.

We now prove (2). Treat $(\R_+,|\cdot|,m_\lambda)$ as a space of homogeneous type in the sense of Coifman and Weiss \cite{coifman1977extensions}. Then by noting that
$$\nu_\lambda(t) = t^{2\lambda+1}dt =t\cdot t^{2\lambda}dt =W(t) m_\lambda(t),$$
the triple $(\R_+,|\cdot|,\nu_\lambda)$ is the weighted space of $(\R_+,|\cdot|,m_\lambda)$ under the standard weight $W(t)=t$.
One can verity that $W(t)$ is a Muckenhoupt $A_2$ weight in the setting 
$(\R_+,|\cdot|,m_\lambda)$. That is,
\begin{align*}
    \bigg(\frac{1}{m_\lambda(B)}\int_B W(t)dm_\lambda(t)\bigg)\bigg(\frac{1}{m_\lambda(B)}\int_B W(t)^{-1}dm_\lambda(t)\bigg)<C.
\end{align*}

We now first show that $\rm{ BMO}_{\Delta_\lambda}(\mathbb{R_+})\subset {\rm BMO}_\lambda(\R_+)$.  To see this, we first note that 
for ${\rm BMO}_{\Delta_\lambda} (\mathbb{R_+})$, we have the standard John--Nirenberg inequality as we treat $(\R_+,|\cdot|,m_\lambda)$ as a space of homogeneous type, that is,
$$ m_\lambda(\{ x\in B\subset \R_+: \left|b(x)-b_B\right|>\gamma\})
\leq C_1\exp\left(-{ C_2\gamma\over \|b\|_{{\rm BMO}_{\Delta_\lambda} (\mathbb{R_+})} }\right)\cdot m_\lambda(B).$$
Then we have
$$ W(\{ x\in B\subset \R_+: \left|b(x)-b_{B}\right|>\gamma\})
\leq C_0C_1^\varepsilon\exp\left({-{ C_2\cdot\varepsilon\gamma\over \|b\|_{{\rm BMO}_{\Delta_\lambda} (\mathbb{R_+})} }}\right)\cdot W(B).$$
Here, we recall again $W(B)$ is understood as the weight on $(\R_+,|\cdot|,m_\lambda)$, that is, 
$$W(B) = \int_B W(t) dm_\lambda(t) = \int_B t\cdot t^{2\lambda}dt
= \nu_\lambda(B).$$

Hence, for any $B\subset\R_+$,
\begin{align*}
\frac{1}{\nu_\lambda(B)}\int_B |b(t)-b_{B,\lambda}|d\nu_\lambda (t) &\leq \frac{2}{\nu_\lambda(B)}\int_B |b(t)-b_{B}|d\nu_\lambda (t)\\
&=
    {2\over W(B)}\int_B |b(t)-b_{B}|W(t)dm_\lambda(t)\\
    &={2\over W(B)} \int_0^\infty W(\{ x\in B: |b(x)-b_{B}|>\gamma\})d\gamma\\
    &\leq  2C_0C_1^\varepsilon \int_0^\infty \exp\left({-{ C_2\cdot\varepsilon\gamma\over \|b\|_{{\rm BMO}_{\Delta_\lambda} (\mathbb{R_+})} }}\right) d\gamma\\
    &\lesssim  \|b\|_{{\rm BMO}_{\Delta_\lambda} (\mathbb{R_+})}.
\end{align*}
Thus, we have
$$\|b\|_{{\rm BMO}_{\Delta_\lambda} (\mathbb{R_+})}\subset {\rm BMO}_\lambda(\R_+).$$

Next, we prove ${\rm BMO}_\lambda(\R_+)\subset {\rm BMO}_{\Delta_\lambda} (\mathbb{R_+})$.

 By H\"older's inequality, we have
 \begin{align*}
     &{1\over m_\lambda(B)}\int_B |b(t)-b_B|dm_\lambda(t)\\
     &\leq{2\over m_\lambda(B)}\int_B |b(t)-b_{B,\lambda}|dm_\lambda(t)\\
     &\leq{1\over m_\lambda(B)}\left(\int_B |b(t)-b_{B,\lambda}|^2W(t)dm_\lambda(t)\right)^{1\over2}\left(\int_B W(t)^{-1}dm_\lambda(t)\right)^{1\over2}\\
     &\lesssim \left({1\over W(B)}\int_B |b(t)-b_{B,\lambda}|^2W(t)dm_\lambda(t)\right)^{1\over2},
 \end{align*}
 where the last inequality  follows from the $A_2$ condition. Finally using John--Nirenberg inequality, we have $$\left({1\over W(B)}\int_B |b(t)-b_{B,\lambda}|^2W(t)dm_\lambda(t)\right)^{1\over2} \lesssim \|b\|_{{\rm BMO}_{\lambda} (\mathbb{R_+})} .$$

 The proof is complete.
\end{proof}

\medskip

With the development of the new average and the new weighted class, $\widetilde{A}_{p,\lambda}$ and $A_{p,\lambda}$, we can characterize the $L^p(\R_+,w)$-boundedness of the commutator $[b, R_\lambda]$. Our method in showing the upper bound is ignited by the powerful Cauchy Integral trick and the inner structure of $A_{p,\lambda}$. On the other hand, we apply the method in \cite{duong2021two} to prove the lower bound of $[b, R_\lambda].$ First off, we show that

$$
w\in A_{p,\lambda} \implies
\|[b,R_\lambda]\|_{L^p(\mathbb{R_+},w)\to L^p(\mathbb{R_+},w)} \lesssim \|b\|_{{\rm BMO}_\lambda} <\infty.
$$

\bigskip

\begin{proof}
Pick $z\in\mathbb{C}$ and define the generating operator by the following
$$
T_z (f):=e^{zb}\cdot R_\lambda (e^{-zb}f).
$$
Note that for appropriate function $f$, we have
\begin{align*}
T_z (f)&=\left(1+zb+\cdots\right)\cdot R_\lambda \left((1-zb+\cdots)f\right)\\
&= R_\lambda (f)+z\cdot \left(b R_\lambda (f)-R_\lambda(bf)\right)+z^2\cdot G\\
&= R_\lambda (f)+z\cdot [b,R_\lambda]f+z^2\cdot G,
\end{align*}
where $G$ is an operator, and hence
$$
[b,R_\lambda]f=\frac{d}{dz}\mid_{z=0} T_z (f).
$$
By Cauchy Integral Formula, 
$$
[b,R_\lambda]f=\frac{1}{2\pi i}\int_{D_\varepsilon}\frac{T_z (f)}{z^2}dz,
$$
in which $D_\varepsilon$ is the boundary of a disk with radius $ \varepsilon>0.$
Then by Minkowski's integral inequality,
\begin{align}\label{commutator upper bound}
\|[b,R_\lambda]f\|_{L^p(R_+,w)}\leq \frac{1}{2\pi \varepsilon^2}\int_{D_\varepsilon}\|T_z(f)\|_{L^p(R_+,w)}dz.
\end{align}
Now, we calculate the term inside in integral and get
$$
\|T_z(f)\|_{L^p(R_+,w)}=\|R_\lambda (e^{-zb}f)\|_{L^p(R_+,e^{pRe(z)b}w)}.
$$
Since $\|R_\lambda (f)\|_{L^p(R_+,w)}\underset{A_{p,\lambda}}{\lesssim}\|f\|_{L^p(R_+,w)}$, we need to check that whether
\begin{align}\label{exp zb weight}
[e^{pRe(z)b}w]_{A_{p,\lambda}}<\infty.    
\end{align}

For any interval $B\subset\R_+$, consider the quantity
\begin{align}\label{Ap quantity}
\bigg(\frac{1}{\nu_\lambda (B)}\int_Bt^{p}w(t)e^{pRe(z)b}dt\bigg)\cdot\bigg(\frac{1}{\nu_\lambda (B)}\int_B t^{2\lambda p^{'} }w(t)^{1-p^{'}}e^{-p^{'}Re(z)b}dt\bigg)^{p-1}.
\end{align}

\bigskip

For the first term,
\begin{align*}
 &\frac{1}{\nu_\lambda (B)}\int_Bt^{p}w(t)e^{pRe(z)b}dt\\
 &=\frac{1}{\nu_\lambda (B)}\int_B\frac{t^p w(t)}{\nu_\lambda(t)}e^{p Re(z)b}d\nu_\lambda(t)\\
 &\leq \left(\frac{1}{\nu_\lambda (B)}\int_B\left(\frac{t^p w(t)}{\nu_\lambda(t)}\right)^{1+\varepsilon_1}d\nu_\lambda(t)\right)^{\frac{1}{1+\varepsilon_1}}\cdot\left(\frac{1}{\nu_\lambda (B)}\int_Be^{(1+\frac{1}{\varepsilon_1})pRe(z)b}d\nu_\lambda(t)\right)^{\frac{\varepsilon_1}{1+\varepsilon_1}}\\
 &\lesssim \left(\frac{1}{\nu_\lambda (B)}\int_B t^p w(t)dt\right)\cdot\left(\frac{1}{\nu_\lambda (B)}\int_Be^{(1+\frac{1}{\varepsilon_1})pRe(z)b}d\nu_\lambda(t)\right)^{\frac{\varepsilon_1}{1+\varepsilon_1}},
\end{align*}
where the second inequality holds by the reverse type inequality.

\bigskip

For the second term, we use the dual factor to argue. Note that 
$$w\in A_{p,\lambda} \Longleftrightarrow t^{(2\lambda-1)p'}w^{-1\over p-1}\in A_{p',\lambda}.
$$
So,
\begin{align*}
 \frac{1}{\nu_\lambda (B)}\int_Bt^{2\lambda p^{'}}w^{1-p^{'}}(t)e^{-p^{'}Re(z)b}dt
 =\frac{1}{\nu_\lambda (B)}\int_B
 \frac{t^{2\lambda p^{'}} w^{1-p^{'}}(t)}{\nu_\lambda(t)}e^{-p^{'} Re(z)b}d\nu_\lambda(t),
 \end{align*}
 applying H\"older's inequality and the reverse-type inequality, we have
 \begin{align*}
 &\leq \left(\frac{1}{\nu_\lambda (B)}\int_B\left(\frac{t^{2\lambda p^{'} } w^{1-p^{'} }(t)}{\nu_\lambda(t)}\right)^{1+\delta}d\nu_\lambda(t)\right)^{\frac{1}{1+\delta}}\left(\frac{1}{\nu_\lambda (B)}\int_Be^{-(1+\frac{1}{\delta})p^{'}Re(z)b}d\nu_\lambda(t)\right)^{\frac{\delta}{1+\delta}}\\
 &\lesssim \left(\frac{1}{\nu_\lambda (B)}\int_B t^{2\lambda p^{'}} w^{1-p^{'} }(t)dt\right)\cdot\left(\frac{1}{\nu_\lambda (B)}\int_Be^{-(1+\frac{1}{\delta})p^{'} Re(z)b}d\nu_\lambda(t)\right)^{\frac{\delta}{1+\delta}}.
\end{align*}

\medskip
Note that the constant $\varepsilon_1$ and $\delta$ can be chosen to be equal.
In conclusion, the quantity \eqref{Ap quantity} is bounded by 
\begin{align}\label{weight estimate 8.2}
&\left(\frac{1}{\nu_\lambda (B)}\int_B t^p w(t)dt\right)
\left(\frac{1}{\nu_\lambda (B)}\int_B t^{2\lambda p^{'}} w^{1-p^{'} }(t)dt\right)^{p-1}\\
&\qquad\times \left(\frac{1}{\nu_\lambda (B)}\int_Be^{(1+\frac{1}{\varepsilon_1})p Re(z)b}d\nu_\lambda(t)\right)^{\frac{\varepsilon_1}{1+\varepsilon_1}}\cdot\left(\frac{1}{\nu_\lambda (B)}\int_Be^{-(1+\frac{1}{\varepsilon_1})p^{'} Re(z)b}d\nu_\lambda(t)\right)^{\frac{\varepsilon_1}{1+\varepsilon_1} (p-1)}\nonumber\\
&\leq [w]_{A_{p,\lambda}}\cdot\left(\frac{1}{\nu_\lambda (B)}\int_Be^{(1+\frac{1}{\varepsilon_1})p Re(z)b}d\nu_\lambda(t)\right)^{\frac{\varepsilon_1}{1+\varepsilon_1}}\cdot\left(\frac{1}{\nu_\lambda (B)}\int_Be^{-(1+\frac{1}{\varepsilon_1})p^{'} Re(z)b}d\nu_\lambda(t)\right)^{\frac{\varepsilon_1}{1+\varepsilon_1} (p-1)}.\nonumber
%&\lesssim [w]_{A_{p,\lambda}},
\end{align}
%where the last inequality follows from (2) of Corollary \ref{cor exp}.

%\begin{align*}
%&[e^{pRe(z)b}w]_{A_{p,\lambda}}\\
%&\lesssim [w]_{A_{p,\lambda}}\cdot\bigg(\frac{1}{\nu_\lambda (B)}\int_Be^{(1+\frac{1}{\varepsilon_1})p Re(z)b}d\nu_\lambda(t)\bigg)^{\frac{\varepsilon_1}{1+\varepsilon_1}}\cdot\bigg(\frac{1}{\nu_\lambda (B)}\int_Be^{-(1+\frac{1}{\delta})p^{'} Re(z)b}d\nu_\lambda(t)\bigg)^{\frac{\delta}{1+\delta} (p-1)}.
%\end{align*}

Hence, if we choose $\varepsilon_1>0$ such that
$$
\bigg|(1+\frac{1}{\varepsilon_1})\cdot p\cdot Re(z)\bigg|\leq\frac{A}{\|b\|_{{\rm BMO}_\lambda}}\cdot\min\{1,p-1\},
$$
then together with (2) of Corollary \ref{cor exp}, the estimate \eqref{weight estimate 8.2} implies that 
$$
[e^{p Re(z)b}w]_{A_{p,\lambda}}\lesssim[w]_{A_{p,\lambda}}\cdot \big(C(A,\varepsilon,\varepsilon_1)\big)^{p},
$$
where $C(A,\varepsilon,\varepsilon_1)>1.$ Thus, we see that \eqref{exp zb weight} holds.

\bigskip
Now we return to the estimate \eqref{commutator upper bound}.
If we let $$\varepsilon=\frac{A}{p(1+\frac{1}{\varepsilon_1})\|b\|_{{\rm BMO}_\lambda}}\cdot\min\{1,p-1\} \implies    \left|(1+\frac{1}{\varepsilon_1})p\, Re(z)\right|\leq\left|(1+\frac{1}{\varepsilon_1})p\varepsilon\right|=\frac{A}{\|b\|_{{\rm BMO}_\lambda}}\cdot\min\{1,p-1\},
$$
which implies that
\begin{align*}
 \|[b,R_\lambda]f\|_{L^p(R_+,w)}&\leq \frac{1}{2\pi \varepsilon^2}\int_{D_\varepsilon}\|T_z(f)\|_{L^p(R_+,w)}dz\\
&\lesssim  \frac{1}{\varepsilon}[w]_{A_{p,\lambda}}\|f\|_{L^p(R_+,w)}.
\end{align*}
That is 
$$
\|[b,R_\lambda]\|_{L^p(R_+,w)\to L^p(R_+,w)} \lesssim [w]_{A_{p,\lambda}}\|b\|_{{\rm BMO}_\lambda}.
$$
The proof is complete.
\end{proof}

\bigskip

\bigskip

In the last part of our paper, we prove the weighted lower bound of the commutator. In Section 7.5 of \cite{duong2021two}, the authors proved the following argument holds for the kernel $K(x,y)$ of the operator $R_\lambda$:

There exist positive constants $3\le A_1\le A_2$ such that for any interval $B:=B(x_0, r)\subset \R_+$, there exists another interval $\widetilde B:=B(y_0, r)\subset \R_+$
such that $A_1 r\le |x_0- y_0|\le A_2 r$, %$\frac12B\subset \widehat B\subset B$, %$\mu(B)\sim \mu(\widetilde B)$
and  for all $(x,y)\in ( B\times \widetilde{B})$, $K(x, y)$ does not change sign and
\begin{equation}\label{e-assump cz ker low bdd}
|K(x, y)|\gs \frac1{m_\lambda(\tilde B)},
\end{equation}
where $dm_\lambda:=x^{2\lambda}dx$ with $dx$ the Lebesgue measure on $\R_+$.

\begin{defn}\label{d-median value}
  By a median value of a real-valued measurable function $f$ over an interval $B\subset\R_+$, we mean a possibly non-unique, real number $\alpha_B(f)$ such that
$$m_\lambda(\{x\in B: f(x)>\alpha_B(f)\})\leq \frac12m_\lambda(B)\,\, \, \mbox{and}\,\,\, m_\lambda(\{x\in B: f(x)<\alpha_B(f)\})\leq \frac12m_\lambda(B). $$
\end{defn}
It is known that for a given function $f$ and interval $B$, the median value exists and may not be unique; see, for example, \cite{journe1983}. The following decomposition follows from the result in Section 5 of \cite{duong2021two}.

\begin{lem}\label{l-bmo decomp low bdd}
  Let $b$ be a real-valued measurable function.
For any ball $B$, let $\widetilde B$ be as in \eqref{e-assump cz ker low bdd}. Then
there exist measurable sets $E_1, E_2\subset  B$, and $F_1, F_2\subset \widetilde B$, such that
\begin{itemize}
  \item[{\rm (i)}] $ B=E_1\cup E_2$,  $\widetilde B=F_1\cup F_2$ and $m_\lambda(F_i)\ge\frac{1}{2}m_\lambda(\widetilde B)$, $i=1,2$;
  \item[{\rm (ii)}]  $b(x)-b(y)$ does not change sign for all $(x, y)$ in $E_i\times F_i$, $i=1,2$;
  \item[{\rm (iii)}]  $|b(x)-\alpha_{\widetilde B}(b)|\leq |b(x)-b(y)|$ for all $(x, y)$ in $E_i\times F_i$, $i=1,2$.
\end{itemize}
\end{lem}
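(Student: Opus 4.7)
The plan is to build the four sets directly from the median value $\alpha := \alpha_{\widetilde B}(b)$, using only its defining threshold property from Definition \ref{d-median value}. First I would set
\[
F_1 := \{y \in \widetilde B : b(y) \leq \alpha\}, \qquad F_2 := \{y \in \widetilde B : b(y) \geq \alpha\}.
\]
By the definition of the median, the complements $\widetilde B \setminus F_1 = \{y \in \widetilde B : b(y) > \alpha\}$ and $\widetilde B \setminus F_2 = \{y \in \widetilde B : b(y) < \alpha\}$ each have $m_\lambda$-measure at most $\tfrac{1}{2}m_\lambda(\widetilde B)$, which gives $m_\lambda(F_i) \geq \tfrac{1}{2}m_\lambda(\widetilde B)$ for $i=1,2$, yielding (i). The covering $\widetilde B = F_1 \cup F_2$ is automatic.

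Next I would split $B$ by the sign of $b(x) - \alpha$:
\[
E_1 := \{x \in B : b(x) \geq \alpha\}, \qquad E_2 := \{x \in B : b(x) \leq \alpha\},
\]
so that $B = E_1 \cup E_2$. The pairing is arranged so $E_1$ is matched with $F_1$ and $E_2$ with $F_2$. For $(x,y) \in E_1 \times F_1$ we have $b(x) \geq \alpha \geq b(y)$, hence $b(x) - b(y) \geq 0$; symmetrically $b(x) - b(y) \leq 0$ on $E_2 \times F_2$. This proves (ii). For (iii), on $E_1 \times F_1$ one computes
\[
|b(x) - \alpha| \;=\; b(x) - \alpha \;\leq\; b(x) - b(y) \;=\; |b(x) - b(y)|,
\]
because $\alpha \geq b(y)$ on $F_1$, and the estimate on $E_2 \times F_2$ is identical after reversing signs.

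I do not anticipate a real obstacle: the argument is combinatorial and uses only the existence of a median value, which is assured for every real-valued measurable $b$ and doubling measure $m_\lambda$. The only delicate point worth flagging is that the pairs $\{F_1, F_2\}$ and $\{E_1, E_2\}$ need not be disjoint — they overlap on $\{b = \alpha\}$ — but the statement only asks for a covering, so this overlap is harmless. Measurability of all four sets is immediate from the measurability of $b$.
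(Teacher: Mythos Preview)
Your construction is correct and is precisely the standard decomposition based on the median value; the paper itself does not supply a proof but simply cites Section~5 of \cite{duong2021two}, where exactly this argument appears. Your observation that the overlap on $\{b=\alpha\}$ is harmless since only a covering is required is the one point worth making explicit, and you have done so.
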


\medskip

We now prove the lower bound for $[b,R_\lambda]$, that is 
$$
w\in {A_{p,\lambda}}\implies \|b\|_{{\rm BMO}_\lambda}\lesssim\|[b,R_\lambda]\|_{L^p(\R_+,w)\to L^p(\R_+,w)}
$$

\begin{proof}
For given $b\in L^1_{\rm loc}(\R_+,\nu_\lambda)$ and for any ball $B$, 
we will show that for any ball $B$,
\begin{align}\label{e-mean osci weigh upp bdd}
 \sup_{B\subset \mathbb R_+} {1\over \nu_\lambda(B)}\int_B|b(t)-b_{B,\lambda}|d\nu_\lambda(t) \ls \|[b,R_\lambda]\|_{L^p(\R_+,w)\to L^p(\R_+,w)}.
\end{align}

Without loss of generality, 
let $B$ be an interval in $\R_+$. We apply the conditions \eqref{e-assump cz ker low bdd} and Lemma \ref{l-bmo decomp low bdd}
to get sets $E_i, F_i,\,i=1,2$.
%satisfying \eqref{e-assump cz ker low bdd} and Lemma \ref{l-bmo decomp low bdd}.

On the one hand, by Lemma \ref{l-bmo decomp low bdd} and \eqref{e-assump cz ker low bdd}, we have that for $f_i:=\chi_{F_i}$, $i=1,2$,
\begin{align*}
\frac1{\nu_\lambda(B)}\sum_{i=1}^2\int_{ B} |[b,R_\lambda]f_i(x) |\,d\nu_\lambda(x)
&\ge\frac1{\nu_\lambda(B)}\sum_{i=1}^2\int_{E_i} |[b,R_\lambda]f_i(x) |\,d\nu_\lambda(x)\\
&=\frac1{\nu_\lambda(B)}\sum_{i=1}^2\int_{E_i}\int_{F_i}|b(x)-b(y)||K(x,y)|\,dm_\lambda(y)\,d\nu_\lambda(x)\\
&\gs\frac1{\nu_\lambda(B)}\sum_{i=1}^2\int_{E_i}\int_{F_i}\frac{|b(x)-\alpha_{\widetilde B}(b)|}{m_\lambda(\tilde B)}\,dm_\lambda(y)\,d\nu_\lambda(x)\\
&\sim\frac1{\nu_\lambda(B)}\sum_{i=1}^2\int_{E_i}\frac{|b(x)-\alpha_{\widetilde B}(b)|}{m_\lambda(\tilde B)}\,d\nu_\lambda(x)\cdot m_\lambda(F_i)\\
&\gs \frac1{\nu_\lambda(B)}\int_B\lf|b(x)-\alpha_{\widetilde B}(b)\r|\,d\nu_\lambda(x)\\
&\gs  {1\over \nu_\lambda(B)}\int_B|b(x)-b_{B,\lambda}|d\nu_\lambda(x).
\end{align*}
%where in the last-to-second inequality, we use the fact that $B\subset \widehat B$.

On the other hand, from H\"older's inequality and the boundedness of $[b,R_\lambda]$, we deduce that
\begin{align*}
&\frac1{\nu_\lambda(B)}\sum_{i=1}^2\int_{ B} |[b,R_\lambda]f_i(x)|\,d\nu_\lambda(x)\\
&=\frac1{\nu_\lambda(B)}\sum_{i=1}^2\int_{ B} |[b,R_\lambda]f_i(x)| w(x)^{1\over p} \, w(x)^{-{1\over p}}\ x^{2\lambda+1}dx\\
&\leq \frac{1}{\nu_\lambda (B)}\sum^2_{i=1} \left(\int_B |[b,R_\lambda]f_i(x)|^p w(x)dx\right)^{\frac{1}{p}}\cdot\left(\int_B x^{(2\lambda+1)p^{'}}\cdot w^{\frac{-1}{p-1}}(x)dx\right)^{\frac{1}{p^{'}}}\\
&\lesssim\frac{1}{\nu_\lambda (B)}\sum^2_{i=1}\|[b,R_\lambda]\|_{L^p(\R_+,w)\to L^p(\R_+,w)}\cdot w(F_i)^{\frac{1}{p}}\cdot\left(\int_Bx^{(2\lambda+1)p^{'}}w^{\frac{-1}{p-1}}(x)dx\right)^{\frac{1}{p^{'}}}\\
&\lesssim \frac{1}{\nu_\lambda (B)}\|[b,R_\lambda]\|_{L^p(\R_+,w)\to L^p(\R_+,w)}\cdot w(\widetilde B)^{\frac{1}{p}}\cdot\left(\int_Bx^{(2\lambda+1)p^{'}}w^{\frac{-1}{p-1}}(x)dx\right)^{\frac{1}{p^{'}}}.
\end{align*}

 Denote $B=(a,b)$ and $ \widetilde{B}=(a',b')$ and notice that from the construction of $\widetilde{B}$ in \cite{duong2021two}, $\widetilde{B}$ can always be chosen on the right hand side of $B$. Hence, let $B^*=(a,b')$, we have
\begin{align*}
&\frac{1}{\nu_\lambda (B)}\|[b,R_\lambda]\|_{L^p(\R_+,w)\to L^p(\R_+,w)}\cdot w(\widetilde B)^{\frac{1}{p}}\cdot\left(\int_Bx^{(2\lambda+1)p^{'}}w^{\frac{-1}{p-1}}(x)dx\right)^{\frac{1}{p^{'}}}\\
& \leq \frac{1}{\nu_\lambda (B)}\|[b,R_\lambda]\|_{L^p(\R_+,w)\to L^p(\R_+,w)}\cdot\left(\int_{\widetilde{B}} b^p w(t)dt\right)^{\frac{1}{p}}\cdot\left(\int_B x^{2\lambda p^{'}}w^{\frac{-1}{p-1}}(x)dx\right)^{\frac{1}{p^{'}}}\\
&= \frac{1}{\nu_\lambda (B)}\|[b,R_\lambda]\|_{L^p(\R_+,w)\to L^p(\R_+,w)}\cdot\left(\int_{\widetilde{B}} \frac{b^p}{t^p}\cdot t^p w(t)dt\right)^{\frac{1}{p}}\cdot\left(\int_B x^{2\lambda p^{'}}w^{\frac{-1}{p-1}}(x)dx\right)^{\frac{1}{p^{'}}}\\
&\leq \frac{1}{\nu_\lambda (B)}\|[b,R_\lambda]\|_{L^p(\R_+,w)\to L^p(\R_+,w)}\cdot\left(\int_{B^*} t^p w(t)dt\right)^{\frac{1}{p}}\cdot\left(\int_{B^*} x^{2\lambda p^{'}}w^{\frac{-1}{p-1}}(x)dx\right)^{\frac{1}{p^{'}}}\\
&\leq \frac{\nu_\lambda (B^*)}{\nu_\lambda (B)}\|[b,R_\lambda]\|_{L^p(\R_+,w)\to L^p(\R_+,w)}\cdot [w]_{A_{p,\lambda}}.
\end{align*}
Since $\nu_\lambda$ is doubling and the sizes of $B$ and $B^*$ are comparable and their distance is also comparable concerning their radius, we then have
$$
\|b\|_{{\rm BMO}_\lambda}\lesssim [w]_{A_{p,\lambda}} \|[b,R_\lambda]\|_{L^p(\R_+,w)\to L^p(\R_+,w)}.
$$
Thus, \eqref{e-mean osci weigh upp bdd} holds and the proof of the Theorem is complete.
\end{proof}

\bigskip

\begin{cor}
Suppose $b\in L^1_{loc}(\R_+)$ and let $1<p<\infty$. Then if $b\in {\rm BMO}_{\lambda}(\R_+)$,
$$
w\in {A_{p,\lambda}}\implies \|[b,R_\lambda]\|_{L^p(\R_+,w)\to L^p(\R_+,w)}\sim \|b\|_{{\rm BMO}_\lambda}.
$$
\end{cor}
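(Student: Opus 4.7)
The plan is to observe that this corollary is nothing more than a packaging of the two-sided estimate already established in Theorem \ref{thmcom}, so the proof reduces to invoking the two directions and combining them. First I would fix $b \in {\rm BMO}_\lambda(\R_+) \subset L^1_{\rm loc}(\R_+, d\nu_\lambda)$, fix $p \in (1,\infty)$, and fix $w \in A_{p,\lambda}$; then the hypotheses of both halves of Theorem \ref{thmcom} are in place.

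For the upper bound, I would apply the first implication of Theorem \ref{thmcom}, which was proved via the Cauchy integral trick applied to the generating operator $T_z(f) = e^{zb} R_\lambda(e^{-zb} f)$ together with Corollary \ref{cor exp} (the John--Nirenberg self-improvement of ${\rm BMO}_\lambda$) to control $[e^{p\,{\rm Re}(z)b} w]_{A_{p,\lambda}}$. This yields
\[
\|[b, R_\lambda]\|_{L^p(\R_+,w) \to L^p(\R_+,w)} \lesssim \|b\|_{{\rm BMO}_\lambda(\R_+)}.
\]

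For the lower bound, I would apply the converse implication of Theorem \ref{thmcom}, whose proof rests on the pointwise lower bound \eqref{e-assump cz ker low bdd} for the kernel $K(x,y)$ of $R_\lambda$ on a suitably displaced interval $\widetilde{B}$, the median-value decomposition of Lemma \ref{l-bmo decomp low bdd}, and then H\"older's inequality combined with the $A_{p,\lambda}$ condition to pass from the integral of $|[b,R_\lambda]f_i|$ against $d\nu_\lambda$ to its weighted $L^p$ norm. This produces
\[
\|b\|_{{\rm BMO}_\lambda(\R_+)} \lesssim [w]_{A_{p,\lambda}}\, \|[b, R_\lambda]\|_{L^p(\R_+,w) \to L^p(\R_+,w)},
\]
where the constant depends only on $p$, $\lambda$, and $[w]_{A_{p,\lambda}}$.

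Combining the two displayed inequalities gives the equivalence $\|[b,R_\lambda]\|_{L^p(\R_+,w) \to L^p(\R_+,w)} \sim \|b\|_{{\rm BMO}_\lambda(\R_+)}$ with implicit constants depending on $p$, $\lambda$, and $[w]_{A_{p,\lambda}}$. There is no genuine obstacle here, since both directions have already been established; the only bookkeeping to watch is that the implicit constants in the equivalence absorb the factor $[w]_{A_{p,\lambda}}$ coming from the lower bound and the factor $C(A,\varepsilon,\varepsilon_1)$ coming from the upper bound, so the symbol $\sim$ is understood modulo constants depending on the weight and on $p$, $\lambda$.
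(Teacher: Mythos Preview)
Your proposal is correct and matches the paper's approach exactly: the corollary is an immediate consequence of the two directions of Theorem~\ref{thmcom}, and the paper simply states it without further proof. Your recap of the ingredients (Cauchy integral trick for the upper bound, median-value decomposition plus the $A_{p,\lambda}$ condition for the lower bound) is accurate and nothing more is needed.
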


\bigskip
\bigskip
\bigskip
\bigskip

{\bf Acknowledgments:} J. Li is supported by ARC DP 220100285. C.-W. Liang and C.-Y. Shen are supported in part by NSTC through grant 111-2115-M-002-010-MY5.  Fred Yu-Hsiang Lin is supported by the
DFG under Germany’s Excellence Strategy - EXC-2047/1 - 390685813 and
DAAD Graduate School Scholarship Programme - 57572629.

\bigskip

\printbibliography[heading=bibintoc,title={References}]

(J Li) Department of Mathematics, Macquarie University, NSW, 2109, Australia.\\ 
{\it E-mail}: \texttt{ji.li@mq.edu.au}\\

(C-W Liang) Department of Mathematics, National Taiwan University, Taiwan.
\\ 
{\it E-mail}: \texttt{d10221001@ntu.edu.tw}\\

(F Y-H Lin) Department of Mathematics, University of Bonn, Germany.
 \\ {\it E-mail}: \texttt{fredlin@math.uni-bonn.de}\\
 
(C-Y Shen) Department of Mathematics, National Taiwan University, Taiwan. \\ {\it E-mail}: \texttt{cyshen@math.ntu.edu.tw}

\vspace{0.3cm}

\end{document}